\documentclass[oneside,reqno]{amsart}

\usepackage{amsfonts}
\usepackage{amsmath}
\usepackage{amssymb}
\usepackage[english]{babel}
\usepackage{amsthm}
\usepackage{graphicx}
\usepackage{color}

\newtheorem {thm}{Theorem}[section]
\newtheorem {lem}[thm]{Lemma}

\newtheorem {cor}[thm]{Corollary}
\newtheorem{conj}[thm]{Conjecture}

\theoremstyle{definition}
\newtheorem {defn}[thm]{Definition}

\theoremstyle{remark}
\newtheorem*{rem*}{Remark}

\DeclareMathOperator{\aff}{aff}
\DeclareMathOperator{\lin}{lin}
\DeclareMathOperator{\proj}{proj}

\DeclareMathOperator{\fan}{Fan}
\DeclareMathOperator{\inter}{rel\,int}
\DeclareMathOperator{\conv}{conv}
\newcommand{\pcap}{\mathrm{Cap}}

\newcommand{\belt}{\mathrm{Belt}}

\title{Voronoi's Conjecture for extensions of Voronoi parallelohedra}

\author{Alexander Magazinov}
\address{Steklov Mathematical Institute of the Russian Academy of Sciences, 8 Gubkina street, Moscow 119991, Russia \\
Yaroslavl State University, 14 Sovetskaya street, Yaroslavl 150000, Russia.}
\email{magazinov-al@yandex.ru}

\begin{document}
\sloppy

\begin{abstract}

Let $I$ be a segment in the $d$-dimensional Euclidean space $\mathbb E^d$. Let $P$ and $P+I$ be parallelohedra in $\mathbb E^d$, where ``+''
denotes the Minkowski sum. We prove that Voronoi's Conjecture holds for $P+I$, i.e. $P+I$ is a Voronoi parallelohedron for some 
Euclidean metric in $\mathbb E^d$, if Voronoi's Conjecture holds for $P$.\\
\noindent {\bf Keywords:} tiling, parallelohedron, Voronoi's Conjecture, free segment, reducible parallelohedron.
\end{abstract}

\maketitle

\section{Introduction}

This paper focuses on {\it parallelohedra}, which are, by definition, convex polytopes that tile Euclidean space in a face-to-face way. The notion of
parallelohedra was introduced by E.~S.~Fedorov~\cite{Fed1885} in 1885.

\subsection{Notation and general properties}\hspace*{\fill}

In the paper we will use the notation $\lin L$ for the linear space associated with the affine space $L$. If $\mathcal S$ is a set of
points, then $\aff \mathcal S$ denotes the affine hull of $\mathcal S$, i.e. the minimal affine space containing all its points. If some
elements of $\mathcal S$ are vectors and some are points, then the vectors are identified with the endpoints of the equal radius-vectors. Then $\aff \mathcal S$
is the affine hull of the resulting point set. 

The particularly important usage of the notation is $\lin \aff \mathcal S$ when $\mathcal S$ is a
set of vectors. One can check that $\lin \aff \mathcal S$ is the space of all linear combinations of vectors of $\mathcal S$ with sum of coefficients
equal to 0.

We will also need the notation for linear projections. In this paper $\proj_p$ denotes the projection {\it along} the linear subspace $p$ onto some
complementary affine subspace $p'$. If needed, $p'$ is specified separately, otherwise it is chosen arbitrarily. 

For the sake of brevity, we will also write $\proj_M$ instead of $\proj_{\lin \aff M}$ when $M$ is a polytope (in this paper the common cases are 
that $M$ is a segment or a face of a parallelohedron).

We recall some general properties of parallelohedra. 

\begin{enumerate}
	\item[1.] If $T(P)$ is a face-to-face tiling of $\mathbb E^d$ by translates of $P$, then
	$$\Lambda(P) = \{ \mathbf t: P+\mathbf t \in T(P) \}$$
	is a $d$-dimensional lattice.
	\item[2.] $P$ has a center of symmetry.
	\item[3.] Each facet of $P$ (i.e. a $(d-1)$-dimensional face of $P$) has a center of symmetry.
\end{enumerate}

\begin{defn}
Consider an arbitrary face $F\subset P$ of dimension $d-2$. The set of all facets of $P$ parallel to $F$ is called
the {\it belt} of $P$ determined by $F$ and denoted by $\belt(F)$. Each facet of $\belt(F)$ contains two $(d-2)$-faces parallel and congruent to $F$, and
each $(d-2)$-dimensional face of $P$ parallel to $F$ is shared by two facets of $\belt(F)$.
\end{defn}

\begin{enumerate}	
	\item[4.] For every $(d-2)$-dimensional face $F\subset P$ the belt $\belt(F)$ consists of 4 or 6 facets. It means that $\proj_{F}(P)$
	is a parallelogram or a centrally symmetric hexagon.
\end{enumerate}

Properties 1 -- 4 were established in~\cite{Min1897}. B.~Venkov~\cite{Ven1954} proved that every convex polytope satisfying conditions 2, 3 and 4
({\it Minkowski--Venkov conditions}) is a parallelohedron.

\begin{defn}
For simplicity, assume $\mathbf 0 \in \Lambda(P)$, i.e. $P\in T(P)$. A {\it standard face} of $P$ is a face that can be represented as $P\cap P'$, 
where $P'\in T(P)$.
\end{defn}

\begin{enumerate}	
	\item[5.] Let $F = P \cap P'$ be a standard face of $P$, where $P' = P + \mathbf t$, $\mathbf t\in \Lambda(P)$. Then the point
	$\mathbf t/2$ is the center of symmetry of $F$. The vector $\mathbf t$ will be called {\it the standard vector} of $F$ and denoted by
	$\mathbf s(F)$.
\end{enumerate}

The central symmetry of such faces is established in \cite{Hor1999} by \'A.~Horv\'ath.
N.~Dolbilin introduced the term ``standard face''. His paper~\cite{Dol2009} establishes several useful properties of standard faces.

There are some important particular cases of standard faces. If $F$ is a facet of $P$, then $F$ is necessarily standard. Then the notion of a standard
vector $\mathbf s(F)$ coincides with the notion of a {\it facet vector} of $F$~\cite{Min1897}. If $F$ is a $(d-2)$-dimensional face of $P$, then 
$F$ is standard iff $\belt(F)$ consists of 4 facets.

\subsection{Voronoi's Conjecture}\hspace*{\fill}

The following Conjecture~\ref{voronoi} has been posed in 1909, and it has not been proved or disproved so far in full generality.

\begin{conj}[{G.~Voronoi, \cite{Vor1908}}]\label{voronoi}
Every $d$-dimensional parallelohedron $P$ is a Dirichlet--Voronoi domain for $\Lambda(P)$ with respect to some Euclidean metric in $\mathbb E^d$.
\end{conj}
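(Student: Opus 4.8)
The plan is to recast Voronoi's Conjecture as the problem of constructing a \emph{canonical scaling} on the tiling $T(P)$, and then to build one by propagation through the belt structure of $P$. Recall Voronoi's criterion: $P$ is affinely a Dirichlet--Voronoi domain for $\Lambda(P)$ if and only if $T(P)$ carries such a scaling, namely a $\Lambda(P)$-invariant assignment of a positive number $n(F)$ to each facet $F$ that is compatible, around every belt, with a single quadratic form making each facet the perpendicular bisector of its facet vector. The only belts imposing conditions are the $6$-belts $\belt(G)$, those at the non-standard $(d-2)$-faces $G$; around such a $G$ exactly three cells of $T(P)$ meet, and their three facet vectors close up,
\[ \mathbf s_1 + \mathbf s_2 + \mathbf s_3 = \mathbf 0, \]
whereas a $4$-belt, where $\proj_G(P)$ is a parallelogram by Property~4, imposes no restriction. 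Thus the conjecture is equivalent to the assertion that the belt conditions of $T(P)$ always admit a simultaneous positive solution.

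First I would establish local solvability. A single $6$-belt constrains only the three weights on its facets, and its condition cuts out a non-empty open cone of admissible positive weights; in particular the ratios among $n_1, n_2, n_3$ that it allows form an interval, and they are generically not all equal. The idea is then to fix $n$ on one chosen facet and transport it across $T(P)$: any two facets are joined by a chain of $6$-belts, along which the belt conditions propagate a well-defined positive multiplier. Constructing a global $n$ is therefore the same as showing that this multiplier is path-independent, i.e.\ that the monodromy of the belt conditions around every closed chain is trivial. Equivalently, one must show that a certain class in the first cohomology of the two-dimensional cell complex formed by the non-standard $(d-2)$-faces and their belts vanishes, and that positivity survives the propagation.

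The hard part --- and the reason the conjecture remains open in full generality --- is exactly this global consistency. I would attack it inductively by reducing to \emph{irreducible} parallelohedra. If $P$ carries a free segment, so that $P = Q + I$, then the belts created by the direction of $I$ are $4$-belts and carry no condition, and a canonical scaling for $Q$ extends to one for $P$ by prescribing its value along the free direction; iterating this, together with the analogous reduction for Minkowski summands, removes all degenerate directions and leaves an irreducible parallelohedron for which the conjecture alone must be proved. The reduction across a single free segment is precisely the content established in the present paper.

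For irreducible $P$ the remaining task is to show that the cycle space of the belt complex is generated by the \emph{elementary} cycles supported inside a single cell of $T(P)$. On such a cycle the belt conditions are automatically consistent, since they descend from the honest facet vectors of one parallelohedron, which already close up. Proving this generation statement uniformly across all dimensions is the central obstacle: it is a statement about the combinatorial topology of the contact structure of the tiling, and it is exactly what the known partial results secure in special situations --- Voronoi's own theorem for primitive tilings, the later theorems under restricted configurations of $6$-belts, the complete verification for $d \le 5$, and the recent simple-connectivity criteria for the dual surface. A proof in full generality would have to control this combinatorial topology for every parallelohedron simultaneously.
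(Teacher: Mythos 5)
The statement you set out to prove is Conjecture~\ref{voronoi} itself, which the paper explicitly leaves open: the paper proves it only for the special class of extensions $P+I$ of Voronoi parallelohedra (Theorem~\ref{thm:voronoi_for_sum}, together with Theorems~\ref{thm:cross} and \ref{thm:cross_reduction}), so there is no proof in the paper to compare yours against. Your text, by your own admission, is not a proof but a reduction to an unproved step: the ``global consistency'' claim --- triviality of the monodromy of the belt conditions, equivalently that the cycle space of the belt complex is generated by the elementary cycles supported in a single tile --- is left entirely open, and it is not a technical remainder but the whole content of the conjecture. The canonical-scaling reformulation you begin with is indeed a correct and classical equivalence (it underlies Voronoi's primitive case, Zhitomirskii's theorem and Ordine's work), so everything up to that point is a known reduction, and everything after it is a restatement of the open problem. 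That is a genuine gap, and it cannot be patched within the methods you describe.

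Two concrete defects in your intermediate reductions are worth naming as well. First, you conflate ``$P$ carries a free segment'' with ``$P=Q+I$'': by the paper's definitions, $I$ is free for $P$ when $P+I$ is a parallelohedron, which does not make $I$ a Minkowski summand of $P$; and conversely, when $P=Q+I$, the summand $Q$ need not be a parallelohedron, so the iteration ``strip off free directions until an irreducible core remains'' is not available in general (it does work for zonotopes, which is exactly how Erdahl's theorem follows from Theorem~\ref{thm:voronoi_for_sum}, as the paper notes). Even granting all such reductions, what remains is not a tame residual class: irreducible parallelohedra without free segments are the generic case, and the conjecture for them is untouched. Second, your local description of the conditions is imprecise: at a $6$-belt the three lattice vectors do close up, but the scaling condition constrains the scaled \emph{unit normals}, not the facet vectors; and $4$-belts are not condition-free --- they force pairwise equality of the scalings on the parallel facets across the standard $(d-2)$-face --- they are merely always locally solvable. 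None of this affects the verdict, since the fatal issue is the unproved global step, but a corrected write-up should state the local conditions accurately.
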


In the same paper~\cite{Vor1908} Voronoi proved his conjecture for primitive parallelohedra, i.e. for the case of every vertex of the tiling $T(P)$ being shared by exactly $d+1$ translates of $P$. Zhitomirskii~\cite{Zhi1929} proved it for $(d-2)$-primitive parallelohedra, i.e. for such parallelohedra that every $(d-2)$-dimensional face of $T(P)$ belongs to exactly $3$ tiles. The property of $P$ to be $(d-2)$-primitive is equivalent to the condition that
$P$ has no four-belts.

A.~Ordine \cite{Ord2005} proved the Voronoi's Conjecture for so called $3$-irreducible parallelohedra. Up to the moment, no improvements of this result are known.

In~\cite{Erd1999} R.~Erdahl has shown that Voronoi's Conjecture is true for space-filling zonotopes. The term {\it zonotope} denotes a Minkowski sum of 
several segments. This paper is an attempt to develop the theory of parallelohedra in this direction.

\begin{defn}
If a $d$-parallelohedron $P$ is a Dirichlet-Voronoi domain for some $d$-dimensio\-nal lattice, then $P$ is called a {\it Voronoi parallelohedron}.
\end{defn}

In the present paper we aim to prove Voronoi's Conjecture (Conjecture \ref{voronoi}) for parallelohedra obtained by taking a Minkowski sum $P+I$ of a
$d$-dimendional ($d\geq 4$) Voronoi parallelohedron and a segment $I$. \'A.~Horv\'ath in~\cite{Hor2007} calls $P+I$ an {\it extension} of $P$.

We need to recall several notions concerning lattice Delaunay tilings.

Let $\Lambda$ be a $d$-dimensional lattice in the Euclidean space $\mathbb E^d$. We call a sphere
$$S(\mathbf x, r) = \{\mathbf y \in \mathbb E^d : \|\mathbf y-x\| = r\}$$
{\it empty}, if $\|\mathbf z - \mathbf x\| \geq r$ for every $\mathbf z \in \Lambda$.

If $S(\mathbf x, r)$ is an empty sphere and
$$\dim \aff (S(\mathbf x, r) \cap \Lambda) = d,$$
then the set 
$$\conv (S(\mathbf x, r) \cap \Lambda)$$
is called a {\it lattice Delaunay $d$-cell}. It is known (see, for example,~\cite{Del1937}) that all lattice Delaunay $d$-cells
for a given lattice $\Lambda$ form a face-to-face tiling $D_{\Lambda}$ of $\mathbb E^d$.

Each $k$-face of a Delaunay $d$-cell is affinely equivalent to some Delaunay $k$-cell (see \cite[\S~13.2]{DLa1996} for details). 
Thus, for simplicity, we can call all faces of $D_{\Lambda}$ just Delanay cells.

There is a duality between the Delaunay tiling $D_{\Lambda}$ and the Voronoi tiling $V_{\Lambda}$. Namely, for every face $F$ of $V_{\Lambda}$ there
is a cell $D(F)$ of $D_{\Lambda}$ such that
\begin{enumerate}
	\item If $P$ is a $d$-parallelohedron, $D(P)$ is a center of $P$.
	\item $D(F)\subset D(F')$ iff $F'\subset F$.
\end{enumerate}

Let $F$ be a face of $T(P)$ and let $\dim F = d - k$. Consider a $k$-dimensional plane $p$ that intersects $F$ transversally. In a small neighborhood of $F$ the section of $T(P)$ by $p$ coincides with a complete $k$-dimensional polyhedral fan, which is called the {\it fan of a face $F$} and denoted by $\fan(F)$.
By duality, the combinatorics of $\fan(F)$ is uniquely determined by the combinatorics of $D(F)$ and vice versa.

We will particularly need the classification of Delaunay $k$-cells for $k = 2,3$ (or, equivalently, all possible structures of fans $\fan(F)$ of dimension
2 or 3). There are two possible combinatorial types of two-dimensional fans and five possible combinatorial types of three-dimensional fans. They are shown 
in Figure~\ref{duals}. These types are listed, for example, by B.~Delaunay \cite[\S 8]{Del1929}, who solved a more complicated problem --- to find all possible
combinatorial types of 3-dimensional fans $\fan(F)$ without assumption that $P$ is Voronoi.

\begin{figure}[!t]
\centerline{\includegraphics[height=3cm]{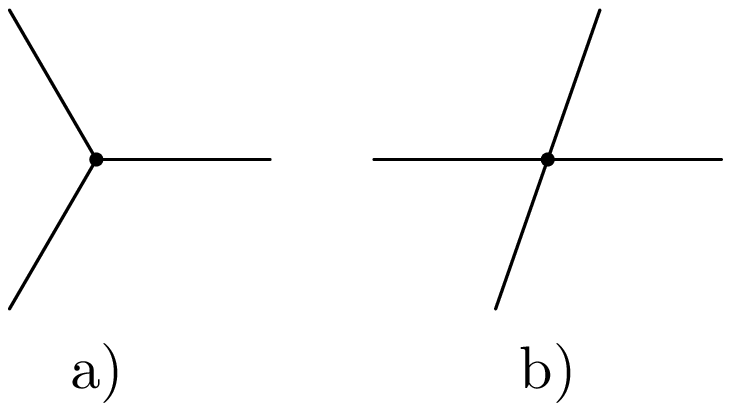}}

\vspace{5mm}
\centerline{\includegraphics[height=3cm]{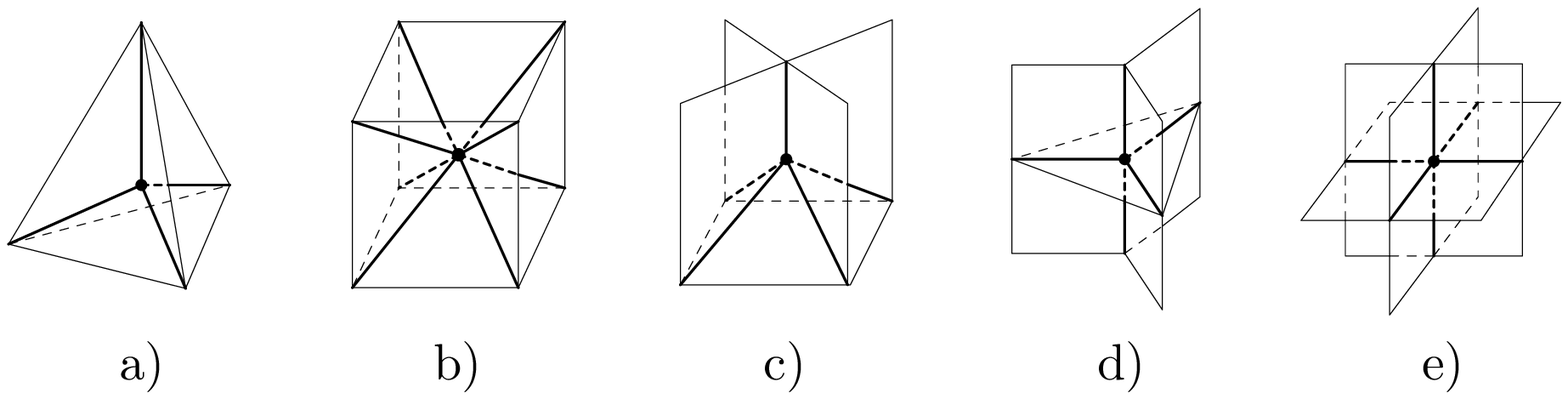}}

\vspace{3mm}
\caption{Fans of $(d-2)$- and $(d-3)$-faces} \label{duals}
\end{figure}

An explicit classification of all affine types of Delaunay $k$-cells exists for $k\leq 6$ [Dutour].

Notice that a $(d-2)$-face $F$ of a parallelohedron $P$ is standard iff it determines a four-belt, or (assuming that $P$ is a Voronoi parallelohedron) iff 
the dual Delaunay 2-cell $D(F)$ is a rectangle.

\subsection{Reducibility of parallelohedra}\hspace*{\fill}

\begin{defn} 
A parallelohedron $P$ is called {\it reducible}, if $P = P_1\oplus P_2$, where $P_1$ and $P_2$ are convex polytopes of smaller dimension. 
\end{defn}

From~\cite[Lemma~3 and Proposition~4]{Gav2013} it follows that $P_1$ and $P_2$ are parallelohedra and if $P$ is Voronoi, then $P_1$ and $P_2$ are Voronoi as well.

A.~Ordine proved the following criterion of reducibility for parallelohedra.

\begin{thm}[A.~Ordine, \cite{Ord2005}]\label{thm:ordine_reducibility}
Let $P$ be a parallelohedron. Suppose that each facets of $P$ is colored either with red or with blue so that
\begin{enumerate}
	\item[\rm 1.] Opposite facets of $P$ (with respect to the central symmetry of $P$) are of the same color.
	\item[\rm 2.] If two facets of $P$ belong to a common six-belt, then they are of the same color.
	\item[\rm 3.] Not all facets of $P$ are colored with the same color.  
\end{enumerate}
Then one can represent $P$ as $P_1\oplus P_2$ such that blue facets form $P_1\oplus\partial P_2$ and red facets form $\partial P_1\oplus P_2$.

The reversed statement also holds. Namely, if $P=P_1\oplus P_2$, assume that the facets of $P_1\oplus\partial P_2$ form the blue part of $\partial P$ and
$\partial P_1\oplus P_2$ form the red part. Then the resulting coloring satisfies conditions {\rm 1 -- 3}.
\end{thm}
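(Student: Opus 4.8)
I would prove the two implications separately, beginning with the reversed statement, which is the cleaner half and exposes the geometric meaning of the hypotheses. Assume $P=P_1\oplus P_2$ with $P_i$ spanning complementary subspaces $L_i$, so $\mathbb E^d=L_1\oplus L_2$. Every face of a direct sum is a direct sum $G_1\oplus G_2$ of faces of the factors, with dimensions adding; hence the facets of $P$ are precisely the sets $(\text{facet of }P_1)\oplus P_2$, which I color red, and $P_1\oplus(\text{facet of }P_2)$, which I color blue. Condition 3 is immediate, since the lower-dimensional parallelohedra $P_1$ and $P_2$ both have facets. Condition 1 holds because the central symmetry of $P$ is the product of those of $P_1$ and $P_2$ and therefore maps each family to itself. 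For condition 2 I would classify the $(d-2)$-faces $F$: either the dimension drops by two within one factor, giving $F=F_1\oplus P_2$ with $F_1$ of codimension two in $P_1$, or symmetrically $F=P_1\oplus F_2$, in which case $\belt(F)$ lies entirely in the red, respectively blue, family and is monochromatic; or it drops by one in each factor, giving $F=E_1\oplus E_2$ with $E_i$ a facet of $P_i$, and then $\proj_F(P)=\proj_{E_1}(P_1)\oplus\proj_{E_2}(P_2)$ is a Minkowski sum of two segments, i.e. a parallelogram, so $F$ determines a four-belt and never a six-belt. Thus every six-belt is monochromatic, which is exactly condition 2.

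For the main implication I would recover the factors from the coloring. Set $L_1=\lin\{\mathbf s(F):F\text{ red}\}$ and $L_2=\lin\{\mathbf s(G):G\text{ blue}\}$, and reduce the whole statement to a single assertion, the \emph{parallelism lemma}: for every red facet $F$ and every blue facet $G$ one has $\mathbf s(G)\in\lin\aff F$ and $\mathbf s(F)\in\lin\aff G$. Granting it, $L_2\subseteq\lin\aff F$ for every red $F$ and $L_1\subseteq\lin\aff G$ for every blue $G$. Since $P$ is bounded and full-dimensional its outer facet normals span $\mathbb E^d$, so any vector of $L_1\cap L_2$ is orthogonal to every facet normal and hence is $\mathbf 0$; as the facet vectors span $\mathbb E^d$, this gives $\mathbb E^d=L_1\oplus L_2$. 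Moreover the parallelism lemma places every red facet normal in $L_2^{\perp}$ and every blue one in $L_1^{\perp}$, so in the half-space description of $P$ the red inequalities cut out a cylinder $Q_1$ invariant under translation by $L_2$ and the blue inequalities a cylinder $Q_2$ invariant under translation by $L_1$. Writing $x=x_1+x_2$ along $\mathbb E^d=L_1\oplus L_2$, the functional of each red normal depends only on $x_1$ and that of each blue normal only on $x_2$; hence $Q_1=\proj_{L_2}^{-1}(R_1)$ and $Q_2=\proj_{L_1}^{-1}(R_2)$ with $R_1\subseteq L_1$, $R_2\subseteq L_2$, and $P=Q_1\cap Q_2=R_1\oplus R_2$. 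Setting $P_1=\proj_{L_2}(P)=R_1$ and $P_2=\proj_{L_1}(P)=R_2$ yields $P=P_1\oplus P_2$, with the red facets forming $\partial P_1\oplus P_2$ and the blue ones $P_1\oplus\partial P_2$.

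It remains to prove the parallelism lemma, and this is the step I expect to be the main obstacle. For \emph{adjacent} $F$ and $G$, sharing a $(d-2)$-face $E$, it is short: $\belt(E)$ cannot be a six-belt, for otherwise condition 2 would force $F$ and $G$ to share a color; hence $\belt(E)$ is a four-belt and $\proj_E(P)$ is a parallelogram with sides $\mathbf a$ and $\mathbf b$, where $\proj_E(F)$ is the edge along $\mathbf a$ and $\proj_E(G)$ the edge along $\mathbf b$. The lattice step across the $G$-edge is then $\proj_E(\mathbf s(G))=\pm\mathbf a\in\proj_E(\lin\aff F)$, and since $\lin\aff E\subseteq\lin\aff F$ this lifts to $\mathbf s(G)\in\lin\aff F$, symmetrically for $\mathbf s(F)$. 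The difficulty is to pass from adjacent pairs to arbitrary ones. My plan is a propagation argument along the facet-adjacency graph of $\partial P$, which is connected: fixing a red facet $F$, I would show that the set of blue facet vectors lying in $\lin\aff F$ is stable under crossing a shared $(d-2)$-face, handling four-belts by the adjacent case and six-belts by monochromaticity together with the three-term belt relation $\proj_E\mathbf s(G_1)\pm\proj_E\mathbf s(G_2)\pm\proj_E\mathbf s(G_3)=\mathbf 0$, which keeps membership in $\lin\aff F$ intact as one moves through a belt of blue facets. Equivalently, one shows that all linear relations among the facet vectors are generated by the belt relations; since these never mix the two colors, $L_1$ and $L_2$ cannot interact. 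Turning this propagation into a rigorous argument — in particular excluding that a blue facet vector develops a component transverse to some remote red facet — is the technical heart of the proof and where I would concentrate the effort.
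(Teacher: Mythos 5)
First, a point of reference: the paper does not prove this theorem at all --- it is quoted from Ordine's thesis \cite{Ord2005}, and the paper only remarks that the key ingredient of Ordine's proof is Theorem~\ref{thm:ordine_reducibility_misc}. So there is no in-paper argument to compare yours against; your proposal must stand on its own. The reversed implication is correct and complete: your classification of the $(d-2)$-faces of a direct sum, and the observation that a mixed face $E_1\oplus E_2$ projects to a parallelogram and hence carries a four-belt, is exactly right. The forward implication is also correctly \emph{structured}: reducing everything to the parallelism lemma, deducing $\mathbb E^d=L_1\oplus L_2$ from it, and then splitting the half-space description (which in effect reproves Theorem~\ref{thm:ordine_reducibility_misc}) is sound, and your treatment of the adjacent case via the four-belt forced by condition~2 is correct.

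The genuine gap is the one you name yourself: the parallelism lemma for non-adjacent facets, which is the actual content of Ordine's theorem, is not proved, and the propagation you sketch does not work as stated. Fix a red facet $F$ and suppose $G$, $G'$ are adjacent blue facets with $\mathbf s(G)\in\lin\aff F$ already established. If they meet across a four-belt, the adjacent-case computation gives only $\proj_{E'}(\mathbf s(G'))\in\proj_{E'}(\lin\aff G)$, i.e.\ $\mathbf s(G')\in\lin\aff G$ --- membership in a \emph{different} hyperplane, with no control over the component of $\mathbf s(G')$ transverse to the remote facet $F$. So the invariant ``$\mathbf s(\cdot)\in\lin\aff F$'' is not visibly stable under crossing a $(d-2)$-face, and the six-belt relation only helps once two of the three vectors of a belt are already known to lie in $\lin\aff F$. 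Your fallback --- that all linear relations among the facet vectors are generated by the belt relations --- is itself a substantial unproven assertion, essentially of the same depth as the theorem you are trying to prove; it cannot be taken for granted. As it stands the proposal proves the easy direction and reduces the hard direction to a lemma whose proof is missing precisely where the difficulty lies.
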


We mention that the key to the proof of Theorem \ref{thm:ordine_reducibility} was the following.

\begin{thm}[A.~Ordine, \cite{Ord2005}]\label{thm:ordine_reducibility_misc}
Let $P$ be a parallelohedron in $\mathbb E^d$. Suppose that $q_1$ and $q_2$ are linear spaces of dimension at least 1 such that 
$q_1\oplus q_2 = \mathbb E^d$. Assume that for every facet $F\subset P$ holds
$$\mathbf s(F) \in q_1 \quad \text{or} \quad \mathbf s(F) \in q_2.$$
Then $P = P_1 \oplus P_2$ and $\lin \aff P_i = q_i$ for $i = 1,2$.
\end{thm}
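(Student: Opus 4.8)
The plan is to turn the (metric-free) assertion $P=P_1\oplus P_2$ into a statement about the facet hyperplanes of $P$, and then to read that statement off from the hypothesis via the belt structure. Fix any inner product on $\mathbb E^d$, let $\mathbf n_F$ be the outer normal of a facet $F$ and $h_F$ its support number, and recall that $\mathbf n_F$ vanishes on $\lin\aff F$ while $\langle\mathbf n_F,\mathbf s(F)\rangle=2h_F\neq0$. Let $\pi_1=\proj_{q_2}$ and $\pi_2=\proj_{q_1}$ be the projections onto $q_1$ along $q_2$ and onto $q_2$ along $q_1$, and put $P_1=\pi_1(P)\subset q_1$, $P_2=\pi_2(P)\subset q_2$. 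Because $x=\pi_1(x)+\pi_2(x)$ we always have $P\subseteq P_1+P_2$; moreover $\lin\aff P_i=q_i$, and $q_1\cap q_2=\{\mathbf 0\}$ makes the sum direct. So everything reduces to the reverse inclusion $P_1\oplus P_2\subseteq P$.

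I claim this follows from the single geometric fact $(\ast)$: for every facet $F$, if $\mathbf s(F)\in q_i$ then $q_{3-i}\subseteq\lin\aff F$, i.e. each facet hyperplane contains the subspace complementary to the one carrying its facet vector. Indeed, granting $(\ast)$, a facet $F$ with $\mathbf s(F)\in q_1$ has $\mathbf n_F\perp q_2$, whence $\pi_1^{*}\mathbf n_F=\mathbf n_F$ and $\pi_2^{*}\mathbf n_F=\mathbf 0$, where $\pi_i^{*}$ denotes the adjoint of $\pi_i$; since $h_{\pi_i(P)}(\phi)=h_P(\pi_i^{*}\phi)$, this yields $h_{P_1}(\mathbf n_F)+h_{P_2}(\mathbf n_F)=h_P(\mathbf n_F)=h_F$, and symmetrically when $\mathbf s(F)\in q_2$. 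Thus $P_1+P_2$ lies in every facet halfspace of $P$, giving $P_1\oplus P_2\subseteq P$, and the theorem follows. (The auxiliary metric is only bookkeeping: both hypothesis and conclusion are affine.)

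To establish $(\ast)$ I would use the two facts that the transversal projection $\proj_H$ of a $(d-2)$-face $H$ supplies, recalling that $\proj_H(P)$ is a parallelogram or a centrally symmetric hexagon (property 4) and that the facet vector of an edge of a centred planar parallelohedron is the sum of its two vertices. For a six-belt this gives the exact relation $\mathbf s(F_2)=\mathbf s(F_1)+\mathbf s(F_3)$ on three consecutive facets; were $\mathbf s(F_1)\in q_1$ and $\mathbf s(F_3)\in q_2$, the vector $\mathbf s(F_2)$ would have nonzero components in both $q_1$ and $q_2$, against the hypothesis, so \emph{every six-belt is monochromatic}. (Together with the obvious facts that opposite facets share a class and that the facet vectors span $\mathbb E^d$, this is precisely the colouring of Theorem~\ref{thm:ordine_reducibility}.) For a four-belt the same formula shows that the facet vector of each facet is parallel to the facets adjacent to it: if $F$ and $G$ share a $(d-2)$-face carrying a four-belt, then $\mathbf s(G)\in\lin\aff F$.

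The main obstacle is the passage from these local relations to the global statement $(\ast)$: for a fixed facet $F$ with $\mathbf s(F)\in q_1$ one must show that \emph{every} facet vector in $q_2$, not merely those of the four-belt neighbours of $F$, lies in $\lin\aff F$. I would try to propagate the four-belt parallelism along the graph of facets joined through their shared $(d-2)$-faces, using six-belt monochromaticity to keep the two classes apart and the spanning of $q_2$ by its facet vectors to conclude. Equivalently, one wants to show that $\pi_2(P)$ tiles $q_2$ by the lattice $\Lambda(P)\cap q_2$; carrying out this propagation is exactly the work that makes the present statement the key step behind the reducibility criterion of Theorem~\ref{thm:ordine_reducibility}.
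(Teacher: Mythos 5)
The paper never proves this theorem --- it is imported from Ordine's thesis \cite{Ord2005} and used as a black box --- so there is no internal argument to compare yours against; it has to stand on its own. Your reduction is correct and cleanly organized: with $P_i=\pi_i(P)$ the inclusion $P\subseteq P_1+P_2$ is automatic, and the support-function computation shows that the reverse inclusion, hence the whole theorem, follows from the single claim $(\ast)$ that $\mathbf s(F)\in q_i$ forces $q_{3-i}\subseteq\lin\aff F$. Your two local facts are also correct: the relation $\mathbf s(F_2)=\mathbf s(F_1)+\mathbf s(F_3)$ for a six-belt holds exactly (it is cleanest to read it off from the three tiles sharing the $(d-2)$-face, with centers $\mathbf 0,\mathbf t_1,\mathbf t_2$ and facet vectors $\mathbf t_1,\mathbf t_2,\mathbf t_2-\mathbf t_1$; the planar projection alone only gives it modulo $\lin\aff H$), and together with $q_1\cap q_2=\{\mathbf 0\}$ it forces every six-belt to be monochromatic, while the four-belt computation gives $\mathbf s(G)\in\lin\aff F$ for four-belt neighbours.

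However, the proof is not complete, and the missing step is precisely the substance of the theorem. For a fixed facet $F$ with $\mathbf s(F)\in q_1$, your local facts place into $\lin\aff F$ only the facet vectors of the four-belt neighbours of $F$; since $\lin\aff F\cap q_2$ could a priori be a hyperplane of $q_2$, you still need \emph{every} facet vector lying in $q_2$ (a spanning set of $q_2$) to land in $\lin\aff F$, and six-belt monochromaticity constrains only the colour classes, not these containments. You flag this yourself (``I would try to propagate\dots''), but no mechanism is supplied: walking from $F$ through a four-belt to a colour-2 facet $G$ and then onward through the colour-2 part of $\partial P$ does not transport the containment $\mathbf s(\cdot)\in\lin\aff F$, because the belt relations among colour-2 facets are relations inside $q_2$ and say nothing about the hyperplane $\lin\aff F$. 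Closing this gap is where Ordine's argument does its real work (via the layer structure of $T(P)$ relative to the sublattices generated by each colour class, in the spirit of Section~\ref{sec:3} of this paper), so as written the proposal is a correct reduction plus a statement of the remaining problem rather than a proof.
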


This theorem plays an important role at the very end of this paper.

\subsection{Main results}\hspace*{\fill}

In this paper we prove the following three theorems simultaneously.

\begin{thm}\label{thm:voronoi_for_sum}
Let $I$ be a segment. Suppose that $P$ and $P+I$ are parallelohedra and $P$ is Voronoi in the standard Euclidean metric of $\mathbb E^d$. 
Then $P+I$ is Voronoi in some other Euclidean metric.
\end{thm}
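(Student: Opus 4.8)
The plan is to reduce the problem to the construction of a \emph{canonical scaling} for the tiling $T(P+I)$. Recall the standard reformulation of Voronoi's conjecture: a parallelohedron is Voronoi for a metric $Q$ precisely when one can assign to each facet $F$ of its tiling a positive weight $\lambda(F)$, invariant under the lattice and equal on opposite facets, such that around every six-belt the three transverse outer normals $\mathbf n_1,\mathbf n_2,\mathbf n_3$ satisfy $\lambda_1\mathbf n_1+\lambda_2\mathbf n_2+\lambda_3\mathbf n_3=\mathbf 0$; the metric is then recovered from the relations $Q\,\mathbf s(F)=\lambda(F)\,\mathbf n_F$. Since $P$ is Voronoi in the standard metric, the facet vectors of $T(P)$ are parallel to the Euclidean facet normals and a positive scaling $\lambda$ satisfying the six-belt relations is already at hand. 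First I would fix notation for the free direction $\mathbf v$ of $I$ and record that, because $P$ is Voronoi in the standard metric, $\mathbf s(F)\cdot\mathbf v=0$ for every facet $F$ with $\mathbf v\in\lin\aff F$.

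The core step is a structural description of $T(P+I)$ in terms of $T(P)$ and $\mathbf v$. Writing every face of $P+I$ as a Minkowski sum of a face of $P$ and a face of $I$, the facets split into three classes: (a) translates of facets $F$ of $P$ with $\mathbf n_F\cdot\mathbf v\neq0$, whose facet vector becomes $\mathbf s(F)+\varepsilon\mathbf v$ with $\varepsilon=\operatorname{sign}(\mathbf s(F)\cdot\mathbf v)$; (b) prisms $F+I$ over facets $F$ with $\mathbf v\in\lin\aff F$, keeping the facet vector $\mathbf s(F)$; and (c) genuinely new prisms $G+I$ over those $(d-2)$-faces $G$ of $P$ whose normal cone meets $\mathbf v^{\perp}$ and for which $\mathbf v\notin\lin\aff G$. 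On the level of belts, freeness of $I$ forces every six-belt of $P$ to stay a six-belt (an extra edge direction would turn $\proj_G(P+I)$ into a forbidden eight-gon), while a four-belt either survives or, when $\proj_G\mathbf v$ is not parallel to an edge of the parallelogram $\proj_G(P)$, opens into a six-belt whose new pair of facets is exactly a class-(c) prism. I would then propose the scaling $\lambda'$ that keeps the old value on facets of classes (a) and (b) and defines $\lambda'(G+I)$ on each new facet through the six-belt relation of the belt that created it; simultaneously this determines a candidate form $Q'$.

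Checking that $\lambda'$ is well defined and positive is the heart of the argument, and I expect it to be the main obstacle. For the surviving six-belts the relation holds because the normals are unchanged and $\lambda'=\lambda$ there; the real content is the new six-belts coming from cut four-belts. Since a class-(c) facet $G+I$ generally lies in several belts at once, its weight is overdetermined, and one must show that the value forced by the creating belt, namely the unique $\lambda'(G+I)$ with $\lambda'(G+I)\,\mathbf n_3=-(\lambda_1\mathbf n_1+\lambda_2\mathbf n_2)$, agrees across all belts through $G+I$ and comes out positive. This is a codimension-three phenomenon: the compatibility is governed by the three-dimensional fans $\fan(H)$ of the $(d-3)$-faces $H$ of $P+I$ incident to the new facets, so I would run through the five combinatorial types of three-dimensional fans of Figure~\ref{duals}, using in each case the old six-belt relations for $P$ together with the restriction that $\mathbf v$ is a free direction to verify closure and positivity. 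Finally, should the recovered form $Q'$ be only positive semidefinite, the facet vectors of $P+I$ would lie in a proper pair of complementary subspaces; invoking Theorems~\ref{thm:ordine_reducibility_misc} and~\ref{thm:ordine_reducibility} then exhibits $P+I$ as a direct sum and allows one to finish by induction on the dimension, which is also why the statement is naturally proved together with its auxiliary structural claims.
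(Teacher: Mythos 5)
Your route is genuinely different from the paper's: you try to extend the canonical scaling of $T(P)$ directly to $T(P+I)$, whereas the paper never constructs a scaling at all --- it invokes Grishukhin's criteria (Theorems~\ref{thm:4.1} and~\ref{thm:4.2}) to reduce Theorem~\ref{thm:voronoi_for_sum} to the statement that a Voronoi parallelohedron with a two-dimensional free space is reducible (Theorem~\ref{thm:reducibility_for_twodim_free}), and then proves that by a two-step induction on dimension using crosses, dilatations of quadratic forms, and projections along perfect free planes. Unfortunately your version has a genuine gap exactly at the step you call the heart of the argument. For a class-(c) facet $G\oplus I$ created from a four-belt of $P$ with facets $F_1,F_2$, the old scaling (with $Q=\mathrm{id}$) gives $\lambda_1\mathbf n_1+\lambda_2\mathbf n_2=\mathbf s(F_1)+\mathbf s(F_2)=\mathbf s(G)$, while $\mathbf n_{G\oplus I}$ is the direction orthogonal to both $\lin\aff G$ and $I$. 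The equation $\lambda'(G\oplus I)\,\mathbf n_{G\oplus I}=-(\lambda_1\mathbf n_1+\lambda_2\mathbf n_2)$ therefore has a solution (of any sign) only when $\mathbf n_{G\oplus I}\parallel\mathbf s(G)$, i.e.\ only when $I\perp\mathbf s(G)$ in the metric making $P$ Voronoi. This is precisely the hypothesis of Theorem~\ref{thm:4.1}, it is a metric condition that does not follow from the combinatorics of the fans in Figure~\ref{duals}, and it can fail. The entire content of the paper is to show that whenever it fails for every segment in a free plane, $P$ must be reducible, after which Theorem~\ref{thm:4.2} finishes by induction; no amount of case analysis of three-dimensional fans substitutes for that.

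Two further, smaller problems. First, even where the local relations are solvable, verifying consistency around the $(d-3)$-faces of $P+I$ gives only local compatibility of the projective constraints on $\lambda'$; the existence of a global positive solution is a cohomological statement and does not follow from the local checks alone (this is why the ``canonical scaling'' reformulation is not a shortcut to Voronoi's conjecture in general). Second, your belt bookkeeping omits the belts of $P+I$ spanned by $(d-2)$-faces of the form $E\oplus I$ with $E$ a $(d-3)$-face of $P$; these are exactly the belts that were overlooked in the original proof of Theorem~\ref{thm:1.2} (see the remark following it and Lemma~\ref{lem:cases}), and they impose additional relations on your scaling that your three-class decomposition of facets does not capture. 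Your closing suggestion --- that a degenerate recovered form could be repaired by reducibility via Theorems~\ref{thm:ordine_reducibility} and~\ref{thm:ordine_reducibility_misc} --- points in the right direction but inverts the logic: reducibility is not a fallback for a semidefinite $Q'$, it is the substantive theorem one must prove first in order to know that the obstruction $I\not\perp\mathbf s(G)$ never occurs for an irreducible summand.
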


\begin{thm}\label{thm:cross}
Let $P$ be a Voronoi parallelohedron in $\mathbb E^d$ and let $\Pi_1, \Pi_2$ be hyperplanes. Assume that for every facet $F\subset P$ holds
$$\mathbf s(F) \in \Pi_1 \quad \text{or} \quad \mathbf s(F) \in \Pi_2.$$
Then $P$ is reducible.
\end{thm}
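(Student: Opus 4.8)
The plan is to reduce the statement to Ordine's colouring criterion (Theorem~\ref{thm:ordine_reducibility}): it suffices to colour every facet of $P$ red or blue so that opposite facets get the same colour, facets sharing a six-belt get the same colour, and both colours actually occur. First I would record two preliminaries. Since the facet vectors $\mathbf s(F)$ span $\mathbb E^d$, they cannot all lie in a single hyperplane; hence $\Pi_1\ne\Pi_2$, so $W=\Pi_1\cap\Pi_2$ has dimension $d-2$, and moreover both sets
$$A_1=\{\mathbf s(F): \mathbf s(F)\in\Pi_1\setminus\Pi_2\}, \qquad A_2=\{\mathbf s(F): \mathbf s(F)\in\Pi_2\setminus\Pi_1\}$$
are nonempty (otherwise all facet vectors would lie in one of the $\Pi_i$, contradicting spanning).

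The main local ingredient is the six-belt relation: for a six-belt the three facet vectors, one chosen from each antipodal pair with a suitable orientation, satisfy a linear relation with coefficients $\pm1$, namely $\mathbf s_1+\mathbf s_2+\mathbf s_3=\mathbf 0$. This holds because $\proj_F(P)$ is a centrally symmetric hexagon and the six belt facet vectors are $\pm\mathbf s_1,\pm\mathbf s_2,\pm\mathbf s_3$, the three neighbour-translations of a hexagonal arrangement, which sum to zero. Combined with the hypothesis this yields the key claim: \emph{in every six-belt the three facet vectors lie in a common hyperplane $\Pi_1$ or $\Pi_2$.} Indeed, if not all three lay in $\Pi_1$ and not all in $\Pi_2$, then (since each is in $\Pi_1\cup\Pi_2$) some $\mathbf s_i\in\Pi_1\setminus\Pi_2$ and some $\mathbf s_j\in\Pi_2\setminus\Pi_1$; but the third vector equals $\pm\mathbf s_i\pm\mathbf s_j$, which lies in neither $\Pi_1$ nor $\Pi_2$, contradicting the assumption that every facet vector lies in $\Pi_1\cup\Pi_2$.

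With this in hand I would colour facets by the connected components of the graph $\mathcal G$ whose vertices are the facets of $P$ and whose edges join antipodal facets and facets lying in a common six-belt: colour a component blue if it contains a facet with vector in $\Pi_1\setminus\Pi_2$, and red otherwise. Conditions~1 and~2 of Theorem~\ref{thm:ordine_reducibility} then hold automatically, since the colour is constant on components and both antipodal pairs and six-belt related pairs are joined by an edge. Condition~3 follows from $A_1,A_2\ne\emptyset$, because every facet with vector in $A_1$ is blue and every facet with vector in $A_2$ is red, \emph{provided the colouring is well defined}. (I avoid applying Theorem~\ref{thm:ordine_reducibility_misc} directly, since $\Pi_1$ and $\Pi_2$ are hyperplanes meeting in the $(d-2)$-space $W$ rather than complementary subspaces, so the $\Pi_i$ themselves do not give the required direct sum.)

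Thus the argument rests on one assertion, which I expect to be the hard part: \emph{no component of $\mathcal G$ contains both a facet with vector in $\Pi_1\setminus\Pi_2$ and a facet with vector in $\Pi_2\setminus\Pi_1$.} By the key claim a single six-belt cannot mix the two, and antipodal steps preserve membership in each $\Pi_i$; the only way a mixing could propagate is through a facet $G$ with $\mathbf s(G)\in W$ that belongs simultaneously to a six-belt all of whose vectors lie in $\Pi_1$ (with one vector outside $\Pi_2$) and to a six-belt all of whose vectors lie in $\Pi_2$ (with one vector outside $\Pi_1$). Ruling this out requires analysing how the belts through $G$ interact along their common $(d-3)$-faces $H$: here I would use the classification of three-dimensional fans $\fan(H)$ into the five types of Figure~\ref{duals}, together with the Voronoi hypothesis (which makes the dual Delaunay cells metrically rigid, e.g.\ rectangular at four-belts), and check in each type that the $\Pi_1/\Pi_2$ labelling of the facets incident to $H$ is forced to be consistent. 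Once this local consistency is established, the colouring is well defined and Theorem~\ref{thm:ordine_reducibility} yields $P=P_1\oplus P_2$.
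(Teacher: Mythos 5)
Your reduction to Ordine's colouring criterion is set up correctly as far as it goes: the observation that the three vectors of any six-belt satisfy $\mathbf s_1+\mathbf s_2+\mathbf s_3=\mathbf 0$ and therefore all lie in $\Pi_1$ or all in $\Pi_2$ is sound, and the component colouring automatically satisfies conditions 1 and 2 of Theorem~\ref{thm:ordine_reducibility}. But the entire content of the theorem is concentrated in the assertion you flag as ``the hard part'' and do not prove: that no component of $\mathcal G$ contains both a facet with vector in $\Pi_1\setminus\Pi_2$ and one with vector in $\Pi_2\setminus\Pi_1$. This is not a local statement. A path in $\mathcal G$ can pass from one side to the other through an arbitrarily long chain of facets whose vectors lie in $W=\Pi_1\cap\Pi_2$, with consecutive belts overlapping in facets rather than in a single $(d-3)$-face, so checking the five fan types of Figure~\ref{duals} at individual $(d-3)$-faces does not control the global component structure. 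Moreover your sketch gives no concrete mechanism by which the Voronoi hypothesis enters, yet it must enter essentially: Theorem~\ref{thm:ordine_reducibility_misc} shows that for general parallelohedra one needs \emph{complementary} subspaces, and the whole point of Theorem~\ref{thm:cross} is that for Voronoi parallelohedra two hyperplanes meeting in a $(d-2)$-space suffice. In effect, your unproved claim is equivalent (via Theorems~\ref{thm:ordine_reducibility} and~\ref{thm:cross_reduction}) to the theorem itself, so the proposal is circular at its core.

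For comparison, the paper does not attempt any direct colouring argument. Theorem~\ref{thm:cross} is proved by induction on dimension, intertwined with Theorems~\ref{thm:voronoi_for_sum} and~\ref{thm:cross_reduction}: given a cross, one performs two successive rank-one dilatations of the quadratic form along suitably scaled normals $\mathbf n_1,\mathbf n_2$ to $\Pi_1,\Pi_2$ (Lemma~\ref{lem:8.2}), shows via Lemma~\ref{lem:7.2} and Corollary~\ref{cor:7.3} that the cross survives dilatation while the dilated parallelohedron acquires a two-dimensional free space, applies Theorem~\ref{thm:reducibility_for_twodim_free} (available by the induction hypothesis with a dimension shift) to split the dilated parallelohedron, aligns the summands with $\Pi_1,\Pi_2$ using Theorem~\ref{thm:cross_reduction}, and then transfers the splitting back through the dilatations by a quadratic-form decomposition argument (Lemma~\ref{lem:8.3}). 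The global information your approach is missing is supplied there by the metric deformation and the free-plane theorem, not by belt-by-belt inspection. To salvage your route you would need an independent proof of the non-mixing claim, and no such elementary argument is known.
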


\begin{thm}\label{thm:cross_reduction}
Let $P$ be a Voronoi parallelohedron in $\mathbb E^d$ and let $\Pi_1, \Pi_2$ be hyperplanes. Assume that the following conditions hold.
\begin{enumerate}
	\item[\rm 1.] $P = P_1 \oplus P_2 \oplus \ldots \oplus P_k$, where $k > 1$ and all $P_i$ are irreducible.
	\item[\rm 2.] $\mathbf s(F) \in \Pi_1$ or $\mathbf s(F) \in \Pi_2$ for every facet $F\subset P$.
\end{enumerate}
Then for each $i = 1, 2, \ldots, k$ one has $\aff P_i \parallel \Pi_1$ or $\aff P_i \parallel \Pi_2$
\end{thm}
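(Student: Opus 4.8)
The plan is to prove the statement summand by summand: for a fixed $i$ I will show that the whole linear hull $L_i := \lin\aff P_i$ lies in $\Pi_1$ or in $\Pi_2$, which is exactly $\aff P_i\parallel\Pi_1$ or $\aff P_i\parallel\Pi_2$ (note that, since the standard vectors are required to lie in $\Pi_1$ or $\Pi_2$, both $\Pi_1$ and $\Pi_2$ must be linear hyperplanes through the origin). The first step is to understand how the standard vectors of $P$ distribute among the subspaces $L_i$. Placing the center of $P$ at the origin, every facet of $P = P_1\oplus\cdots\oplus P_k$ has the form $P_1\oplus\cdots\oplus G_i\oplus\cdots\oplus P_k$, where exactly one factor is replaced by a facet $G_i$ of $P_i$; by Property~5 its standard vector equals twice its center, and this center lies in $L_i$. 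Hence $\mathbf s(F)\in L_i$, and the standard vectors of $P$ lying in $L_i$ are precisely the facet vectors of the parallelohedron $P_i$ viewed inside $L_i$. In particular these vectors span $L_i$.

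Now fix $i$ and suppose, towards a contradiction, that $L_i\not\subseteq\Pi_1$ and $L_i\not\subseteq\Pi_2$. Then $q_1 := \Pi_1\cap L_i$ and $q_2 := \Pi_2\cap L_i$ are hyperplanes of $L_i$, each of codimension $1$ in $L_i$. By Condition~2 every standard vector of $P$, in particular every facet vector of $P_i$, lies in $\Pi_1\cup\Pi_2$; combined with the previous paragraph, each facet vector of $P_i$ lies in $q_1\cup q_2$. Since these vectors span $L_i$, they cannot all lie in a single hyperplane of $L_i$, so $q_1\ne q_2$.

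At this point I would apply Theorem~\ref{thm:cross} to the Voronoi parallelohedron $P_i$ (it is Voronoi by Gavrilyuk's result quoted above) inside $L_i$, with the two distinct hyperplanes $q_1, q_2$. Because $k>1$, each summand satisfies $\dim P_i < d$, so Theorem~\ref{thm:cross} is available in this dimension through the simultaneous induction on $d$ under which the three main theorems are proved. Its conclusion is that $P_i$ is reducible, contradicting the irreducibility assumed in Condition~1. Hence the supposition fails, and $L_i\subseteq\Pi_1$ or $L_i\subseteq\Pi_2$, i.e. $\aff P_i\parallel\Pi_1$ or $\aff P_i\parallel\Pi_2$. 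As $i$ was arbitrary, this proves the theorem.

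The main obstacle is the first step: verifying cleanly that the standard vectors of $P$ partition according to the summands, with those lying in $L_i$ being exactly the facet vectors of $P_i$ (and therefore spanning $L_i$). This rests on the facial structure of a direct sum together with the fact that the tiling $T(P)$ and its lattice split compatibly with the decomposition $\mathbb E^d = L_1\oplus\cdots\oplus L_k$; once this is in place, the rest is a short dichotomy. A secondary point requiring care is the logical bookkeeping of the simultaneous induction, to ensure that only the strictly lower-dimensional instance of Theorem~\ref{thm:cross} is invoked.
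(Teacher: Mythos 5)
Your proposal is correct and follows essentially the same route as the paper (Lemma~\ref{lem:8.1}): assume some summand $P_i$ is parallel to neither hyperplane, observe that $(\lin\aff P_i\cap\Pi_1,\;\lin\aff P_i\cap\Pi_2)$ is then a cross for the lower-dimensional Voronoi parallelohedron $P_i$, and invoke Theorem~\ref{thm:cross} from the simultaneous induction to contradict irreducibility. The only difference is that you spell out the (correct) facts the paper leaves implicit, namely that the facet vectors of $P$ lying in $\lin\aff P_i$ are exactly the facet vectors of $P_i$ and span it, and that the $P_i$ are Voronoi by Gavrilyuk's result.
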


Theorems \ref{thm:cross} and \ref{thm:cross_reduction} require that $P$ has a special property. Since this property is extremely important for us, we
give a definition.

\begin{defn}\label{def:cross}
Let $P$ be a parallelohedron in $\mathbb E^d$. We say that a pair of hyperplanes $(\Pi_1, \Pi_2)$ is a {\it cross} for $P$ if for every facet 
$F\subset P$ holds
$$\mathbf s(F) \in \Pi_1 \quad \text{or} \quad \mathbf s(F) \in \Pi_2.$$
\end{defn}

Theorems \ref{thm:cross} and \ref{thm:cross_reduction} are direct generalizations of Theorem \ref{thm:ordine_reducibility_misc} in the class of
Voronoi parallelohedra. In turn, Voronoi's Conjecture for space-filling zonotopes first proved by R.~Erdahl~\cite{Erd1999} is an immediately follows
by induction from Theorem~\ref{thm:voronoi_for_sum}.

\section{Free segments and free spaces of parallelohedra}

\begin{defn}
Let $P$ be a $d$-dimensional parallelohedron. Let $I$ be a segment such that $P+I$ is a $d$-dimensional parallelohedron as well. Then $I$ is
called a {\it free segment} for $P$.
\end{defn}

\begin{defn}
Let $P$ be a $d$-dimensional parallelohedron. A linear space $p$ is called a {\it free space} for $P$ if every segment $I\parallel p$ is free for $P$.
\end{defn}

We will extensively use the following criterion of free segments.

\begin{thm}[V.~Grishukhin, \cite{Gri2004}]\label{thm:1.2}
Let $P$ be a parallelohedron and $I$ be a segment. Then $I$ is free for $P$ if and only if every six-belt of $P$ contains a facet parallel to $I$.
\end{thm}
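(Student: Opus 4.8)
The plan is to invoke Venkov's criterion. Since $P+I$ is $d$-dimensional and, being a Minkowski sum of centrally symmetric bodies, centrally symmetric, it is a parallelohedron if and only if it satisfies the two remaining Minkowski--Venkov conditions: every facet has a center of symmetry, and every belt consists of $4$ or $6$ facets. Write $I=\conv\{-\mathbf v/2,\mathbf v/2\}$. The engine of the proof is Property~4: for a $(d-2)$-face $F$, the belt $\belt(F)$ is faithfully recorded by the polygon $\proj_F(P)$, whose edges correspond to the facets of the belt. As $\proj_F(P+I)=\proj_F(P)+\proj_F(I)$, the whole question becomes one about how a centrally symmetric polygon (a parallelogram or a hexagon) changes when a segment is added to it.

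For necessity I would argue by contraposition. Suppose a six-belt $\belt(F)$ contains no facet parallel to $I$. Then $\mathbf v\notin\lin\aff F$ (otherwise $I$ would be parallel to every facet of the belt), so $\proj_F(I)$ is a genuine segment, and the absence of a parallel facet means that $\proj_F(\mathbf v)$ is parallel to none of the three edge directions of the hexagon $\proj_F(P)$. Adding such a segment to a centrally symmetric hexagon yields a centrally symmetric octagon. Hence $P+I$ has a $(d-2)$-face, a translate of $F$, whose belt has $8$ facets, violating Property~4; so $I$ is not free.

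For sufficiency, assume every six-belt of $P$ has a facet parallel to $I$ and verify both conditions for $P+I$. I would first describe the facets of the sum: each is a translate of a facet of $P$ (facet normal not orthogonal to $\mathbf v$), a prism $\Phi+I$ over a facet $\Phi$ of $P$ (normal orthogonal to $\mathbf v$), or a prism $F_v+I$ over a $(d-2)$-face $F_v$ whose planar normal cone is split by $\mathbf v^{\perp}$. Such a splitting is exactly the event that $\proj_{F_v}(P)$ gains a new edge upon adding $\proj_{F_v}(I)$; under the hypothesis this happens only at four-belts, since a hexagon to which a segment parallel to one of its edges is added stays a hexagon. But the $(d-2)$-faces of four-belts are standard, hence centrally symmetric by Property~5, so every facet of $P+I$ is centrally symmetric and the facet condition holds.

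It remains to show that $\proj_\ell(P+I)$ is a parallelogram or a centrally symmetric hexagon for every $(d-2)$-face direction $\ell$ of $P+I$. If $\ell=\lin\aff F$ for a $(d-2)$-face $F$ of $P$, then $\proj_\ell(P+I)=\proj_F(P)+\proj_F(I)$: a parallelogram remains a parallelogram or becomes a hexagon, and a hexagon remains a hexagon precisely because, by hypothesis, $\proj_F(\mathbf v)$ is parallel to one of its edges. The remaining, genuinely new directions are $\ell=\lin\aff G\oplus\langle\mathbf v\rangle$ coming from $(d-3)$-faces $G$; here $\proj_\ell(P+I)$ is the planar shadow of the three-dimensional fan $\proj_G(P)$ along $\proj_G(I)$, and the hypothesis descends, since every six-belt of this fan comes from a six-belt of $P$ through $G$ and so contains a facet parallel to $\proj_G(\mathbf v)$. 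The crux is thus a three-dimensional statement: if every six-belt of a three-dimensional parallelohedron $R$ contains a facet parallel to a segment $J$, then the shadow $\proj_J(R)$ has at most six edges. I expect this to be the main obstacle. It can be settled through the classification of three-dimensional fans in Figure~\ref{duals}; using that every three-dimensional parallelohedron has centrally symmetric facets and is therefore a zonotope, the shadow is generated by the projected generators, and the belt hypothesis forces them to span at most three directions, so the shadow is at most a hexagon. Granting this, all belts of $P+I$ have $4$ or $6$ facets, and $P+I$ is a parallelohedron.
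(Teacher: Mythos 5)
First, a remark on the comparison itself: the paper does not prove Theorem~\ref{thm:1.2}. It cites Grishukhin's paper, explicitly notes that the original proof was incomplete because the belts of $P+I$ spanned by $(d-2)$-faces of the form $E\oplus I$ (with $E$ a $(d-3)$-face of $P$) were never checked, and refers to the case analysis of Lemma~\ref{lem:cases} and Figures~\ref{segm1}--\ref{segm2}, taken from the separate paper where the gap is closed. Your necessity argument is correct and complete, and your verification of central symmetry of the facets of $P+I$ and of the belts inherited from $(d-2)$-faces of $P$ is also sound. So you have correctly located the crux in exactly the place where the literature had a gap.

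However, your treatment of that crux does not work as written. For a $(d-2)$-face $E\oplus I$ of $P+I$, the Minkowski--Venkov condition counts \emph{all} facets of $P+I$ parallel to $\ell=\lin\aff E\oplus\langle\mathbf v\rangle$, i.e.\ the edges of the shadow $\proj_\ell(P+I)=\proj_{J}(\proj_E(P))$ with $J=\proj_E(I)$. You reduce this to a statement about ``the three-dimensional parallelohedron $\proj_G(P)$'', but $\proj_E(P)$ is merely a centrally symmetric $3$-polytope: Venkov's projection theorem (Theorem~\ref{thm:venkov_proj}) requires positive width along the projection space, which $P$ need not have along $\lin\aff E$, so there is no reason for $\proj_E(P)$ to be a parallelohedron, and hence no license to invoke the classification of the five $3$-dimensional parallelohedra or the zonotope argument. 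Moreover, the claimed descent of the hypothesis fails for the same reason: a $2$-face of $\proj_E(P)$ parallel to $J$ corresponds to a facet of $P$ that is parallel to $\lin\aff E$ but need not contain any translate of $E$, so the ``six-belts of this fan'' are not controlled by the six-belts of $P$ through $E$. (You also conflate $\proj_E(P)$, a polytope, with $\fan(E)$, the local fan of the tiling; the latter is the object whose five combinatorial types are classified.) Finally, even granting the reduction, your concluding $3$-dimensional claim --- that the belt hypothesis forces the projected generators to span at most three directions --- is asserted rather than proved, and you yourself flag it as the expected obstacle. The correct completion is the local analysis of $\fan(E)$ against the position of $\proj_E(I)$ carried out case by case in Lemma~\ref{lem:cases}; as it stands, your proof of sufficiency is incomplete precisely at the step that made the original published proof incomplete.
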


We mention that the proof of Theorem~\ref{thm:1.2} in~\cite{Gri2004} was incomplete. M.~Dutour noticed that not all belts of $P+I$ were checked
to have 4 or 6 facets. Namely, the belts spanned by $(d-2)$-faces of form $E\oplus I$, where $E$ is a $(d-3)$-face of $P$, were not considered.
The same remark refers to Theorem~\ref{thm:4.1} as well. However, the missing case is considered in~\cite{DGM2013}, where the complete proof of
Theorem~\ref{thm:1.2} is given, and the same case analysis gives the proof of Theorem~\ref{thm:4.1}. See also Lemma~\ref{lem:cases}.

Theorem \ref{thm:1.2} has an immediate corollary which motivates introducing the notion of free space.

\begin{cor}\label{cor:free_space}
Let $P$ be a parallelohedron and let $F_1, F_2, \ldots, F_k$ be facets of $P$ with the property that each six-belt of $P$ contains at least one $F_i$.
Then
\begin{equation}\label{eq:perfect_free}
\lin\aff F_1 \cap \lin\aff F_2 \cap \ldots \cap \lin\aff F_k
\end{equation}
is a free space for $P$.
\end{cor}

\begin{defn}
Let $P$ be a $d$-dimensional parallelohedron. A free space for $P$ of form (\ref{eq:perfect_free}) is called {\it perfect}.
\end{defn}

The notions and statements above concerning free segments and free spaces do not require that $P$ is Voronoi. If, however, $P$ is Voronoi, then

\begin{enumerate}
	\item $I$ is free for $P$ if and only if each triple of facet vectors corresponding to a six-belt contains a vector $\mathbf s(F)\, \bot\, I$.
	\item If $\mathbf s(F_1), \mathbf s(F_2), \ldots, \mathbf s(F_k)$ are facet vectors of $P$ and each triple of facet vectors corresponding to a
	  six-belt contains some $\mathbf s(F_i)$ or some $-\mathbf s(F_i)$, then the orthogonal complement
    \begin{equation}\label{eq:perfect_free_alt}
      \left\langle \mathbf s(F_1), \mathbf s(F_2), \ldots, \mathbf s(F_k) \right\rangle^{\bot}
    \end{equation}
    is a perfect free space for $P$.
\end{enumerate}

Now return from the Voronoi case to the case of general parallelohedra.

\begin{defn}
Let $P$ be a parallelohedron of dimension $d$ and let $I$ be a free segment for $P$. We call a $(d-2)$-dimensional face $F$ of $P$ {\it semi-shaded}
by $I$ if $F\oplus I$ is a facet of $P+I$.
\end{defn}

The following statement also immediately follows from \ref{thm:1.2}.

\begin{cor}
Let $P$ be a $d$-dimensional parallelohedron and let $I$ be a free segment for $P$. Then every $(d-2)$-dimensional face of $P$ semi-shaded by $I$ is standard.
\end{cor}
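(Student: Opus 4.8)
The plan is to recast semi-shading in terms of normal cones and then reduce the claim to an elementary planar fact about the belt $\belt(F)$, read off in the $2$-plane transverse to $F$. Write $W:=(\lin\aff F)^{\perp}$ and let $N(F)\subset W$ be the normal cone of $F$, i.e.\ the set of outer normals $\mathbf n$ with $F\subseteq F_P(\mathbf n)$, where $F_P(\mathbf n)$ denotes the face of $P$ extremal in direction $\mathbf n$. Since $\dim F=d-2$, the cone $N(F)$ is two-dimensional. First I would establish the description: $F$ is semi-shaded by $I$ if and only if some $\mathbf n\in\inter N(F)$ satisfies $\mathbf n\perp I$. Indeed, $F\oplus I=F_P(\mathbf n)+F_I(\mathbf n)$ is a face of $P+I$ exactly when $F_P(\mathbf n)=F$ and $F_I(\mathbf n)=I$; the first condition means $\mathbf n\in\inter N(F)$, and the second means $\mathbf n\perp I$, so that the whole segment (not an endpoint) is extremal. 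As $\dim(F\oplus I)=d-1$, such a face is automatically a facet. Equivalently, projecting along $\lin\aff F$, the belt $\belt(F)$ becomes the boundary of the polygon $Q:=\proj_F(P)$ (a parallelogram for a four-belt, a centrally symmetric hexagon for a six-belt), $F$ becomes a vertex $v$ of $Q$, and $F$ is semi-shaded precisely when $v+\proj_F(I)$ is an edge of $Q+\proj_F(I)$.

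Next I would argue by contraposition: assume $\belt(F)$ is a six-belt and deduce that $F$ is not semi-shaded. Since $I$ is free for $P$, Theorem~\ref{thm:1.2} guarantees that the six-belt $\belt(F)$ contains a facet $G$ parallel to $I$. Because $G\parallel I$ means that the direction of $I$ lies in $\lin\aff G$, while the outer normal $\mathbf n_G$ of $G$ is orthogonal to $\lin\aff G$, we obtain $\mathbf n_G\perp I$. All facet normals of $\belt(F)$ lie in the plane $W$, so $\mathbf n_G\in W\cap I^{\perp}$. For $F\oplus I$ to be a genuine direct sum we must have $I\not\parallel F$, i.e.\ the direction of $I$ does not lie in $\lin\aff F=W^{\perp}$; hence $W\not\subseteq I^{\perp}$ and the line $W\cap I^{\perp}$ is exactly $\mathbb R\,\mathbf n_G$. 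Thus the \emph{only} directions in $W$ orthogonal to $I$ are $\pm\mathbf n_G$, both of which are facet normals of the belt (the second by the central symmetry of $P$).

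Finally I would invoke the structure of the normal fan of $Q$: its rays are exactly the facet normals of $\belt(F)$, and each such ray separates the two-dimensional normal cones of two adjacent vertices. Consequently neither $\mathbf n_G$ nor $-\mathbf n_G$ can lie in $\inter N(v)=\inter N(F)$. By the characterization of the first paragraph, no admissible normal exists, so $F$ is not semi-shaded --- contradicting our assumption. Hence $\belt(F)$ cannot be a six-belt; by Property~4 it must be a four-belt, which is equivalent, as recalled above, to $F$ being standard.

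The hard part is the bookkeeping of the first paragraph: correctly reducing ``$F\oplus I$ is a facet of $P+I$'' to the normal-cone condition, and verifying in the transverse plane that a belt facet parallel to $I$ pins the orthogonal direction onto a ray of $\fan(F)$ rather than letting it fall into the relative interior of a vertex cone. Once this local picture is in place, the statement follows immediately from Theorem~\ref{thm:1.2}, which is precisely why it is phrased as a corollary.
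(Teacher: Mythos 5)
Your argument is correct and follows the route the paper intends: the statement is left as an immediate consequence of Theorem~\ref{thm:1.2}, and your contrapositive --- a six-belt must contain a facet $G\parallel I$, whose normal $\pm\mathbf n_G$ then exhausts $W\cap I^{\perp}$ and, being a ray of the normal fan of $\proj_F(P)$, cannot lie in $\inter N(F)$ --- is exactly the local picture that makes that deduction work. The normal-cone characterization of semi-shading and the use of central symmetry to handle $-\mathbf n_G$ are both correctly handled.
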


Consequently, the standard vector $\mathbf s(F)$ is defined for every $(d-2)$-dimensional face $F$ semi-shaded by $I$.

Introduce the notation
$$\mathcal A_I(P) = \{\mathbf s(F) : \dim\aff F = d-2 \; \text{and $F$ is semi-shaded by $I$} \},$$
$$\mathcal B_I(P) = \{\mathbf s(F) : \dim\aff F = d-1 \; \text{and $F\parallel I$} \}.$$

Working with the sets $\mathcal A_I(P)$ and $\mathcal B_I(P)$ we will need a theorem by B.~Venkov and a corollary emphasized by \'A.~Horv\'ath.
We provide both results below.

\begin{defn}
Let $P$ be a $d$-dimensional parallelohedron and $p$ be a linear space of dimension $d'$, where $0<d'<d$. Assume that for every point $\mathbf x \in \mathbb E^d$
the set $P \cap (\mathbf x+p)$ is either a $d'$-dimensional polytope or empty. Then we say that $P$ {\it has positive width} along $p$.
\end{defn}

\begin{thm}[B.~Venkov, \cite{Ven1959}]\label{thm:venkov_proj}
Assume that $P$ is a $d$-dimensional parallelohedron with positive width along a $d'$-dimensional linear space $p$. Let $F_1, F_2, \ldots, F_k$ be all 
facets of $P$ parallel to $p$ and let $\mathbf s_i = \mathbf s(F_i)$. Finally, let $\proj_p$ denote the projection along $p$ onto the complementary
space $q$. Then
\begin{enumerate}
	\item[\rm 1.] $\proj_p$ is a bijection of $\left\langle \mathbf s_1, \mathbf s_2, \ldots, \mathbf s_k \right\rangle$ and $q$. In particular,
	$$\dim \left\langle \mathbf s_1, \mathbf s_2, \ldots, \mathbf s_k \right\rangle = d-d'.$$ 
	\item[\rm 2.] The set 
	$$\{ \proj_p (P+\mathbf t) : \mathbf t \in \mathbb Z(\mathbf s_1, \mathbf s_2, \ldots, \mathbf s_k) \} $$
	is a face-to-face tiling of $q$ by parallelohedra. All tiles are translates of $\proj_p (P)$.
\end{enumerate}
\end{thm}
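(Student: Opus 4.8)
The plan is to study the projected body $\overline P:=\proj_p(P)\subset q$ and to read off the tiling of $q$ from $T(P)$. First I would exploit positive width: since $P\cap(\mathbf x+p)$ is a $d'$-dimensional polytope or empty for every $\mathbf x$, the image $\overline P$ is a $(d-d')$-dimensional polytope onto which $P$ projects with $d'$-dimensional fibres. I would then separate the facets of $P$ according to whether they are parallel to $p$: if $p\subseteq\lin\aff F$ then $\dim\proj_p(F)=(d-1)-d'=d-d'-1$, so $\proj_p(F)$ is a facet of $\overline P$, whereas if $p\not\subseteq\lin\aff F$ then $\proj_p(F)$ is $(d-d')$-dimensional and lies in $\inter\overline P$. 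Hence $F_1,\dots,F_k$ are exactly the facets of $P$ whose projections are the facets of $\overline P$. Moreover $\mathbf s_i\notin\lin\aff F_i\supseteq p$, so $\proj_p(\mathbf s_i)\neq\mathbf 0$ is transversal to $\proj_p(F_i)$, and the facet vector of $\overline P$ across $\proj_p(F_i)$ is $\proj_p(\mathbf s_i)$.

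For the surjectivity half of Part 1 I would argue dually. Normalize so that $P$ is centred at $\mathbf 0$. Suppose $\langle\mathbf s_1,\dots,\mathbf s_k\rangle+p\neq\mathbb E^d$ and choose a nonzero functional $\ell$ vanishing on every $\mathbf s_i$ and on $p$. Since $P$ is full-dimensional and centred at $\mathbf 0$, the functional $\ell$ attains a positive maximum on a facet $F^\ast$; the condition $\ell|_p=0$ forces $\lin\aff F^\ast=\ker\ell\supseteq p$, so $F^\ast$ is parallel to $p$, say $F^\ast=F_j$. Then $\ell(\mathbf s_j)=2\max_P\ell>0$, because $\tfrac12\mathbf s_j$ is the centre of $F_j$, contradicting $\ell(\mathbf s_j)=0$. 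Thus $\langle\mathbf s_i\rangle+p=\mathbb E^d$, i.e. $\proj_p$ maps $\langle\mathbf s_i\rangle$ onto $q$.

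Next I would prove that $\overline P$ is a parallelohedron, which yields Part 2. Central symmetry of $\overline P$ and of each facet $\proj_p(F_i)$ is inherited from $P$ under the linear map $\proj_p$. For the belt condition, every $(d-d'-2)$-face of $\overline P$ is $\proj_p(E)$ for a $(d-2)$-face $E$ of $P$ with $p\subseteq\lin\aff E$; every facet $F$ of $\belt(E)$ satisfies $p\subseteq\lin\aff E\subseteq\lin\aff F$, so it is parallel to $p$ and projects into the belt of $\proj_p(E)$, and since $p\subseteq\lin\aff E$ the planar fan of the belt is left unchanged by $\proj_p$. Hence the belts of $\overline P$ inherit the $4$-or-$6$ count, and by the Minkowski--Venkov criterion $\overline P$ is a parallelohedron in $q$. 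Its facet vectors are the $\proj_p(\mathbf s_i)$, and since the facet vectors of a parallelohedron generate its translation lattice, $\Lambda(\overline P)=\proj_p(\mathbb Z(\mathbf s_1,\dots,\mathbf s_k))$. Therefore $\{\proj_p(P+\mathbf t):\mathbf t\in\mathbb Z(\mathbf s_1,\dots,\mathbf s_k)\}=\{\overline P+\mathbf u:\mathbf u\in\Lambda(\overline P)\}$ is exactly the face-to-face tiling of $q$ by translates of $\overline P$, which is Part 2.

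The injectivity half of Part 1 is what I expect to be the main obstacle, since it does not follow from Part 2. It amounts to $\langle\mathbf s_i\rangle\cap p=\{\mathbf 0\}$, i.e. $\dim\langle\mathbf s_i\rangle=d-d'$; as $\proj_p(\langle\mathbf s_i\rangle)=q$ already has dimension $d-d'$, this is equivalent to saying that every linear relation $\sum_i c_i\proj_p(\mathbf s_i)=\mathbf 0$ lifts to $\sum_i c_i\mathbf s_i=\mathbf 0$ in $\mathbb E^d$. The relations coming from six-belts do lift: the three tiles $P,\ P+\mathbf t_1,\ P+\mathbf t_2$ meeting along a six-belted $(d-2)$-face contribute facet vectors $\mathbf t_1,\ \mathbf t_2,\ \mathbf t_2-\mathbf t_1$ obeying $\mathbf t_1+(\mathbf t_2-\mathbf t_1)=\mathbf t_2$ as genuine vectors of $\mathbb E^d$, which is precisely the corresponding six-belt relation among the facet vectors of $\overline P$. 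The hard point is to show that these six-belt relations (together with the trivial opposite-facet relations) span all linear relations among the facet vectors of $\overline P$; I would approach this by a monodromy argument on the $2$-skeleton of the dual of $T(\overline P)$, where each $2$-cell is a four- or six-belt, so that an arbitrary relation is rewritten through the belts. Granting the spanning statement, every relation lifts, $\dim\langle\mathbf s_i\rangle\le d-d'$, and Part 1 is complete.
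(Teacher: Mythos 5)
This theorem is not proved in the paper at all: it is quoted verbatim from Venkov's 1959 article \cite{Ven1959} and used as a black box (together with Horv\'ath's Corollary~\ref{cor:horvath}). So there is no in-paper argument to compare yours against, and your attempt has to stand on its own. It does not quite do so, for two reasons.

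First, the surjectivity argument is broken as written. A linear functional $\ell$ attains its maximum over $P$ on a \emph{face}, not necessarily on a facet, so you cannot conclude $\lin\aff F^\ast=\ker\ell$. What positive width actually gives you (via $\ell|_p=0$) is that the maximal face $G$ contains a $d'$-dimensional polytope parallel to $p$ through each of its points, hence every facet containing $G$ is parallel to $p$; but for such a facet $F_j$ the centre $\tfrac12\mathbf s(F_j)$ need not lie in the maximal face, so the punchline $\ell(\mathbf s_j)=2\max_P\ell$ fails. What you are really claiming is that the facet centres of the centrally symmetric polytope $\proj_p(P)$ span $q$, and that does not follow from convexity and central symmetry alone by this argument; the clean route is to get surjectivity \emph{after} Part 2, from the fact that the facet vectors of the $(d-d')$-dimensional parallelohedron $\proj_p(P)$ generate its full-rank translation lattice (itself a dual-graph connectivity statement, not a one-line observation).

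Second, and more seriously, you do not prove the injectivity half of Part 1 --- you correctly identify it as the main obstacle and then reduce it to the assertion that all linear relations among the facet vectors of $\proj_p(P)$ are spanned by its six-belt relations, which you only name (``a monodromy argument on the $2$-skeleton of the dual'') without carrying out. That assertion is essentially the whole content here: it amounts to showing that the group freely generated by facet-vector symbols modulo belt relations maps isomorphically onto $\Lambda(\proj_p P)\cong\mathbb Z^{d-d'}$, i.e.\ to redoing the simple-connectivity/Venkov-graph theory for the projected tiling, and then passing from integer relations to real ones. Until that is done, the inequality $\dim\left\langle\mathbf s_1,\ldots,\mathbf s_k\right\rangle\le d-d'$ is unproved, and it is exactly the part of the theorem that the rest of the paper (e.g.\ Corollary~\ref{cor:horvath} and Lemma~\ref{lem:3.3}) leans on. The overall architecture --- fibre the projection by positive width, identify the facets of $\proj_p(P)$ with the $\proj_p(F_i)$, verify the Minkowski--Venkov conditions, and treat the dimension count separately --- is the right one, but as it stands the proof has a gap precisely at its hardest point.
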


If $I$ is free for $P$, then $P+I$ has positive width along $I$. Therefore Theorem~\ref{thm:venkov_proj} has the following corollary.

\begin{cor}[\'A. Horv\'ath, \cite{Hor2007}]\label{cor:horvath}
Suppose $P$ is a $d$-dimensional parallelohedron and a segment $I$ is free for $P$. Then
$$\dim \left\langle \mathcal A_I(P) \cup \mathcal B_I(P) \right\rangle = d - 1.$$
In addition, if $\proj_I$ is a projection along $I$ onto a complementary $(d-1)$-space, then $Q = \proj_I(P)$ is a parallelohedron and
$$\Lambda(Q) = \proj_I \bigl(\mathbb Z(\mathcal A_I(P) \cup \mathcal B_I(P))\bigr).$$
\end{cor}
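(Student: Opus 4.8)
The plan is to apply Venkov's projection theorem (Theorem~\ref{thm:venkov_proj}) to the parallelohedron $P+I$ together with the one-dimensional space $p = \lin\aff I$, for which $\proj_p = \proj_I$. First I would check that $P+I$ has positive width along $p$. Writing $I = [\mathbf 0, \mathbf v]$ with $\mathbf v \neq \mathbf 0$, every point $\mathbf y\in P+I$ has the form $\mathbf y = \mathbf p_0 + t\mathbf v$ with $\mathbf p_0\in P$ and $t\in[0,1]$; hence the whole segment $\mathbf p_0 + I$ lies in $P+I$, has direction $p$, and passes through $\mathbf y$. Thus every nonempty section $(P+I)\cap(\mathbf x + p)$ is a genuine segment rather than a single point, so $P+I$ has positive width along $p$, and Theorem~\ref{thm:venkov_proj} applies with $d' = 1$.

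The combinatorial heart of the argument is to identify the facets of $P+I$ parallel to $p$ together with their standard vectors. I claim there are exactly two families. On one hand, each facet $G$ of $P$ with $G\parallel I$ gives a facet $G+I$ of $P+I$ lying in the same supporting hyperplane and carrying the facet vector $\mathbf s(G)$; these account for $\mathcal B_I(P)$. On the other hand, each $(d-2)$-face $E$ of $P$ semi-shaded by $I$ gives a ``swept'' facet $E\oplus I$ of $P+I$; by the preceding corollary such $E$ is standard, and the facet vector of $E\oplus I$ inside $P+I$ equals $\mathbf s(E)$, so these account for $\mathcal A_I(P)$. Conversely, every facet of $P+I$ parallel to $I$ is of one of these two types. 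Consequently the facet vectors of $P+I$ parallel to $p$ are, up to sign, exactly the elements of $\mathcal A_I(P)\cup\mathcal B_I(P)$, and in particular they generate $\mathbb Z(\mathcal A_I(P)\cup\mathcal B_I(P))$.

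With this identification the two conclusions follow directly. Part~1 of Theorem~\ref{thm:venkov_proj} gives $\dim\langle \mathcal A_I(P)\cup\mathcal B_I(P)\rangle = d-d' = d-1$, which is the first assertion. Part~2 gives a face-to-face tiling of the complementary $(d-1)$-space $q$ by translates of $\proj_p(P+I)$ along the lattice $\proj_p\bigl(\mathbb Z(\mathcal A_I(P)\cup\mathcal B_I(P))\bigr)$. Since $I$ has direction $p$, its projection $\proj_I(I)$ is a single point, so $\proj_I(P+I) = \proj_I(P) = Q$ up to translation. Hence $Q$ is a parallelohedron and $\Lambda(Q) = \proj_I\bigl(\mathbb Z(\mathcal A_I(P)\cup\mathcal B_I(P))\bigr)$, as required.

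I expect the second step to be the main obstacle. Establishing the precise facet structure of the Minkowski sum $P+I$ --- in particular verifying that the facet vector of each swept facet $E\oplus I$ computed inside the tiling $T(P+I)$ coincides with the standard vector $\mathbf s(E)$ computed inside $T(P)$, and likewise for $G+I$ and $\mathbf s(G)$ --- requires relating the two tilings closely enough to transport the relevant lattice translations from $P$ to $P+I$. Grishukhin's criterion (Theorem~\ref{thm:1.2}) guarantees that $P+I$ is a parallelohedron, but matching the individual facet vectors is where the real care is needed.
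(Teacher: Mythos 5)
Your proposal is correct and follows exactly the route the paper takes: the paper's entire justification is the remark that $P+I$ has positive width along $I$, so Theorem~\ref{thm:venkov_proj} applies with $d'=1$, and your write-up simply fills in the details (positive width, and the identification of the facets of $P+I$ parallel to $I$ with $\mathcal A_I(P)\cup\mathcal B_I(P)$, which is consistent with the facet-vector enumeration used later in Lemma~\ref{lem:3.3}). No discrepancy to report.
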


\section{Layering of parallelohedra with free segments}\label{sec:3}

\begin{defn}
Let $P$ be a $d$-dimensional parallelohedron and $I$ be a free segment for $P$.
Fix a vector $\mathbf e_{I} \parallel I$. Define {\it the cap of $P$ visible by $I$}, or, simply, {\it the $I$-cap of $P$} as a homogeneous $(d-1)$-dimensional complex $\pcap_{I}(P)$ consisting of all facets $F$ of $P$ satisfying the condition
$$\mathbf e_{I} \cdot \mathbf n(F) < 0$$
and all subfaces of those facets. (Obviously, each $I$ defines two caps centrally symmetric to each other.)
\end{defn}

For a parallelohedron $P$ and its free segment $I$ define
$$\mathcal C_I(P) = \{ \mathbf s(F): F \; \text{is a facet of}\; \pcap_I(P)\}.$$

\begin{lem}\label{lem:3.2}
Let $P$ be a parallelohedron and $I$ be its free segment. Then
$$\lin\aff \mathcal C_I(P) \subseteq \left\langle \mathcal A_I(P) \cup \mathcal B_I(P) \right\rangle.$$
\end{lem}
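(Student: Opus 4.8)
The plan is to reduce the inclusion to a statement about pairs of adjacent facets of the cap and then resolve each pair using the belt through their common ridge. For a finite set of vectors, $\lin\aff\mathcal C_I(P)$ is exactly the linear span of the differences $\mathbf s(F)-\mathbf s(F')$ with $F,F'$ facets of $\pcap_I(P)$. Since $\pcap_I(P)$ is the lower envelope of $P$ in the direction $\mathbf e_I$, it is the graph of a convex piecewise-linear function over $\proj_I(P)$, hence a connected pure $(d-1)$-complex whose facet-adjacency graph (adjacency meaning sharing a $(d-2)$-face both of whose incident facets lie in $\pcap_I(P)$) is connected. Telescoping along paths in this graph, it therefore suffices to prove that $\mathbf s(F)-\mathbf s(F')\in\langle\mathcal A_I(P)\cup\mathcal B_I(P)\rangle$ whenever $F,F'$ are cap facets sharing a $(d-2)$-face $E=F\cap F'$. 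Observe that both satisfy $\mathbf n(\cdot)\cdot\mathbf e_I<0$, so neither $F$ nor $F'$ is parallel to $I$.

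I would then split according to whether $\belt(E)$ has six or four facets. If $\belt(E)$ is a six-belt, then exactly three tiles of $T(P)$ contain $E$, namely $P$, $P+\mathbf s(F)$ and $P+\mathbf s(F')$; the wall between $P+\mathbf s(F)$ and $P+\mathbf s(F')$ is a translate of a belt facet $G$ with $\mathbf s(G)=\mathbf s(F')-\mathbf s(F)$, whose direction differs from those of $F$ and $F'$. By Grishukhin's criterion (Theorem \ref{thm:1.2}) the six-belt contains a facet parallel to $I$; since neither $F$ nor $F'$ is parallel to $I$, that facet must be $G$. Hence $G\parallel I$, so $\mathbf s(F)-\mathbf s(F')=-\mathbf s(G)\in\langle\mathcal B_I(P)\rangle$.

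The four-belt case is the main obstacle, because the obvious relation for the shared ridge is the useless one: $E$ is standard with $\mathbf s(E)=\mathbf s(F)+\mathbf s(F')$, a relation on the \emph{sum}, while $E$ itself, lying strictly inside the cap, is not semi-shaded, so $\mathbf s(E)\notin\mathcal A_I(P)$. The key idea is to replace the shared ridge by the opposite one. In the parallelogram $\proj_E(P)$ the four belt facets occur cyclically as $F,F',-F,-F'$, so $F$ and $-F'$ are consecutive and therefore share a $(d-2)$-face $E'=F\cap(-F')$, which belongs to the same four-belt and is thus standard with $\mathbf s(E')=\mathbf s(F)+\mathbf s(-F')=\mathbf s(F)-\mathbf s(F')$. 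It remains to see that $E'$ is semi-shaded, which I would check through normal cones: $E'\oplus I$ is a facet of $P+I$ iff the two-dimensional outer normal cone of $E'$, generated by $\mathbf n(F)$ and $-\mathbf n(F')$, meets the hyperplane $\{\mathbf m:\mathbf m\cdot\mathbf e_I=0\}$ in its relative interior. This holds precisely because $\mathbf n(F)\cdot\mathbf e_I<0$ while $-\mathbf n(F')\cdot\mathbf e_I>0$, so the two generators lie on opposite sides of that hyperplane. Thus $E'$ is semi-shaded, $\mathbf s(E')\in\mathcal A_I(P)$, and $\mathbf s(F)-\mathbf s(F')=\mathbf s(E')\in\langle\mathcal A_I(P)\rangle$.

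Combining the two cases, every difference of facet vectors of adjacent cap facets lies in $\langle\mathcal A_I(P)\cup\mathcal B_I(P)\rangle$, and the reduction of the first paragraph finishes the proof. The two delicate points are the substitution $E\rightsquigarrow E'$ in the four-belt case and the bookkeeping that in the six-belt case the $I$-parallel facet must be the third direction $G$; both become transparent once one notes that a facet of the cap can never be parallel to $I$. Finally, I would point out that this argument nowhere uses that $P$ is Voronoi, in accordance with the lemma being stated for arbitrary parallelohedra.
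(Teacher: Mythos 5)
Your proof is correct, but it follows a genuinely different route from the paper's. The paper argues by contradiction and by a global volume count: if $\mathbf s(F_1)-\mathbf s(F_2)$ lay outside $\left\langle \mathcal A_I(P)\cup\mathcal B_I(P)\right\rangle$, then for every $\lambda>0$ the lattice $\Lambda(P+\lambda I)$ would contain a fixed $d$-dimensional sublattice generated by $\mathcal A_I(P)\cup\mathcal B_I(P)\cup\{\mathbf s(F_1)-\mathbf s(F_2)\}$, forcing $\vol(P+\lambda I)$ to stay bounded by that sublattice's covolume, which is absurd as $\lambda\to\infty$. You instead localize: you reduce to adjacent cap facets via connectivity of the cap's dual graph and resolve each adjacency through the belt of the shared ridge, using Grishukhin's criterion in the six-belt case and the ``opposite ridge'' $E'=F\cap(-F')$ together with the normal-fan description of $P+I$ in the four-belt case. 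Your argument is longer and leans on the Minkowski--Venkov belt structure and normal cones, but it is constructive and proves something slightly stronger --- each difference $\mathbf s(F)-\mathbf s(F')$ over a shared ridge is a \emph{single} vector of $\mathcal B_I(P)$ or of $\mathcal A_I(P)$, not merely a combination --- whereas the paper's argument is shorter, needs no case analysis, and uses only the covolume identity for parallelohedral tilings. The delicate steps you flag (that cap facets are never parallel to $I$, that the third belt direction is genuinely distinct from those of $F$ and $F'$, and that $E'$ is semi-shaded because $\mathbf n(F)$ and $-\mathbf n(F')$ straddle the hyperplane $\mathbf m\cdot\mathbf e_I=0$) all check out, and, like the paper's proof, your argument nowhere uses that $P$ is Voronoi.
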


\begin{proof}
Let $I = [-\mathbf x, \mathbf x]$ and $\mathbf e_I = 2\mathbf x$.

The proof is by contradiction. Suppose that $F_1$ and $F_2$ are facets of $\pcap_I(P)$ and
$$\mathbf s(F_1) - \mathbf s(F_2) \notin \left\langle \mathcal A_I(P) \cup \mathcal B_I(P) \right\rangle.$$

One can easily see that for every $\lambda > 0$ the segment $\lambda I$ is free for $P$. Moreover, $F_1 + \lambda \mathbf x$ and
$F_2 + \lambda \mathbf x$ are facets of $P+\lambda I$ with facet vectors
$$ \mathbf s(F_1) + \lambda \mathbf e_I \quad \text{and} \quad \mathbf s(F_1) + \lambda \mathbf e_I $$
respectively. Hence 
$$\mathcal A_I(P) \cup \mathcal B_I(P) \cup \{ \mathbf s(F_1) - \mathbf s(F_2) \} \subset \Lambda (P+\lambda I).$$

By assumption, the lattice generated by $\mathcal A_I(P)$, $\mathcal B_I(P)$ and $\mathbf s(F_1) - \mathbf s(F_2)$ is $d$-dimensional and
does not depend on $\lambda$. Let $V$ be the fundamental volume of this lattice. Then the volume of $P+\lambda I$, which is the fundamental volume
of $\Lambda (P+\lambda I)$, is at most $V$. But as $\lambda \to \infty$, the volume of $P+\lambda I$ becomes arbitrtarily large, a contradiction.

\end{proof}

\begin{lem}\label{lem:3.3}
Let $P$ be a $d$-dimensional parallelohedron centered at $\mathbf 0$ and let $I$ be its free segment. Then the $(d-1)$-dimensional sublattice 
$$\mathbb Z(\mathcal A_I(P) \cup \mathcal B_I(P)) \subset \Lambda(P)$$
has index 1.
\end{lem}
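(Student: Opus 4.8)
The assertion that $L:=\mathbb Z(\mathcal A_I(P)\cup\mathcal B_I(P))$ has index $1$ should be read as the statement that $L=\Lambda(P)\cap H$, where $H:=\langle\mathcal A_I(P)\cup\mathcal B_I(P)\rangle$ is the spanned hyperplane (of dimension $d-1$ by Corollary~\ref{cor:horvath}); equivalently, $L$ is a saturated sublattice of $\Lambda(P)$. The inclusion $L\subseteq\Lambda(P)\cap H$ is clear, so the content is the reverse inclusion. The plan is to work with the facet vectors of $P$. Recall that they generate $\Lambda(P)$, and that they split into three classes according to the sign of $\mathbf e_I\cdot\mathbf n(F)$: the cap vectors $\mathcal C_I(P)$ (negative sign), their negatives coming from the opposite, anti-cap, facets (positive sign), and $\pm\mathcal B_I(P)$ coming from the facets parallel to $I$ (zero). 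Hence $\Lambda(P)=\mathbb Z(\mathcal C_I(P)\cup\mathcal B_I(P))$, and the heart of the argument becomes the claim that all vectors of $\mathcal C_I(P)$ are congruent to one another modulo $L$.

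To prove this congruence I would use that the facets of $\pcap_I(P)$ project homeomorphically along $I$ onto $Q=\proj_I(P)$, so they subdivide the convex polytope $Q$; consequently the graph whose vertices are the facets of $\pcap_I(P)$ and whose edges are the shared $(d-2)$-faces is connected. It therefore suffices to prove that $\mathbf s(F_1)-\mathbf s(F_2)\in L$ whenever two facets $F_1,F_2$ of $\pcap_I(P)$ share a $(d-2)$-face $E$. The belt $\belt(E)$ has four or six facets. If $\belt(E)$ is a four-belt, let $\bar F_2$ be the facet opposite to $F_2$; the $(d-2)$-face $E'=F_1\cap\bar F_2$ is standard and $\mathbf s(E')=\mathbf s(F_1)+\mathbf s(\bar F_2)=\mathbf s(F_1)-\mathbf s(F_2)$, while $E'$ is semi-shaded (its two facets $F_1$ and $\bar F_2$ are a cap and an anti-cap facet), so $\mathbf s(F_1)-\mathbf s(F_2)=\mathbf s(E')\in\mathcal A_I(P)\subseteq L$. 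If $\belt(E)$ is a six-belt, then by Theorem~\ref{thm:1.2} it contains a facet parallel to $I$; since its six facets form three antipodal pairs and each non-parallel pair supplies one cap and one anti-cap facet, the cap contributes exactly the two facets $F_1,F_2$, and the third edge-direction at $E$ is that of a facet $G\parallel I$. Reading off the three tiles meeting along $E$ then gives $\mathbf s(F_1)-\mathbf s(F_2)=\pm\mathbf s(G)\in\mathcal B_I(P)\subseteq L$.

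Granting the congruence, I would fix a facet $F_0$ of $\pcap_I(P)$, set $\mathbf v_0=\mathbf s(F_0)$, and obtain $\mathcal C_I(P)\subseteq\mathbf v_0+L$, whence $\Lambda(P)=\mathbb Z(\mathcal C_I(P)\cup\mathcal B_I(P))=\mathbb Z\mathbf v_0+L$. As $\Lambda(P)$ is $d$-dimensional while $L$ spans the hyperplane $H$, necessarily $\mathbf v_0\notin H$; hence any $\mathbf u\in\Lambda(P)\cap H$, written as $\mathbf u=k\mathbf v_0+\ell$ with $\ell\in L\subseteq H$, forces $k=0$ and $\mathbf u=\ell\in L$. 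This gives $\Lambda(P)\cap H=L$, as required.

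I expect the six-belt case of the per-adjacency step to be the main obstacle: it is exactly there that Theorem~\ref{thm:1.2} (every six-belt of a free segment carries a parallel facet) is indispensable, for otherwise a six-belt could present a cap arc of three facets whose interior adjacency would produce a facet vector with no evident reason to lie in $L$. The remaining steps --- that the facet vectors generate $\Lambda(P)$, that $\pcap_I(P)$ is facet-connected via its projection onto $Q$, and the final rank-counting argument --- are routine once this step is secured.
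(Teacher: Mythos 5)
Your proof is correct, and it takes a genuinely different route from the paper's. The paper argues globally and asymptotically: writing $\Lambda_0=\Lambda(P)\cap\left\langle\mathcal A_I(P)\cup\mathcal B_I(P)\right\rangle$, it uses Lemma~\ref{lem:3.2} to obtain $\Lambda(P+\lambda I)=\Lambda_0\oplus\mathbb Z\cdot(\mathbf t+\lambda\mathbf e_I)$ and then invokes Corollary~\ref{cor:horvath}: for large $\lambda$ the single layer $\{P+\lambda I+\mathbf u:\mathbf u\in\mathbb Z(\mathcal A_I(P)\cup\mathcal B_I(P))\}$ already covers almost all of the hyperplane $\aff\Lambda_0$, so an extra coset representative $\mathbf v\in\Lambda_0\setminus\mathbb Z(\mathcal A_I(P)\cup\mathcal B_I(P))$ would force two tiles of $T(P+\lambda I)$ to overlap. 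You instead argue locally and combinatorially: the dual graph of $\pcap_I(P)$ is connected because the cap facets subdivide $Q=\proj_I(P)$, and at each shared $(d-2)$-face the four-belt case produces the semi-shaded face $F_1\cap(-F_2)$ with standard vector $\mathbf s(F_1)-\mathbf s(F_2)$, while the six-belt case uses Theorem~\ref{thm:1.2} to identify the third belt direction as an $I$-parallel facet; both cases check out (in the six-belt case the $I$-parallel pair cannot be $\pm F_1$ or $\pm F_2$, since cap facets are never parallel to $I$). Your approach buys an integral strengthening of Lemma~\ref{lem:3.2} --- you get $\mathcal C_I(P)\subseteq\mathbf v_0+\mathbb Z(\mathcal A_I(P)\cup\mathcal B_I(P))$ rather than merely $\lin\aff\mathcal C_I(P)\subseteq\left\langle\mathcal A_I(P)\cup\mathcal B_I(P)\right\rangle$ --- and it avoids the $\lambda\to\infty$ limiting argument entirely; the paper's argument, in exchange, needs no belt-by-belt case analysis. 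The one step you should spell out is the injectivity of $\proj_I$ on the union of the closed cap facets (if two points of distinct cap facets had the same projection, the lower one would violate the supporting inequality of the upper one's facet), which is what guarantees that adjacency of cells in the induced subdivision of $Q$ really comes from a $(d-2)$-face of $P$ shared by the two cap facets.
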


\begin{proof}
Consider the sublattice $\Lambda_0 = \Lambda(P) \cap \left\langle \mathcal A_I(P) \cup \mathcal B_I(P) \right\rangle$.
It is enough to prove that
\begin{equation}\label{eq:3.1}
	\Lambda_0 = \mathbb Z(\mathcal A_I(P) \cup \mathcal B_I(P)).
\end{equation}

Assume that (\ref{eq:3.1}) does not hold.

Let $\mathbf t = \mathbf s(F)$, where $F$ is a facet of $\pcap_I(P)$. By Lemma \ref{lem:3.2},
$$\proj_{\lin \Lambda_0} (\mathbf s(F')) \in \{ \mathbf 0, \pm \mathbf t \}$$
for all $F'$ being facets of $P$. Here $\proj_{\lin \Lambda_0}$ is a projection along $\lin \Lambda_0$ onto $\mathbb R\cdot \mathbf t$.

Since the set of all facet vectors of $P$ generates $\Lambda(P)$,
$$\proj_{\lin \Lambda_0} \Lambda(P) = \mathbb Z \cdot \mathbf t.$$

Set $I = [-\mathbf x, \mathbf x]$, $\mathbf e_I = 2 \mathbf x$. Consider the tiling $T(P+\lambda I)$ for an arbitrary $\lambda > 0$. We will show that
$$\Lambda(P+\lambda I) = \Lambda_0 \oplus \mathbb Z \cdot (\mathbf t + \lambda \mathbf e_I).$$

To prove this, it is enough to check that all facet vectors of $P+\lambda I$ belong to 
$$\Lambda_0 \oplus \mathbb Z \cdot (\mathbf t + \lambda \mathbf e_I).$$
Indeed, each facet vector of $P+\lambda I$ is either from $\mathcal A_I(P)$, or from $\mathcal B_I(P)$, or it has the form
$$\pm (\mathbf s(F) + \lambda \mathbf e_I),$$
where $F$ is a facet of $P$ and the sign is chosen to be plus, if $F\in \pcap_I(P)$ and minus if $-F\in \pcap_I(P)$. In the first two cases
the facet vectors belong to $\Lambda_0$, and in the third case the facet vector belongs to $\pm(\Lambda_0 + \mathbf t + \mathbf e_I)$.

From Corollary \ref{cor:horvath} follows that for sufficiently large $\lambda$ the hyperplane $\aff \Lambda_0$ is covered, except for a lower-dimensional
subset, by interior parts of parallelohedra
$$\{ P + \lambda I + \mathbf u : \mathbf u \in \mathbb Z(\mathcal A_I(P) \cup \mathcal B_I(P)) \}.$$

Let $\mathbf v \in \Lambda_0 \setminus \mathbb Z(\mathcal A_I(P) \cup \mathcal B_I(P)$. Then the same holds for
$$\{ P + \lambda I + \mathbf v + \mathbf u : \mathbf u \in \mathbb Z(\mathcal A_I(P) \cup \mathcal B_I(P)) \}.$$
This is impossible since $T(P+\lambda I)$ is a tiling. Hence $\mathbf v$ does not exist and (\ref{eq:3.1}) holds.

\end{proof}

\begin{lem}\label{lem:3.4}
Let $P$ be a parallelohedron with a free segment $I$. Let $F$ be a facet of $P$ such that 
$$\mathbf s(F) \in \left\langle \mathcal A_I(P) \cup \mathcal B_I(P) \right\rangle.$$
Then $F$ is parallel to $I$.
\end{lem}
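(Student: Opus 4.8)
The plan is to prove the contrapositive: every facet $F$ of $P$ that is \emph{not} parallel to $I$ satisfies $\mathbf s(F) \notin \left\langle \mathcal A_I(P) \cup \mathcal B_I(P) \right\rangle$. Write $W = \left\langle \mathcal A_I(P) \cup \mathcal B_I(P) \right\rangle$ and recall from Corollary \ref{cor:horvath} that $\dim W = d-1$. Fix the direction vector $\mathbf e_I$ used to define the cap $\pcap_I(P)$, and partition the facets of $P$ into three classes according to the sign of $\mathbf e_I \cdot \mathbf n(F)$: the facets with negative sign are exactly the facets of $\pcap_I(P)$ (the cap facets), those with positive sign are the facets of the opposite cap, and those with zero sign are precisely the facets parallel to $I$. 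Thus a facet fails to be parallel to $I$ if and only if it is a cap facet or an opposite-cap facet. (If every facet of $P$ is parallel to $I$, the lemma is trivial, so assume there is a cap facet.)

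First I would show that all cap facet vectors lie in a single coset of $W$. By definition $\mathcal C_I(P)$ is the set of facet vectors of $\pcap_I(P)$, and by Lemma \ref{lem:3.2} we have $\lin\aff \mathcal C_I(P) \subseteq W$. Since $\lin\aff \mathcal C_I(P)$ consists of all linear combinations of elements of $\mathcal C_I(P)$ with coefficient sum zero, it contains every difference $\mathbf s(F_1) - \mathbf s(F_2)$ of cap facet vectors. Fixing one cap facet vector $\mathbf t$, this gives $\mathcal C_I(P) \subseteq \mathbf t + W$.

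The key step is to verify that $\mathbf t \notin W$. Suppose, on the contrary, that $\mathbf t \in W$. Then every cap facet vector lies in $\mathbf t + W = W$; every opposite-cap facet vector is the negative of a cap facet vector, by the central symmetry of $P$, and hence also lies in $W$; and every facet vector of a facet parallel to $I$ lies in $\mathcal B_I(P) \subseteq W$ by definition. Consequently all facet vectors of $P$ would lie in $W$. But the facet vectors generate the full lattice $\Lambda(P)$, which is $d$-dimensional, whereas $\dim W = d-1$, a contradiction. Hence $\mathbf t \notin W$.

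Finally I would assemble the conclusion. For any facet $F$ not parallel to $I$: if $F$ is a cap facet then $\mathbf s(F) \in \mathbf t + W$, while if $F$ is an opposite-cap facet then $\mathbf s(F) \in -\mathbf t + W$; in either case $\mathbf s(F) \notin W$, because $\mathbf t \notin W$. This establishes the contrapositive, and the lemma follows. The only mildly delicate point is the coset step, where one must invoke Lemma \ref{lem:3.2} to control the differences of cap facet vectors; everything else is a dimension count resting on the fact that the facet vectors generate $\Lambda(P)$.
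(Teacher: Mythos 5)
Your proof is correct and follows essentially the same route as the paper's: both arguments rest on Lemma~\ref{lem:3.2} to place all cap facet vectors in a single coset $\mathbf t + \left\langle \mathcal A_I(P)\cup\mathcal B_I(P)\right\rangle$, and then rule out $\mathbf t$ lying in that hyperplane because the facet vectors of $P$ span a $d$-dimensional lattice. Your contrapositive/coset phrasing is just a slightly more explicit reorganization of the paper's direct contradiction.
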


\begin{proof}
Assume the converse. Then $\mathbf s(F) \in \mathcal C_I(P)$. Thus 
$$\aff \mathcal C_I(P) \cap \aff (\mathcal A_I(P) \cup \mathcal B_I(P)) \neq \varnothing,$$
since the intersection contains $\mathbf s(F)$. Application of Lemma \ref{lem:3.2} gives
$$\left\langle \mathcal C_I(P) \right\rangle \subset \left\langle \mathcal A_I(P) \cup \mathcal B_I(P) \right\rangle.$$

This immediately gives
$$ \dim \left\langle \mathcal B_I(P) \cup \mathcal C_I(P) \right\rangle \leq d-1.$$

But $\mathcal B_I(P)$ together with $\mathcal C_I(P)$ generate a $d$-lattice $\Lambda(P)$, a contradiction.

\end{proof}

\begin{lem}\label{lem:3.5}
Let $P$ be a $d$-dimensional parallelohedron centered at $\mathbf 0$ and let $I$ be its free segment.
Choose a vector $\mathbf t$ so that 
$$\Lambda(P) = \mathbb Z(\mathcal A_I(P) \cup \mathcal B_I(P)) \oplus \mathbb Z \cdot \mathbf t.$$ 
Let $\mathbf v \in \mathbb Z(\mathcal A_I(P) \cup \mathcal B_I(P))$. Then
$$\proj_I (P\cap (P + \mathbf v + \mathbf t)) = \proj_I (P)\cap \proj_I (P + \mathbf v + \mathbf t).$$
\end{lem}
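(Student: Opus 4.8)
The inclusion $\proj_I(P\cap(P+\mathbf v+\mathbf t))\subseteq\proj_I(P)\cap\proj_I(P+\mathbf v+\mathbf t)$ holds for every linear map, so the whole content of the lemma is the reverse inclusion. The plan is to read off the tiling $T(P)$ along a single line $\ell=\proj_I^{-1}(\mathbf y)$ parallel to $I$, organise the tiles into \emph{layers}, and show that $\ell$ meets exactly one tile in each layer; the desired inclusion will then follow because $P$ and $P+\mathbf v+\mathbf t$ lie in consecutive layers and hence, if $\ell$ meets both, they must be adjacent along $\ell$.

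First I would set up the layering. Write $\Lambda_0=\mathbb Z(\mathcal A_I(P)\cup\mathcal B_I(P))$, so that $\Lambda(P)=\Lambda_0\oplus\mathbb Z\mathbf t$, and let $\lambda\colon\Lambda(P)\to\mathbb Z$ be the coordinate reading off the $\mathbf t$-component; call $\lambda(\mathbf w)$ the layer of $P+\mathbf w$. Classify the facets of $P$ by the sign of $\mathbf e_I\cdot\mathbf n(F)$: those with $\mathbf e_I\cdot\mathbf n(F)=0$ are parallel to $I$, have $\mathbf s(F)\in\mathcal B_I(P)\subset\Lambda_0$, and thus $\lambda(\mathbf s(F))=0$; the remaining facets split into the two caps. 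By Lemma~\ref{lem:3.2} all facet vectors of one cap are congruent modulo $\lin\Lambda_0$, hence, using Lemma~\ref{lem:3.3}, congruent modulo $\Lambda_0$. Since all facet vectors of $P$ generate $\Lambda(P)$, this common residue must generate $\Lambda(P)/\Lambda_0\cong\mathbb Z$; choosing the sign of $\mathbf t$ suitably, every facet vector of the upper cap satisfies $\lambda(\mathbf s(F))=+1$ and every facet vector of the lower cap satisfies $\lambda(\mathbf s(F))=-1$.

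Next I would run the line. For $\mathbf y$ outside the (measure-zero) union of the projections of all $(d-2)$-faces of $T(P)$ and of all facets parallel to $I$, the line $\ell$ meets the tiling only in relative interiors of cap facets. Parametrising $\ell$ by height $\mathbf e_I\cdot(\,\cdot\,)$, each tile meets $\ell$ in a segment, consecutive segments share an endpoint, and on passing upward from one tile to the next we exit through an upper-cap facet, whose facet vector has $\lambda=+1$. Thus the layer index increases by exactly $1$ at each crossing, so $\ell$ meets exactly one tile in each layer. The layer-$0$ tile is $P$ itself (as $\mathbf y\in\proj_I(P)$ forces $\ell\cap P\neq\varnothing$), and the layer-$1$ tile sits directly above it, sharing with $\sigma_0:=\ell\cap P$ the single point where $\ell$ leaves $P$.

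Finally I would conclude. Fix $\mathbf v\in\Lambda_0$ and a generic $\mathbf y\in\proj_I(P)\cap\proj_I(P+\mathbf v+\mathbf t)$. Then $\ell$ meets both $P$ (layer $0$) and $P+\mathbf v+\mathbf t$ (layer $1$); since $\ell$ meets a unique layer-$1$ tile, that tile must be $P+\mathbf v+\mathbf t$, which therefore shares a point with $P$ on $\ell$, giving $\mathbf y\in\proj_I(P\cap(P+\mathbf v+\mathbf t))$. Hence the reverse inclusion holds on a dense subset of the compact convex set $\proj_I(P)\cap\proj_I(P+\mathbf v+\mathbf t)$; passing to closures (the left-hand side being compact, the right-hand side being the closure of its relative interior) upgrades this to the full inclusion, and the lemma follows. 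The main obstacle is precisely the layering step: everything rests on Lemma~\ref{lem:3.2} forcing all facet vectors of a cap into a single residue class modulo $\Lambda_0$, which is exactly what makes the layer index jump by $\pm1$ and not more; the genericity-and-closure bookkeeping is routine by comparison.
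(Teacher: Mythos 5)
Your layering setup is sound and is exactly the paper's: by Lemma~\ref{lem:3.2} the facet vectors of one cap are congruent modulo $\left\langle \mathcal A_I(P)\cup\mathcal B_I(P)\right\rangle$, by Lemma~\ref{lem:3.3} therefore modulo $\Lambda_0=\mathbb Z(\mathcal A_I(P)\cup\mathcal B_I(P))$, and since facet vectors generate $\Lambda(P)$ the common residue is $\pm\mathbf t$. The fiberwise argument over a generic $\mathbf y$ (layer index jumps by exactly $1$ at each crossing, hence one tile per layer on each generic fiber) is a correct pointwise version of what the paper extracts from the interface complex $\mathcal K$.

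The gap is in the step you dismiss as routine bookkeeping. Your argument produces the reverse inclusion only on the set of generic points of $\proj_I(P)\cap\proj_I(P+\mathbf v+\mathbf t)$, and the closure step requires that set to be dense in the intersection. That fails precisely when the intersection has dimension at most $d-2$, because then it can lie entirely inside your excluded set. This case is real and carries content: take $P=[0,1]^3$, $I$ vertical, $\mathbf v=(1,0,0)$, $\mathbf t=(0,0,1)$; then $\proj_I(P)\cap\proj_I(P+\mathbf v+\mathbf t)$ is an edge of the square $\proj_I(P)$, every point of which lies in the projection of a facet parallel to $I$, so your generic set meets it in the empty set and the closure argument yields nothing --- yet the lemma asserts a nontrivial equality there (both sides equal that edge). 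A limiting argument with generic $\mathbf y_n\to\mathbf y$ does not repair this, since the $\mathbf y_n$ cannot be chosen in $\proj_I(P+\mathbf v+\mathbf t)$, so the unique layer-$1$ tile over $\ell_n$ need not converge to $P+\mathbf v+\mathbf t$. The paper avoids the issue entirely: it applies Alexandrov's tiling theorem to the complex spanned by the translated caps to conclude that $\proj_I$ restricted to $|\mathcal K|$ is a homeomorphism onto the whole hyperplane (not just over generic fibers). Injectivity of $\proj_I$ on $|\mathcal K|$ then gives $\proj_I(A\cap B)=\proj_I(A)\cap\proj_I(B)$ for arbitrary subsets $A,B\subseteq|\mathcal K|$, and combined with $P\cap P'=(P\cap|\mathcal K|)\cap(P'\cap|\mathcal K|)$ and $\proj_I(P'\cap|\mathcal K|)=\proj_I(P')$ this settles all cases, degenerate or not. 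You need some such global statement that the interface is a graph over the hyperplane everywhere; your argument only establishes it over generic points.
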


\begin{proof}
From Lemma \ref{lem:3.3} immediately follows that 
$$\mathcal C_I(P) \subset \pm \mathbf t + \mathbb Z(\mathcal A_I(P) \cup \mathcal B_I(P)).$$
Without loss of generality assume that the sign is ``+''.

Consider a homogeneous $(d-1)$-dimensional complex $\mathcal K$, all faces of which are faces of $T(P)$, and satisfying
$$|\mathcal K| = \bigcup\limits_{\mathbf v} (\pcap_I(P) + \mathbf v), $$
where $\mathbf v$ runs through the lattice $\mathbb Z(\mathcal A_I(P) \cup \mathcal B_I(P))$ and $|\mathcal K|$ denotes the support of $\mathcal K$.
Informally, $\mathcal K$ splits two layers
\begin{multline}\label{eq:3.2}
\mathcal L_0 = \{P+\mathbf v: \mathbf v\in \mathbb Z(\mathcal A_I(P) \cup \mathcal B_I(P))\} \quad \text{and} \\  
\mathcal L_1 = \{P+\mathbf t + \mathbf v: \mathbf v\in \mathbb Z(\mathcal A_I(P) \cup \mathcal B_I(P))\}.
\end{multline}

$\mathcal K$ has the following properties.
\begin{enumerate}
	\item The projection $\proj_I$ onto a hyperplane $\Pi$ transversal to $I$ is a homeomorphism between $|\mathcal K|$ and $\Pi$.
	\item $|\mathcal K| = \left(\bigcup\limits_{P'\in \mathcal L_0} P' \right) \bigcap \left(\bigcup\limits_{P'\in \mathcal L_1} P' \right)$.
	\item $\proj_I (P' \cap |\mathcal K|) = \proj_I(P')$ for every $P' \in \mathcal L_0 \cup \mathcal L_1$.
\end{enumerate}

Statement 1 follows from A.~D.~Alexandrov's tiling theorem~\cite{Ale1954}. We apply it to the complex spanned by polytopes
$$\{ F + \mathbf v : F \in \pcap_I(P), \; \mathbf v \in \mathbb Z(\mathcal A_I(P) \cup \mathcal B_I(P))\}.$$
One can easily check that the set of $(d-1)$-polytopes above locally forms a local tiling around each face of dimension $(d-3)$. Hence this
set is a tiling of an affine $(d-1)$-space.

Therefore each line parallel to $I$ is split by $|\mathcal K|$ into two rays, say, the lower and the upper one
with respect to some fixed orientation of $I$. We will call the union of all lower closed rays {\it the lower part of $\mathbb R^d$} and 
the union of all upper closed rays {\it the upper part of $\mathbb R^d$} with respect to $|\mathcal K|$.

To prove statement 2 notice that all parallelohedra of $\mathcal L_0$ lie in one (say, lower) part of $\mathbb R^d$, respectively, 
all parallelohedra of $\mathcal L_1$ lie in the upper part. Thus the intersection is contained in the intersection of lower and upper parts, i.e.
in $|\mathcal K|$. On the other hand, every point of $|\mathcal K|$ is an intersection of some parallelohedron from $\mathcal L_0$ and 
some parallelohedron from $\mathcal L_1$.

Statement 3 is an immediate corollary of definitions of a cap and $\mathcal K$.

In the notation of Lemma \ref{lem:3.5}, let $P' = P + \mathbf v + \mathbf t$. Thus 
$$P\in \mathcal L_0 \quad \text{and} \quad P'\in \mathcal L_1.$$

From statement 2 follows that
$$P \cap P' = (P\cap |\mathcal K|) \cap (P' \cap |\mathcal K|).$$

Since $\proj_I$ is a homeomorphism of $|\mathcal K|$, one has
\begin{multline*}
\proj_I (P \cap P') = \proj_I \bigl((P\cap |\mathcal K|) \cap (P' \cap |\mathcal K|)\bigr) = \\
\proj_I (P\cap |\mathcal K|) \cap \proj_I (P' \cap |\mathcal K|) = \proj_I (P) \cap \proj_I(P').
\end{multline*}

The last identity is due to statement 3.

\end{proof}

\section{Free segments and Voronoi's Conjecture}

The following two theorems stated by V.~Grishukhin characterize when the Minkowski sum $P+I$ of a Voronoi parallelohedron $P$ and a segment $I$ is a
Voronoi parallelohedron in some Euclidean metric, probably, distinct from the respective metric for $P$.

\begin{thm}[V.~Grishukhin, \cite{Gri2006}]\label{thm:4.1}
Let $P$ and $P+I$ be parallelohedra. Suppose that $P$ is Voronoi and irreducible. Then Voronoi's Conjecture holds for $P+I$
iff $I\,\bot\, \mathbf s(F)$ for every standard $(d-2)$-face $F$ such that $\mathbf s(F) \in \mathcal A_I(P)$. The orthogonality
$\bot$ is related to the Euclidean metric that makes $P$ Voronoi. 
\end{thm}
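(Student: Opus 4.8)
The plan is to deduce both implications from a single metric criterion for the Voronoi property and to localise the use of irreducibility in the necessity half. Throughout, $\cdot$ and $\bot$ denote the inner product and orthogonality of the metric making $P$ Voronoi; $I=[-\mathbf x,\mathbf x]$, $\mathbf e_I=2\mathbf x$, $\mathbf u=\mathbf e_I/|\mathbf e_I|$, and $W=\mathbf u^{\bot}$.

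\emph{A criterion.} I would first record that a parallelohedron $R$ centred at $\mathbf 0$ with lattice $\Lambda$ is Voronoi for a positive definite form $Q(\mathbf y,\mathbf z)=A\mathbf y\cdot\mathbf z$ (with $A$ symmetric positive definite) if and only if $\mathbf s(G)$ is $Q$-orthogonal to $\lin\aff G$ for every facet $G$ of $R$. The sufficiency is quick: since $G$ is centred at $\mathbf s(G)/2$ (Property 5), $Q$-orthogonality places $G$ on the $Q$-perpendicular bisector of $[\mathbf 0,\mathbf s(G)]$, so $R=\bigcap_G\{\mathbf y:Q(\mathbf y,\mathbf y)\le Q(\mathbf y-\mathbf s(G),\mathbf y-\mathbf s(G))\}\supseteq V_{\mathbf 0}$, the $Q$-Voronoi cell of $\mathbf 0$; as both are fundamental domains of $\Lambda$ they have equal volume and hence coincide. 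I also record two facts about $P$: being Voronoi, $\mathbf s(G)\bot\lin\aff G$ for every facet $G$, and for every standard $(d-2)$-face $F$ the vector $\mathbf s(F)$ is a diagonal of the rectangle $D(F)$, so by Voronoi--Delaunay duality $\mathbf s(F)\bot\lin\aff F$. Finally, a direct Minkowski-sum computation shows the facets of $P+I$ are: the $F\oplus I$ with $\mathbf s(F)\in\mathcal A_I(P)$ and the $G+I$ with $\mathbf s(G)\in\mathcal B_I(P)$ (both containing the direction $\mathbf u$, with facet vectors $\mathbf s(F)$, resp.\ $\mathbf s(G)$), together with translates of the cap and anticap facets $G$, with facet vectors $\mathbf s(G)-\mathbf e_I$ and $\mathbf s(G)+\mathbf e_I$ (which do \emph{not} contain $\mathbf u$); by Lemma~\ref{lem:3.3} every vector of $\mathcal C_I(P)$ is congruent modulo $\mathbb Z(\mathcal A_I(P)\cup\mathcal B_I(P))$ to a single $\mathbf t$.

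\emph{Sufficiency.} Assume $\mathbf s(F)\bot\mathbf u$ for all $\mathbf s(F)\in\mathcal A_I(P)$. Then $\mathcal A_I(P)\cup\mathcal B_I(P)\subseteq W$, and since $\dim\langle\mathcal A_I(P)\cup\mathcal B_I(P)\rangle=d-1=\dim W$ by Corollary~\ref{cor:horvath} we get $\mathbb Z(\mathcal A_I(P)\cup\mathcal B_I(P))\subseteq W$, so every cap facet vector has the \emph{same} component $\sigma:=\mathbf t\cdot\mathbf u<0$ along $\mathbf u$. I would define $Q$ by the $\bot$-orthogonal splitting $\mathbb E^d=W\oplus\mathbb R\mathbf u$, letting $Q$ agree with the original inner product on $W$ and setting $Q(\mathbf u,\mathbf u)=\lambda$; thus $A=\mathrm{id}_W\oplus\lambda\,\mathrm{id}_{\mathbb R\mathbf u}$. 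For the facets $F\oplus I$ and $G+I$ the facet vector lies in $W$ and is original-orthogonal both to $\mathbf u$ and to $\lin\aff F$ (resp.\ $\lin\aff G$), hence $Q$-orthogonal to the whole facet. For a cap facet, writing $\mathbf s(G)=\mathbf w+\sigma\mathbf u$ with $\mathbf w\in W$, $Q$-orthogonality of $\mathbf s(G)-\mathbf e_I$ to $\lin\aff G$ means $A(\mathbf s(G)-\mathbf e_I)=\mathbf w+\lambda(\sigma-|\mathbf e_I|)\mathbf u$ is original-parallel to $\mathbf s(G)$, i.e.\ $\lambda=\sigma/(\sigma-|\mathbf e_I|)$; as $\sigma<0<|\mathbf e_I|$ this is one value in $(0,1)$ serving every cap (and, by central symmetry, anticap) facet, while the facets with $\mathbf w=\mathbf 0$ impose no condition. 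Hence all facet vectors are $Q$-orthogonal to their facets and $P+I$ is Voronoi for $Q$. Note that irreducibility was not used.

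\emph{Necessity, and the main difficulty.} Conversely, let $P+I$ be Voronoi for $Q$ with operator $A$. Every facet $F\oplus I$ and $G+I$ contains $\mathbf u$, so its facet vector is $Q$-orthogonal to $\mathbf u$, giving $A(\mathcal A_I(P)\cup\mathcal B_I(P))\subseteq W$ and hence $A(H)=W$ for $H:=\langle\mathcal A_I(P)\cup\mathcal B_I(P)\rangle$. The assertion $\mathbf s(F)\bot\mathbf u$ for all $\mathbf s(F)\in\mathcal A_I(P)$ is equivalent to $\mathcal A_I(P)\subseteq W$, hence to $H=W$, and since $A(H)=W$ with $A$ invertible this is in turn equivalent to $A(W)=W$, i.e.\ to $\mathbf u$ being an eigenvector of $A$. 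Each vector of $\mathcal B_I(P)$ is already an eigenvector of $A$ (both it and its $A$-image are orthogonal to the hyperplane $\lin\aff G$), so if $\langle\mathcal B_I(P)\rangle=W$ then $W$ is $A$-invariant and we are done. The remaining case $\langle\mathcal B_I(P)\rangle\subsetneq W$ is the crux, and the step I expect to be the main obstacle: here the directions contributed by $\mathcal A_I(P)$ are not a priori eigenvectors, and $A$ might rotate $W$. This is exactly where irreducibility of $P$ should enter — and where it is indispensable, since the sufficiency argument avoided it, and for reducible $P$ one may rescale the direct summands independently and produce Voronoi extensions with $\mathbf s(F)\not\bot\mathbf u$. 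I would exploit propagation across six-belts: in a six-belt the three facet vectors satisfy $\mathbf v_1+\mathbf v_2+\mathbf v_3=\mathbf 0$ after orientation, so eigenvectors among them must share one eigenvalue; chaining these equalities along the six-belts linking the facets parallel to $\mathbf u$, and combining them with the relations $A(\mathbf s(G)\mp\mathbf e_I)\parallel\mathbf s(G)$ from the cap facets, should force $A$ to be scalar on $W$ and to fix $\mathbf u$, with irreducibility (via the colouring of Theorem~\ref{thm:ordine_reducibility}) guaranteeing the connectivity that makes the chain reach every facet. Carrying this propagation through the belts newly created in $P+I$ — in particular those spanned by the $(d-2)$-faces $E\oplus I$ with $E$ a $(d-3)$-face of $P$ — is the technical heart of the argument, handled by the case analysis of~\cite{DGM2013}.
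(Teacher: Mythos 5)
First, there is nothing in the paper to compare your proof against: Theorem~\ref{thm:4.1} is imported from \cite{Gri2006}, and the paper only remarks that the complete argument (repairing the treatment of the belts spanned by faces $E\oplus I$) is given in \cite{DGM2013}. Judged on its own, your ``if'' direction is complete and correct: the bisector criterion, the inventory of facets of $P+I$ with their facet vectors, the fact (via Lemma~\ref{lem:3.3}) that all cap facet vectors share one component $\sigma<0$ along $\mathbf u$, and the choice $A=\mathrm{id}_W\oplus\lambda\,\mathrm{id}_{\mathbb R\mathbf u}$ with $\lambda=\sigma/(\sigma-|\mathbf e_I|)\in(0,1)$ all check out, and you are right that irreducibility is not used there. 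The genuine gap is in the ``only if'' direction. Your reduction to ``$\mathbf u$ is an eigenvector of $A$'' is correct, and the case $\left\langle\mathcal B_I(P)\right\rangle=W$ is handled; but the complementary case --- precisely the one not already covered by Corollary~\ref{cor:4.3}, i.e.\ the only case in which the theorem says something new --- is left as a hope (``should force $A$ to be scalar on $W$ and to fix $\mathbf u$''), with the decisive computation deferred to an unspecified case analysis of belts of $P+I$. As written, the necessity half is not proved.

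The strategy you sketch does close, and more simply than you expect: no belt of $P+I$ need be examined, only belts of $P$. Each facet $G$ of $P$ not parallel to $I$ gives $A(\mathbf s(G)\mp\mathbf e_I)=c_G\,\mathbf s(G)$ with $c_G\neq 0$, and each $I$-parallel facet gives $A\,\mathbf s(G)=c_G\,\mathbf s(G)$. By Theorem~\ref{thm:1.2} every six-belt of $P$ contains an $I$-parallel facet; since the three facet vectors satisfy $\mathbf v_3=\mathbf v_1-\mathbf v_2$, either all three are $\Omega$-orthogonal to $\mathbf u$ (then your eigenvalue argument applies), or exactly one is, and then the other two lie in the same cap, so subtracting their two relations cancels the $A\mathbf e_I$ terms and yields $c_{G_1}=c_{G_2}=c_{G_3}$. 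Irreducibility enters only here, via Theorem~\ref{thm:ordine_reducibility}: the constants $c_G$ agree on every six-belt and on opposite facets, hence $c_G\equiv c$ for all facets of $P$. Consequently $\left\langle\mathcal B_I(P)\right\rangle+\lin\aff\mathcal C_I(P)$, which has dimension at least $d-1$ because $\mathcal B_I(P)\cup\mathcal C_I(P)$ generates $\Lambda(P)$, lies in the $c$-eigenspace of $A$. If that eigenspace is all of $\mathbb E^d$, then $A$ is scalar and the claim is immediate. Otherwise the eigenspace is a hyperplane $V$ with an $\Omega$-orthogonal eigenline; decomposing the identity $A\,\mathbf s(G)=c\,\mathbf s(G)+A\mathbf e_I$ accordingly, its $V$-component reads $c\cdot(\text{$V$-component of }\mathbf e_I)=\mathbf 0$, so $\mathbf e_I$ is an eigenvector, $V=W$, and $H=A^{-1}(W)=W$, which is exactly $\mathcal A_I(P)\subseteq W$. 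Replacing the last paragraph of your necessity argument by this computation would make the proof complete.
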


\begin{thm}[V.~Grishukhin, \cite{Gri2006}]\label{thm:4.2}
Let $P$ and $P+I$ be parallelohedra. Suppose that $P$ is Voronoi and reducible so that $P =  P_1\oplus P_2$. Define
$$I_1 = \proj_{\lin\aff P_2}(I), \qquad I_2 = \proj_{\lin\aff P_1}(I),$$
assuming that $\proj_{\lin\aff P_2}$ is a projection along $\lin\aff P_2$ onto $\aff P_1$ and similarly for $\proj_{\lin\aff P_1}$. Then
\begin{enumerate}
	\item[\rm 1.] $P_1+I_1$ and $P_2+I_2$ are parallelohedra.
	\item[\rm 2.] Voronoi's Conjecture holds for $P+I$ iff it holds for both $P_1+I_1$ and $P_2+I_2$.
\end{enumerate}
\end{thm}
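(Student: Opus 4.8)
The plan is to split everything orthogonally along the reduction of $P$ and then reduce the canonical scaling of $P+I$ to those of the two factors. First, since $P$ is Voronoi and $P=P_1\oplus P_2$, \cite{Gav2013} gives that $P_1,P_2$ are Voronoi, and the Voronoi form of $P$ is block-diagonal with respect to $\lin\aff P_1\oplus\lin\aff P_2$; so I may assume $\lin\aff P_1\perp\lin\aff P_2$ and write $\mathbf w=\mathbf w_1+\mathbf w_2$ for the vector spanning $I$, with $\mathbf w_i\in\lin\aff P_i$, so that $I_i$ is the segment spanned by $\mathbf w_i$. For part 1 I would use Grishukhin's criterion (Theorem~\ref{thm:1.2}) together with the fact that every six-belt of $P_1\oplus P_2$ is carried either by a face $E_1\oplus P_2$ over a six-belt of $P_1$ or by $P_1\oplus E_2$ over a six-belt of $P_2$ --- the mixed faces $G_1\oplus G_2$ with $G_i$ facets span only four-belts. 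Because $G_1\oplus P_2\parallel I$ iff $G_1\parallel I_1$, the hypothesis ``every six-belt of $P$ meets a facet parallel to $I$'' splits into the same statement for $P_1,I_1$ and for $P_2,I_2$; Theorem~\ref{thm:1.2} then gives that $I_i$ is free for $P_i$, i.e. $P_i+I_i$ is a parallelohedron. (Alternatively $\proj_{\lin\aff P_2}(P+I)=P_1+I_1$, and Theorem~\ref{thm:venkov_proj} confirms this.)

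For part 2 I would classify the semi-shaded $(d-2)$-faces of $P$ by the three types of $(d-2)$-face of a direct sum. The faces $E_1\oplus P_2$ and $P_1\oplus E_2$ give ``pure'' standard vectors lying in $\lin\aff P_1$ resp. $\lin\aff P_2$, reproducing $\mathcal A_{I_1}(P_1)$ and $\mathcal A_{I_2}(P_2)$; the faces $G_1\oplus G_2$ whose bounding facets $G_1\oplus P_2$ and $P_1\oplus G_2$ lie on opposite sides of the shadow boundary determined by $\mathbf w$ give ``diagonal'' standard vectors of mixed form $\pm\mathbf s(G_1)\pm\mathbf s(G_2)$. It is exactly these diagonal faces that prevent $P+I$ from being a direct sum and that couple the two factors, so the whole difficulty is concentrated there. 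I would then express Voronoi-ness through existence of a positive canonical scaling and separate the six-belts of $P+I$ into those inherited from six-belts of $P$ --- each living, by the colouring above, entirely over one factor --- and the ``vertical'' belts carried by $(d-2)$-faces $E\oplus I$ with $E$ a $(d-3)$-face of $P$.

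For the forward implication I would restrict a canonical scaling of $P+I$ to the facets lying over a single factor and push it through $\proj_{\lin\aff P_2}$ (resp. $\proj_{\lin\aff P_1}$); the inherited belts restrict to the belt relations of $P_1+I_1$ (resp. $P_2+I_2$), producing a canonical scaling there, hence each factor is Voronoi. For the converse I would glue canonical scalings $\omega^{(1)},\omega^{(2)}$ of the two factors, assigning to each facet of $P+I$ over factor $i$ the value of $\omega^{(i)}$ and to each diagonal facet the value forced by consistency, after rescaling $\omega^{(2)}$ by one positive constant. The computation that makes the gluing work is this: once $P_i+I_i$ is Voronoi in a metric $Q_i'$, every side facet $E\oplus I_i$ and every facet parallel to $I_i$ contains $\mathbf w_i$ and so is $Q_i'$-orthogonal to its facet vector, whence $\langle\mathcal A_{I_i}(P_i)\cup\mathcal B_{I_i}(P_i)\rangle\perp_{Q_i'}\mathbf w_i$; by Lemma~\ref{lem:3.2} any two cap-facet vectors of $P_i+I_i$ differ by an element of this span, so the products $\langle\mathbf w_i,\mathbf s(G)\rangle_{Q_i'}$ over cap facets $G$ are all equal. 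This constancy is precisely what lets a single rescaling align the two factor scalings on every diagonal facet.

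I expect the main obstacle to be the vertical six-belts carried by the faces $E\oplus I$. These are exactly the belts whose omission left the arguments of \cite{Gri2004,Gri2006} incomplete (see the remark after Theorem~\ref{thm:1.2} and Lemma~\ref{lem:cases}), and when $E$ is a mixed face $E_1\oplus G_2$ or $G_1\oplus E_2$ they involve both pure and diagonal facet vectors and therefore genuinely mix the two factors. The crux is to run the Delaunay-dual case analysis of Figure~\ref{duals} on each such belt and verify that the glued scaling satisfies its relation; here the orthogonality $\lin\aff P_1\perp\lin\aff P_2$ together with the constancy just established should force every mixed relation to collapse to an already-satisfied factor relation. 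Once all belts are checked, the resulting positive canonical scaling certifies that $P+I$ is Voronoi, which closes the equivalence.
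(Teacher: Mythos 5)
The paper does not prove Theorem~\ref{thm:4.2}: it is quoted as a result of Grishukhin from \cite{Gri2006} and used as a black box, so there is no internal proof to compare your argument against. Judged on its own, your treatment of part~1 is sound and essentially complete: six-belts of $P_1\oplus P_2$ are exactly those inherited from six-belts of the factors (the mixed $(d-2)$-faces $G_1\oplus G_2$ span four-belts), and $G_1\oplus P_2\parallel I$ iff $G_1\parallel I_1$, so Theorem~\ref{thm:1.2} splits cleanly.

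Part~2, however, has a genuine gap: the entire content of the hard (``if'') direction is deferred. You define the glued scaling on a diagonal facet $(G_1\oplus G_2)\oplus I$ as ``the value forced by consistency,'' but such a facet lies in several belts of $T(P+I)$ simultaneously, and well-definedness of that value is exactly what must be proved; likewise the claim that every mixed belt relation ``should force'' a collapse to an already-satisfied factor relation is stated as an expectation, not verified. The constancy computation you give (that $\langle\mathbf w_i,\mathbf s(G)\rangle_{Q_i'}$ is constant over a cap, via Lemma~\ref{lem:3.2} and the $Q_i'$-orthogonality of $\mathbf w_i$ to $\langle\mathcal A_{I_i}(P_i)\cup\mathcal B_{I_i}(P_i)\rangle$) is correct and is indeed the right ingredient, but it does not by itself discharge the vertical belts over mixed $(d-3)$-faces $E_1\oplus G_2$ and $G_1\oplus E_2$, which you yourself identify as the crux. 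The forward direction is also shakier than you suggest: a diagonal facet $(G_1\oplus G_2)\oplus I$ is in general not parallel to $\lin\aff P_2$ and so does not project to a facet of $P_1+I_1$, yet it participates in belts together with facets that do project; hence ``restrict the canonical scaling and push it through $\proj_{\lin\aff P_2}$'' does not obviously yield a canonical scaling of $T(P_1+I_1)$ without an argument eliminating the diagonal contributions from the projected belt relations. Until those verifications are actually carried out, the proposal is a credible plan rather than a proof.
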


Theorem~\ref{thm:4.1} has an important consequence. A very similar statement can be found without a proof in \cite[Theorem~3.18]{Veg2006}.

\begin{cor}\label{cor:4.3} Suppose that $P$ and $P+I$ are parallelohedra and $P$ is Voronoi. Then Voronoi's Conjecture for $P+I$ holds if
\begin{equation}\label{eq:4.1}
\dim \left\langle \mathcal B_I(P) \right\rangle = d - 1.
\end{equation} 

\end{cor}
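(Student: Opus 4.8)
The plan is to induct on $d=\dim P$, handling the irreducible and reducible cases separately and feeding them into Theorems~\ref{thm:4.1} and~\ref{thm:4.2}. The single fact tying everything together is that, since $P$ is Voronoi, each facet vector $\mathbf s(F)$ is orthogonal to its facet $F$; hence a facet $F$ is parallel to $I$ precisely when $\mathbf s(F)\bot I$, so $\mathcal B_I(P)\subset\mathbf e_I^{\bot}$ and therefore $\langle\mathcal B_I(P)\rangle\subseteq\mathbf e_I^{\bot}$.

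I would first settle the \emph{irreducible} case. Corollary~\ref{cor:horvath} always gives $\dim\langle\mathcal A_I(P)\cup\mathcal B_I(P)\rangle=d-1$, so under the hypothesis $\dim\langle\mathcal B_I(P)\rangle=d-1$ the two spans coincide, $\langle\mathcal A_I(P)\cup\mathcal B_I(P)\rangle=\langle\mathcal B_I(P)\rangle$. As $\langle\mathcal B_I(P)\rangle\subseteq\mathbf e_I^{\bot}$ and both are $(d-1)$-dimensional, in fact $\langle\mathcal B_I(P)\rangle=\mathbf e_I^{\bot}$, whence $\mathcal A_I(P)\subseteq\mathbf e_I^{\bot}$. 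Thus $\mathbf s(F)\bot I$ for every $\mathbf s(F)\in\mathcal A_I(P)$, which is exactly the criterion of Theorem~\ref{thm:4.1}; that theorem then yields that $P+I$ is Voronoi.

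For the \emph{reducible} case I would write $P=P_1\oplus P_2$, where both $P_i$ are Voronoi of dimension $d_i<d$ with $d_1+d_2=d$. Each facet of $P$ has the form $G_1\oplus P_2$ or $P_1\oplus G_2$, it is parallel to $I$ iff the corresponding factor-facet is parallel to $I_i=\proj_{\lin\aff P_{3-i}}(I)$, and its facet vector lies in $\lin\aff P_i$. Consequently $\mathcal B_I(P)$ splits as $\mathcal B_1\sqcup\mathcal B_2$ with $\mathcal B_i=\mathcal B_I(P)\cap\lin\aff P_i$, giving $\dim\langle\mathcal B_I(P)\rangle=\dim\langle\mathcal B_1\rangle+\dim\langle\mathcal B_2\rangle$. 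If $\mathbf e_I\notin\lin\aff P_{3-i}$, then $I_i$ is a genuine free segment and Corollary~\ref{cor:horvath} applied to $P_i$ forces $\dim\langle\mathcal B_i\rangle\le d_i-1$; if $\mathbf e_I\in\lin\aff P_{3-i}$, then every facet coming from the $i$-th factor is parallel to $I$ and $\dim\langle\mathcal B_i\rangle=d_i$. Because $\mathbf e_I\neq\mathbf 0$ cannot lie in both complementary spaces at once, the only way the two dimensions can sum to $d-1$ is that $I$ is parallel to exactly one summand, say $\lin\aff P_1$, with $\dim\langle\mathcal B_1\rangle=d_1-1$. Then $I_1=I$, $I_2$ is a point, $P+I=(P_1+I)\oplus P_2$, the hypothesis of the corollary descends to $P_1$, the induction hypothesis makes $P_1+I$ Voronoi, and $P_2+I_2=P_2$ is Voronoi; Theorem~\ref{thm:4.2} then gives that $P+I$ is Voronoi. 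Since the irreducible case requires no recursion, the induction is well founded.

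The step I expect to require the most care is the decomposition of $\mathcal B_I(P)$ in the reducible case: one must check that the facets of $P=P_1\oplus P_2$ parallel to $I$ are exactly those induced by factor-facets parallel to the projected segments $I_i$, that the associated facet vectors split cleanly between $\lin\aff P_1$ and $\lin\aff P_2$, and that the degenerate case, when some $I_i$ collapses to a point, is handled so that Theorem~\ref{thm:4.2} still applies. Once this is in place, the dimension count forcing $I$ to be parallel to a single summand is immediate, and the argument closes.
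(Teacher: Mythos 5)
Your proof is correct and its core is exactly the paper's argument: Corollary~\ref{cor:horvath} forces $\mathcal A_I(P)\subset\langle\mathcal B_I(P)\rangle$, the Voronoi property gives $\langle\mathcal B_I(P)\rangle\subseteq\mathbf e_I^{\bot}$ and hence $I\,\bot\,\mathcal A_I(P)$, and Theorem~\ref{thm:4.1} concludes. The one place you go beyond the paper is the reducible case: the paper simply says the claim ``immediately follows from Theorem~\ref{thm:4.1}'' even though that theorem is stated only for irreducible $P$, whereas you supply the missing reduction explicitly --- splitting $\mathcal B_I(P)$ along the direct summands, using the dimension count to force $I$ to be parallel to a single irreducible factor, and closing with Theorem~\ref{thm:4.2} and induction. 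That addition is sound and makes the deduction airtight where the paper is terse.
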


\begin{proof}
From (\ref{eq:4.1}) and Corollary \ref{cor:horvath} follows that 
$$\mathcal A_I(P) \subset \left\langle \mathcal B_I(P) \right\rangle.$$

For the rest of the proof the orthogonality is related to the Euclidean metric that makes $P$ Voronoi.

$I$ is orthogonal to $\left\langle \mathcal B_I(P) \right\rangle$. Thus, in particular, $I$ is orthogonal to each vector of $\mathcal A_I(P)$.
Now Corollary \ref{cor:4.3} immediately follows from Theorem \ref{thm:4.1}.

\end{proof}

By combining Corollary~\ref{cor:4.3} and Theorem~\ref{thm:4.2}, we can give an equivalent restatement of Theorem~\ref{thm:voronoi_for_sum} as follows.
The proof of both Theorems~\ref{thm:voronoi_for_sum}~and~\ref{thm:reducibility_for_twodim_free} will be given later following the way explained in
Section~\ref{sec:6}.

\begin{thm}\label{thm:reducibility_for_twodim_free}
If a Voronoi parallelohedron $P$ has a 2-dimensional free space, then $P$ is reducible.
\end{thm}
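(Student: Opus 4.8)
The plan is to distill the hypothesis into the purely combinatorial statement that every six-belt of $P$ contains a facet whose facet vector is orthogonal to the whole plane $p$, to upgrade this to a perfect free space $p^{\star}\supseteq p$, and then to manufacture a cross for $P$, after which Theorem \ref{thm:cross} closes the argument.

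First I would exploit that all directions of $p$ are free simultaneously. Normalize $P$ to be Voronoi in the standard metric and centered at $\mathbf 0$, and fix a six-belt with facet vectors $\mathbf s_1,\mathbf s_2,\mathbf s_3$. For each $i$ the set $\{\,I\parallel p:\mathbf s_i\,\bot\,I\,\}$ equals all of $p$ when $\mathbf s_i\in p^{\bot}$ and is a $1$-dimensional subspace of $p$ otherwise (the kernel of the nonzero functional $I\mapsto\mathbf s_i\cdot I$ on $p$). Since $p$ is a free space, the Voronoi freeness criterion recorded after Corollary \ref{cor:horvath} forces, for every $I\parallel p$, some $\mathbf s_i\,\bot\,I$; hence these three sets cover $p$. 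A two-dimensional real vector space is not a union of finitely many proper subspaces, so at least one $\mathbf s_i$ lies in $p^{\bot}$. Thus every six-belt of $P$ contains a facet with facet vector in $p^{\bot}$. Collecting all facets $F_1,\dots,F_k$ with $\mathbf s(F_j)\in p^{\bot}$, the Voronoi criterion after Corollary \ref{cor:horvath} shows that $p^{\star}:=\langle \mathbf s(F_1),\dots,\mathbf s(F_k)\rangle^{\bot}$ is a perfect free space, and $p\subseteq p^{\star}$ since each $\mathbf s(F_j)$ is orthogonal to $p$.

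Next I would try to convert this into a cross. Choosing independent $I_1,I_2\parallel p$ and setting $\Pi_r=I_r^{\bot}$, both hyperplanes contain $p^{\bot}$, so every facet vector lying in $p^{\bot}$ automatically lies in $\Pi_1\cap\Pi_2$. What remains is to place each facet vector $\mathbf s(F)\notin p^{\bot}$ into $\Pi_1\cup\Pi_2$, i.e. to show that its orthogonal projection onto $p$ points along one of two fixed lines; then $I_1,I_2$ can be taken orthogonal to those two lines inside $p$, and $(\Pi_1,\Pi_2)$ becomes a cross for $P$. The local mechanism is that the three facet vectors of a six-belt satisfy a linear relation with unit coefficients, so once one of their projections to $p$ vanishes the other two are proportional and share a single line of $p$; the task is to propagate this local single-line phenomenon to a global bound of two lines.

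The hard part is exactly this global control, and here bare six-belt combinatorics is insufficient: a single facet with $\mathbf s(F)\in p^{\bot}$ belongs to many six-belts whose remaining facets may project to different lines of $p$, so such facets act as hubs that could a priori link arbitrarily many directions. To break this I would pass from abstract belts to the Voronoi/Delaunay structure, reading off the admissible local configurations from the classification of two- and three-dimensional fans in Figure \ref{duals}, and I would argue by induction on $d$: picking a free segment $I\subset p$, Corollary \ref{cor:horvath} makes $Q=\proj_I(P)$ a $(d-1)$-parallelohedron, and the layering Lemmas \ref{lem:3.3} and \ref{lem:3.5} exhibit $P$ as the two lattice layers $\mathcal L_0,\mathcal L_1$ glued along the cap complex, so that a decomposition of $Q$ obtained from the inductive hypothesis (applied to the free space $\proj_I(p^{\star})$, of dimension one less, provided $Q$ inherits the Voronoi property) can be transported back to a cross for $P$. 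The residual case, in which $p^{\star}=p$ is exactly two-dimensional and the projection destroys all but one free direction, is the genuine obstacle; I expect it to require a direct analysis of the facets with $\mathbf s(F)\in p^{\bot}$ together with Theorem \ref{thm:ordine_reducibility_misc}, building the splitting hyperplanes by hand, after which Theorem \ref{thm:cross} (or Theorem \ref{thm:ordine_reducibility} directly) yields the reducibility of $P$.
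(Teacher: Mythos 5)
Your opening step is sound and matches the paper's Lemma~\ref{lem:9.1}: since three proper subspaces cannot cover a two\-/dimensional real vector space, every six-belt contains a facet with facet vector in $p^{\bot}$, so $p$ sits inside a perfect free plane. After that, however, the proposal has two genuine problems. First, the closing move is circular. In the paper's induction scheme (Section~\ref{sec:6}), Theorem~\ref{thm:cross} for dimension $d$ is \emph{derived from} Theorem~\ref{thm:reducibility_for_twodim_free} for dimension $d$ (Lemma~\ref{lem:8.3}); when you are proving the latter for a $d$-dimensional $P$, you may only invoke Theorems~\ref{thm:cross} and~\ref{thm:cross_reduction} for dimensions up to $d-2$. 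You cannot apply Theorem~\ref{thm:cross} to $P$ itself, and Theorem~\ref{thm:ordine_reducibility_misc} is no substitute, since a cross consists of two hyperplanes, not two complementary subspaces. Second, the step you yourself flag as ``the hard part'' --- showing that the projections to $p$ of all facet vectors outside $p^{\bot}$ fall on just two lines --- is precisely the mathematical content of the theorem, and neither the six-belt relation $\mathbf s_1\pm\mathbf s_2\pm\mathbf s_3=\mathbf 0$ nor the sketched $\proj_I$ induction supplies it: projecting along a single $I\subset p$ leaves $Q=\proj_I(P)$ with only a one-dimensional free space, so the inductive hypothesis does not apply, exactly as you note.

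The paper's route is different in a way that avoids both issues: it projects along the \emph{whole} plane $p$ to obtain a $(d-2)$-dimensional Voronoi parallelohedron $R=\proj_p(P)$ (Lemma~\ref{lem:5.4}), and proves that $R$ either is a prism or has a cross (Lemma~\ref{lem:9.3}). The cross for $R$ is produced not by tracking six-belts of $P$ but by the ``good/bad facet'' dichotomy of Lemma~\ref{lem:9.2} applied to the two translation vectors $\mathbf v_1,\mathbf v_2$ coming from the two layer structures associated with the perfect lines $\ell_1,\ell_2$, with the triple-intersection identity of Lemma~\ref{lem:5.6} and the case analysis of Lemma~\ref{lem:cases} ruling out a facet that is bad for both. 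Then Theorems~\ref{thm:cross} and~\ref{thm:cross_reduction} are applied at dimension $d-2$ (where they are available) to split $R=S_1\oplus S_2$ aligned with the cross, and the decomposition is lifted to $P$ via Theorem~\ref{thm:ordine_reducibility_misc} using the two complementary subspaces $\left\langle \mathbf w_1 \right\rangle \oplus \lin\aff S_2$ and $\left\langle \mathbf w_2 \right\rangle \oplus \lin\aff S_1$. To repair your argument you would need to replace the direct construction of a cross for $P$ by this descent to dimension $d-2$, or find an independent proof of the two-line claim that does not presuppose Theorem~\ref{thm:cross} in dimension $d$.
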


\begin{lem}
Theorems~\ref{thm:voronoi_for_sum}~and~\ref{thm:reducibility_for_twodim_free} are equivalent.
\end{lem}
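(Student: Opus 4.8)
The plan is to prove the two implications separately, exploiting the following dictionary, valid for a Voronoi parallelohedron $P$ and a free segment $I$. By Theorem~\ref{thm:1.2} every six-belt of $P$ contains a facet parallel to $I$; since $\mathcal B_I(P)$ is exactly the set of facet vectors of facets parallel to $I$, these vectors meet every six-belt, so by the free-space criterion recalled for Voronoi parallelohedra the orthogonal complement $\langle \mathcal B_I(P)\rangle^{\bot}$ is a perfect free space, it contains the direction of $I$, and its dimension equals $d-\dim\langle\mathcal B_I(P)\rangle$. This is what links the hypothesis of Theorem~\ref{thm:reducibility_for_twodim_free} (existence of a $2$-dimensional free space) to the numerical condition $\dim\langle\mathcal B_I(P)\rangle\leq d-2$, and it is the hinge of the whole equivalence.

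First I would deduce Theorem~\ref{thm:voronoi_for_sum} from Theorem~\ref{thm:reducibility_for_twodim_free} by induction on $d$, the base being the classical low-dimensional case where Voronoi's conjecture is known. Given Voronoi $P$ and free $I$: if $\dim\langle\mathcal B_I(P)\rangle=d-1$, then Corollary~\ref{cor:4.3} already gives that $P+I$ is Voronoi. Otherwise $\dim\langle\mathcal B_I(P)\rangle\leq d-2$, so the perfect free space $\langle\mathcal B_I(P)\rangle^{\bot}$ has dimension at least $2$; since every linear subspace of a free space is again a free space, $P$ has a $2$-dimensional free space, and Theorem~\ref{thm:reducibility_for_twodim_free} yields $P=P_1\oplus P_2$ with $P_1,P_2$ Voronoi of smaller dimension. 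Theorem~\ref{thm:4.2} then shows $P_1+I_1$ and $P_2+I_2$ are parallelohedra and reduces the Voronoi property of $P+I$ to that of $P_1+I_1$ and $P_2+I_2$, which follows from the induction hypothesis (trivially when some $I_j$ degenerates to a point).

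For the converse I would assume Theorem~\ref{thm:voronoi_for_sum} and argue by contradiction: suppose $P$ is an irreducible Voronoi parallelohedron with a $2$-dimensional free space $p$. For a generic direction $I\subset p$ a facet is parallel to $I$ exactly when it is parallel to $p$, so $\mathcal B_I(P)$ is the set of facet vectors orthogonal to $p$ and $\langle\mathcal B_I(P)\rangle\subseteq p^{\bot}$. By assumption $P+I$ is Voronoi, and as $P$ is irreducible Theorem~\ref{thm:4.1} forces $\mathbf s(F)\perp I$ for every $\mathbf s(F)\in\mathcal A_I(P)$; hence $\mathcal A_I(P)\cup\mathcal B_I(P)\subseteq I^{\bot}$, and Corollary~\ref{cor:horvath} upgrades this to $\langle\mathcal A_I(P)\cup\mathcal B_I(P)\rangle=I^{\bot}$. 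Writing $I^{\bot}=p^{\bot}\oplus\ell_I$, where $\ell_I=p\cap I^{\bot}$ is the line in $p$ orthogonal to $I$, and noting that $\langle\mathcal B_I(P)\rangle\subseteq p^{\bot}$ is orthogonal to $\ell_I$, this span identity forces some $\mathbf s(F)\in\mathcal A_I(P)$ to have a nonzero component along $\ell_I$; combined with $\mathbf s(F)\perp I$ this gives $\proj_p\mathbf s(F)\in\ell_I\setminus\{0\}$. But $F$ ranges over the finitely many $(d-2)$-faces of $P$, so only finitely many lines $\ell_I$ can arise, contradicting the freedom to choose $I$ generically in $p$. This contradiction shows $P$ must be reducible, which is Theorem~\ref{thm:reducibility_for_twodim_free}.

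The routine parts are the bookkeeping with $\mathcal A_I(P)$ and $\mathcal B_I(P)$ and the dimensional induction in the first implication. The delicate step, and the one I expect to be the main obstacle, is the genericity argument in the converse: I must select $I\subset p$ avoiding simultaneously the finitely many directions at which an extra facet becomes parallel to $I$ (so that $\mathcal B_I(P)$ really is the family of facet vectors orthogonal to $p$) and the finitely many lines $\ell_I$ produced by the $(d-2)$-faces. Verifying that these are genuinely finite exclusions, and that every such generic $I$ is free (which holds precisely because $p$ is a free space, so that Theorem~\ref{thm:4.1} applies), is where the care is needed, and it is the real content of the equivalence.
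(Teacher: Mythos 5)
Your proposal is correct and follows essentially the same route as the paper: the forward implication via a dimension induction using Corollary~\ref{cor:4.3} when $\dim\langle\mathcal B_I(P)\rangle=d-1$ and Theorem~\ref{thm:4.2} otherwise (the paper phrases this as a minimal counterexample with a reducible/irreducible dichotomy, which is the same argument), and the converse via the finiteness contradiction forced by Theorem~\ref{thm:4.1} and Corollary~\ref{cor:horvath} as $I$ rotates in the perfect free plane. Your spelled-out version of the genericity step (finitely many lines $\ell_I$ arising from the finitely many standard $(d-2)$-faces) is just a more explicit form of the paper's ``finitely many possibilities for $\left\langle\mathcal A_I(P)\cup\mathcal B_I(P)\right\rangle$'' argument.
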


\begin{proof}
Indeed, let Theorem~\ref{thm:voronoi_for_sum} be true. Suppose that there exists an irreducible Voronoi parallelohedron $P$ with a 2-dimensional free space.
Then $P$ has a perfect free space $q$ of dimension at least 2. Further, there are finitely many possibilities for 
$\left\langle \mathcal A_I(P) \cup \mathcal B_I(P) \right\rangle$. Therefore there is only a finite number of directions for $I \parallel q$
such that
$$I\, \bot\, \left\langle \mathcal A_I(P) \cup \mathcal B_I(P) \right\rangle.$$
But from Theorems~\ref{thm:voronoi_for_sum} and \ref{thm:4.1} it follows that the orthogonality should hold for every $I \parallel q$. The contradiction
shows that Theorem~\ref{thm:reducibility_for_twodim_free} follows from Theorem~\ref{thm:voronoi_for_sum}.

Let Theorem~\ref{thm:reducibility_for_twodim_free} be true. If Theorem~\ref{thm:voronoi_for_sum} is false, consider a counterexample $P+I$ with the least 
possible dimension of $P$. If $P$ is reducible, then by Theorem~\ref{thm:4.2} either $P_1 + I_1$ or $P_2 + I_2$ is a smaller counterexample, which is a 
contradiction to the minimality. If $P$ is irreducible, then, by Theorem~\ref{thm:reducibility_for_twodim_free}, $P$ has no free spaces of dimension
greater than 1. Therefore $I$ is parallel to a perfect free line and the identity (\ref{eq:4.1}) holds. Hence $P+I$ is Voronoi, so it is not a counterexample
to Theorem~\ref{thm:voronoi_for_sum}. As a result, Theorem~\ref{thm:voronoi_for_sum} follows from Theorem~\ref{thm:reducibility_for_twodim_free}.

\end{proof}

\begin{lem}\label{lem:4.4}
Let $I$ be a segment and let $P$ and $P+I$ be Voronoi parallelohedra (possibly, for different Euclidean metrics). Then $\proj_I(P)$ is a 
Voronoi parallelohedron for every possible choice of the image space of the projection.
\end{lem}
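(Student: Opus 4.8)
The plan is to reduce to a single, well-chosen projection and then recognise the projected body as a lower-dimensional Voronoi cell. First I would record that being a Voronoi parallelohedron is invariant under invertible linear maps: if $A$ is a linear automorphism of $\mathbb E^d$ and $P_0$ is the Dirichlet--Voronoi domain of a lattice $\Lambda_0$ for a metric $m_0$, then $A(P_0)$ is the Dirichlet--Voronoi domain of $A\Lambda_0$ for the transported metric $m_0(A^{-1}\,\cdot\,,A^{-1}\,\cdot\,)$. Any two projections along $I$ onto different complementary $(d-1)$-spaces have the same kernel $\lin\aff I$ and therefore differ by a linear isomorphism of their images; hence the corresponding images of $P$ are linearly equivalent. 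It is thus enough to prove the claim for one convenient image space.

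Let $m$ be a Euclidean metric for which $P+I$ is Voronoi and let $\perp$ denote $m$-orthogonality. Since $\proj_I(I)$ is a single point, $\proj_I(P)=\proj_I(P+I)$ up to a translation, so I may work with the Voronoi cell $P+I$ itself. Each facet of a Voronoi cell with facet vector $\mathbf s$ lies in the bisector hyperplane $m$-orthogonal to $\mathbf s$; consequently every facet vector of $P+I$ belonging to a facet parallel to $I$ is $m$-orthogonal to $I$. These facet vectors are precisely the elements of $\mathcal A_I(P)\cup\mathcal B_I(P)$, whose span is $(d-1)$-dimensional by Corollary \ref{cor:horvath}; being contained in the $(d-1)$-dimensional space $I^{\perp}$, this span equals $H:=I^{\perp}$. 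I now fix the image space to be $H$ and take $\proj_I$ to be the $m$-orthogonal projection onto $H$. As $\mathcal A_I(P)\cup\mathcal B_I(P)\subset H$ is fixed pointwise by this projection, Theorem \ref{thm:venkov_proj} applied to $P+I$ along $I$ shows that $\proj_I(P+I)$ is a parallelohedron tiling $H$ by the rank-$(d-1)$ lattice $\Lambda':=\mathbb Z(\mathcal A_I(P)\cup\mathcal B_I(P))$.

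It remains to identify this parallelohedron with a genuine Voronoi cell. Let $V'$ be the Dirichlet--Voronoi domain of $\Lambda'$ inside $H$ for the restricted metric $m|_H$. The inclusion $\proj_I(P+I)\subseteq V'$ is immediate: for $x\in P+I$ and $\lambda\in\Lambda'\subset\Lambda(P+I)$ the defining inequality $m(x,x)\le m(x-\lambda,x-\lambda)$ rewrites as $2m(x,\lambda)\le m(\lambda,\lambda)$, and since $x-\proj_I(x)\in\lin\aff I$ is $m$-orthogonal to $\lambda\in H$ one may replace $x$ by $\proj_I(x)$ without changing the inequality, giving $\proj_I(x)\in V'$. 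Both $\proj_I(P+I)$ and $V'$ tile $H$ by $\Lambda'$, so they have the same $(d-1)$-volume, equal to the covolume of $\Lambda'$; an inclusion of convex bodies of equal volume is an equality. Hence $\proj_I(P+I)=V'$ is a Voronoi parallelohedron, and by the first paragraph the same holds for $\proj_I(P)$ with any choice of image space.

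The only delicate point, and the step I expect to require the most care, is the coordination between the projection and the metric: one must project $m$-orthogonally so that $\Lambda'$ is fixed and so that the discarded component $x-\proj_I(x)$ is $m$-orthogonal to every vector of $\Lambda'$. This is exactly what makes the one-line comparison with $V'$ valid; for a projection not adapted to $m$ the inclusion $\proj_I(P+I)\subseteq V'$ would fail, which is why the reduction to an $m$-orthogonal projection in the first paragraph is essential.
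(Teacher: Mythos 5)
Your proof is correct, but it follows a genuinely different route from the paper's. The paper also begins by reducing to a single image space (namely $\left\langle \mathcal A_I(P)\cup\mathcal B_I(P)\right\rangle$), but then it invokes the existence of a Euclidean metric in which \emph{$P$ itself} is Voronoi and $I$ is orthogonal to that hyperplane --- a fact that requires splitting into the irreducible case (via Theorem~\ref{thm:4.1}) and the reducible case (via Grishukhin's analysis) --- and finally cites Grishukhin's Proposition~5 for the statement that the orthogonal projection of a Voronoi cell along a free direction is again Voronoi. You instead work with the metric $m$ that makes \emph{$P+I$} Voronoi, observe that in this metric the orthogonality $I\,\bot\,\left\langle \mathcal A_I(P)\cup\mathcal B_I(P)\right\rangle$ comes for free (these vectors are facet vectors of facets of $P+I$ parallel to $I$, hence $m$-normals to hyperplanes containing a translate of $I$), and then prove the projection statement from scratch: Theorem~\ref{thm:venkov_proj} shows $\proj_I(P+I)$ tiles $H=I^{\bot}$ by $\Lambda'=\mathbb Z(\mathcal A_I(P)\cup\mathcal B_I(P))$, the bisector inequalities give $\proj_I(P+I)\subseteq V'$, and equality follows since both convex bodies have volume equal to the covolume of $\Lambda'$. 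This buys self-containedness --- no case split on reducibility and no reliance on Theorem~\ref{thm:4.1} or on the external references --- at the cost of a slightly longer argument; the paper's version is shorter precisely because it delegates the substance to Grishukhin. The only points worth making explicit in your write-up are the normalization that $P+I$ is the Dirichlet--Voronoi cell centered at the origin (so that the inequality $2m(x,\lambda)\le m(\lambda,\lambda)$ is the correct membership test) and the standard fact that a proper inclusion of full-dimensional closed convex bodies forces a strict volume inequality; both are routine.
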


\begin{proof}
Indeed, it is enough to prove Lemma~\ref{lem:4.4} for any image space of $\proj_I$, since changing the image space results in the affine transformation
of the projection.

Let $\Pi = \left\langle \mathcal A_I(P) \cup \mathcal B_I(P) \right\rangle$ be the image space of $\proj_I$.
There exists a Euclidean norm such that $P$ is Voronoi with respect to it and $I$ is orthogonal to $\Pi$. For the irreducible case it is a consequence
of Theorem~\ref{thm:4.1}, and for reducible parallelohedra see \cite[\S 9]{Gri2006}. 
Then $\proj_I(P)$ is Voronoi with respect to the restriction of $|\cdot |_P$ to $\Pi$ (see the details in~\cite[Proposition 5]{Gri2006}). 

\end{proof}

\section{Two-dimensional perfect free spaces}\label{sec:5}

In this section we study the following construction. Let $P$ be a Voronoi parallelohedron and let $p$ be a two-dimensional perfect free space of $P$.
This case is extremely important in our argument, so we aim to establish several consequences. 

We need some more notation. Define 
$$\mathcal B_p(P) = \{ \mathbf s(F) : F\; \text{is a facet of $P$ and}\; \mathbf s(F)\, \bot\, p \},$$ 
$$\mathcal A_p(P) = \{ \mathbf s(F) : F\; \text{is a standard $(d-2)$-face of $P$ and}\; \mathbf s(F)\, \bot\, p \}.$$
Here the orthogonality is related to the Euclidean metric that makes $P$ Voronoi. 

Since $\mathbf s(F)\, \bot\, F$, then from the definition of a perfect space follows that
$$\dim \left\langle \mathcal B_p(P) \right\rangle = d-2.$$
Therefore $\left\langle \mathcal B_p(P) \right\rangle$ is the orthogonal complement to $p$ and hence
$$\mathcal A_p(P) = \mathcal A_I(P) \cap \left\langle \mathcal B_p(P) \right\rangle $$
for every $I \parallel P$.

\begin{lem}\label{lem:5.1}
Let $P$ be a Voronoi parallelohedron and let $p$ be a two-dimensional perfect free space of $P$. Let $I$ be a segment rotating in $p$. Then
$I_0$ is parallel to a perfect line iff the hyperplane
$$\left\langle \mathcal A_I(P) \cup \mathcal B_I(P) \right\rangle $$
as a function of $I$ is discontinuous at $I = I_0$.
\end{lem}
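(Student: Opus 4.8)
The plan is to reduce everything to the behaviour of a single line inside the two-plane $p$. Every facet $F$ with $\mathbf s(F)\,\bot\,p$ is parallel to each segment $I\parallel p$, so $\mathcal B_p(P)\subseteq \mathcal B_I(P)$ for all such $I$, and hence the hyperplane $H(I):=\left\langle \mathcal A_I(P)\cup \mathcal B_I(P)\right\rangle$ always contains the fixed $(d-2)$-space $\left\langle \mathcal B_p(P)\right\rangle=p^{\bot}$. Hyperplanes through $p^{\bot}$ correspond bijectively and bicontinuously to lines in $p$ via $H\mapsto H\cap p$, so it suffices to study the line $\ell(I):=H(I)\cap p=\proj_{p^{\bot}}(H(I))\subseteq p$ (the orthogonal projection onto $p$) as a function of the direction of $I$, which ranges over the projective line of directions of $p$. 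By Corollary~\ref{cor:horvath} the dimension of $H(I)$ is always $d-1$, so $\ell(I)$ is always exactly $1$-dimensional, and continuity of $H$ at $I_0$ is equivalent to continuity of $\ell$ at $I_0$.

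First I would isolate the finite set $\Sigma$ of directions $I_0\parallel p$ for which some facet $F$ with $\mathbf s(F)\,\not\bot\,p$ satisfies $\mathbf s(F)\,\bot\,I_0$, i.e.\ $\mathcal B_{I_0}(P)\supsetneq \mathcal B_p(P)$, and claim $\Sigma$ is exactly the set of directions parallel to a perfect line. Indeed, the facets parallel to $p$ already meet every six-belt, since that is what makes $p$ a free space; so for $I_0\in\Sigma$ the facets parallel to $I_0$ meet every six-belt and, together, their facet vectors span $p^{\bot}\oplus(I_0^{\bot}\cap p)=I_0^{\bot}$, whence by the second Voronoi criterion stated after Corollary~\ref{cor:free_space} the line $I_0$ is a perfect free line. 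Conversely, if $I_0$ is parallel to a perfect line then the relevant facet vectors span all of $I_0^{\bot}$, which forces at least one contributing facet to be non-parallel to $p$, i.e.\ $I_0\in\Sigma$. At such $I_0$ the extra vector $\mathbf s(F)\in \mathcal B_{I_0}(P)\setminus \mathcal B_p(P)$ projects to a nonzero vector of $I_0^{\bot}\cap p$, so $\ell(I_0)=I_0^{\bot}\cap p$.

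Next I would show that $H(I)$, hence $\ell(I)$, is locally constant off $\Sigma$. For a fixed standard $(d-2)$-face $F$, semi-shadedness holds precisely when the outward normal of $\proj_F(I)$ lies in the interior of the normal cone of the vertex $\proj_F(F)$ of the parallelogram $\proj_F(P)$; this status can change only when $I$ becomes parallel to one of the two facets of $\belt(F)$ bounding that vertex. If such a bounding facet is parallel to $p$ one checks that $\proj_F(I)$ stays permanently parallel to the corresponding edge, so $F\oplus I$ merges with that facet and $F$ is never semi-shaded; otherwise the (at most two) transitions occur exactly at directions of $\Sigma$. Hence on each open arc between consecutive directions of $\Sigma$ the set $\mathcal A_I(P)$ is constant, $\mathcal B_I(P)=\mathcal B_p(P)$, and so $H(I)$ is constant. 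This already gives continuity at every direction not parallel to a perfect line.

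It remains to prove that the jump at each $I_0\in\Sigma$ is genuine, and this is the step I expect to be the main obstacle. On an arc abutting $I_0$ we have $\ell(I)=\proj_{p^{\bot}}\left\langle \mathcal A_I(P)\right\rangle$, a constant $1$-dimensional line, and I must rule out that it coincides with $\ell(I_0)=I_0^{\bot}\cap p$. The key mechanism is that any semi-shaded $(d-2)$-face is standard, hence lies in a four-belt, and satisfies the four-belt relation $\mathbf s(F)=\mathbf s(G_1)\pm\mathbf s(G_2)$ for its two bounding facets; moreover semi-shadedness forces $p\cap\lin\aff F=0$. Thus if $F$ is a semi-shaded face whose belt contains the degenerating facet $G^{\ast}=G_1$ (so $\mathbf s(G^{\ast})\,\bot\,I_0$), then $\mathbf s(F)\,\bot\,I_0$ would give $\mathbf s(G_2)\,\bot\,I_0$ as well, making the whole belt parallel to $I_0$ and so $I_0\subseteq\lin\aff F$, contradicting $p\cap\lin\aff F=0$. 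Hence such an $F$ witnesses $\mathbf s(F)\,\not\bot\,I_0$, so the arc value of $\ell$ differs from $I_0^{\bot}\cap p$ and $\ell$ jumps at $I_0$. The delicate point I would have to secure is the \emph{existence} of such a semi-shaded witness bounded by $G^{\ast}$ on at least one adjacent arc; I would obtain it from the local analysis of how the facet $G^{\ast}+I_0$ of $P+I_0$ breaks up as $I$ leaves the direction $I_0$ (equivalently, by showing the two one-sided arc values of $\ell$ differ, so that the two-sided limit fails to exist). Combining the three parts yields discontinuity exactly at the directions parallel to perfect lines.
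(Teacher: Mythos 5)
Your reduction to a line in $p$ and your treatment of the ``continuity'' direction are sound and essentially match the paper (the paper is in fact terser: it simply notes that for $I_0$ not parallel to a perfect line one has $\mathcal B_{I_0}(P)=\mathcal B_p(P)$ and that both $\mathcal A_I(P)$ and $\mathcal B_I(P)$ are locally constant near such $I_0$). Your identification of the exceptional directions $\Sigma$ with the perfect-line directions is also correct and is used implicitly in the paper.

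The genuine gap is exactly where you flag it: the proof that the jump at $I_0\in\Sigma$ actually occurs. Your four-belt mechanism only works conditionally on the \emph{existence} of a semi-shaded witness $F'$ on an adjacent arc whose belt contains the degenerating facet, and you do not establish that existence; without it you cannot exclude that every face semi-shaded by $I$ near $I_0$ has its standard vector orthogonal to $I_0$, in which case $\left\langle \mathcal A_I(P)\cup\mathcal B_I(P)\right\rangle$ would coincide with $\left\langle \mathcal A_{I_0}(P)\cup\mathcal B_{I_0}(P)\right\rangle$ and no discontinuity would be visible. The paper closes this step without any analysis of how facets break up, by invoking Lemma~\ref{lem:3.4}: pick a facet $F$ with $\mathbf s(F)\in\mathcal B_{I_0}(P)\setminus\mathcal B_p(P)$; for $I$ close to $I_0$ but $I\nparallel I_0$ one has $I\nparallel F$ (since $F\nparallel p$, the only direction of $p$ parallel to $F$ is that of $I_0$), and Lemma~\ref{lem:3.4} then forces $\mathbf s(F)\notin\left\langle \mathcal A_I(P)\cup\mathcal B_I(P)\right\rangle$, while trivially $\mathbf s(F)\in\left\langle \mathcal A_{I_0}(P)\cup\mathcal B_{I_0}(P)\right\rangle$. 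That one application of Lemma~\ref{lem:3.4} is the missing ingredient; substituting it for your witness-hunting argument would complete your proof.
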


\begin{proof}
Notice that for every $I \parallel p$ one has $\mathcal B_p(P) \subseteq \mathcal B_I(P)$.

Suppose $I_0$ is not parallel to a perfect line. Then $\dim \left\langle \mathcal B_{I_0}(P) \right\rangle < d-1$, so
$$\mathcal B_{I_0}(P) = \mathcal B_p(P).$$

The same holds for all $I$ close enough to $I_0$. In addition, for all $I$ close enough to $I_0$ holds
$$\mathcal A_{I_0}(P) = \mathcal A_I(P).$$

Thus the hyperplane $\left\langle \mathcal A_I(P) \cup \mathcal B_I(P) \right\rangle$ is the same for all $I$ close enough to $I_0$. 

Suppose that $I_0$ is parallel to a perfect line. Then $\dim \left\langle \mathcal B_{I_0}(P) \right\rangle = d-1$.

The hyperplane function $\left\langle \mathcal A_I(P) \cup \mathcal B_I(P) \right\rangle$ takes only finitely many values, as $P$ has
finitely many standard vectors. Therefore to prove the discontinuity of this function it is enough to prove
\begin{equation}\label{eq:5.1}
\left\langle \mathcal A_I(P) \cup \mathcal B_I(P) \right\rangle \neq \left\langle \mathcal A_{I_0}(P) \cup \mathcal B_{I_0}(P) \right\rangle,
\end{equation}
if $I$ is close enough to $I_0$, but $I \nparallel I_0$.

Indeed, $P$ has a facet $F$ such that 
\begin{equation}\label{eq:5.2}
\mathbf s(F) \in \mathcal B_{I_0}(P) \setminus \mathcal B_p(P).
\end{equation}

If $I$ satisfies the conditions above, then $I \nparallel F$. But then, by Lemma~\ref{lem:3.4},
\begin{equation}\label{eq:5.3}
\mathbf s(F) \notin  \left\langle \mathcal A_I(P) \cup \mathcal B_I(P) \right\rangle.
\end{equation}

To finish the proof we observe that (\ref{eq:5.1}) follows from comparing (\ref{eq:5.2}) and (\ref{eq:5.3}).

\end{proof}

\begin{lem}\label{lem:5.2}
Let $P$ be a Voronoi parallelohedron and let $p$ be a two-dimensional perfect free space of $P$. Then
\begin{enumerate}
	\item[\rm 1.] $p$ contains exactly two perfect lines --- $\ell_1$ and $\ell_2$.
	\item[\rm 2.] Every facet $F$ of $P$ is parallel either to $\ell_1$ or to $\ell_2$, or to both. (The last case means $F\parallel p$.)
\end{enumerate}
\end{lem}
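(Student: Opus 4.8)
The plan is to translate everything into the behaviour of the orthogonal projections of facet vectors onto $p$. Write $\proj_{p^{\perp}}$ for the projection along $p^{\perp}$ onto $p$ (orthogonality in the metric making $P$ Voronoi), and call a facet $F$ \emph{flat} if $\mathbf s(F)\perp p$ and \emph{tilted} otherwise. For a tilted $F$ the image $\proj_{p^{\perp}}(\mathbf s(F))$ is a nonzero vector of $p$ spanning a line $m(F)\subset p$, and I set $\ell(F)=m(F)^{\perp}\cap p$. The first step is to identify the perfect lines of $p$ with the lines $\ell(F)$. For a direction $\langle I\rangle\subset p$ the space $\langle\mathcal B_I(P)\rangle$ always contains the fixed $(d-2)$-space $\langle\mathcal B_p(P)\rangle=p^{\perp}$, and it can acquire a new vector only through a tilted facet parallel to $I$, i.e. one with $\langle I\rangle=\ell(F)$. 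Hence $\langle\mathcal B_I(P)\rangle=p^{\perp}$ unless $\langle I\rangle=\ell(F)$ for some tilted $F$, in which case $\langle\mathcal B_I(P)\rangle=p^{\perp}\oplus m(F)$ has dimension $d-1$ and $\langle I\rangle$ is a perfect free line; this is exactly the dichotomy of Lemma~\ref{lem:5.1}. So statement~1 is equivalent to the assertion that the tilted facets realise exactly two distinct lines $m(F)$, and statement~2 says the same thing, since $F\parallel\ell_i$ means $\proj_{p^{\perp}}(\mathbf s(F))\in\ell_i^{\perp}\cap p=m_i$.

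The lower bound is immediate: the facet vectors generate $\Lambda(P)$ and therefore span $\mathbb E^d$, while the flat ones span only $p^{\perp}$; hence the projections $\proj_{p^{\perp}}(\mathbf s(F))$ of the tilted facet vectors must span the whole plane $p$, which already requires at least two distinct lines $m(F)$, i.e. at least two perfect lines.

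The content of the lemma is the upper bound, and the first step toward it is a constraint on six-belts. Because $p$ is free, every segment $I\parallel p$ is free, so by Theorem~\ref{thm:1.2} each six-belt contains a facet parallel to $I$. A non-flat facet is parallel to exactly one direction of $p$, so a six-belt whose three facet vectors were all tilted would contain a facet parallel to $I$ only for the at most three directions $I$ orthogonal to those vectors, failing the criterion for every other $I\parallel p$; thus every six-belt contains a flat facet. Applying $\proj_{p^{\perp}}$ to the linear relation satisfied by the three facet vectors of a six-belt then forces its non-flat facets to have proportional projections: \emph{each six-belt carries at most one tilted direction}.

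Finally I would rule out a third tilted direction by passing to the central section $S=P\cap p$, a centrally symmetric polygon whose edges lie on tilted facets and whose edge normals in $p$ are tilted directions; the aim is to prove that $S$ is a parallelogram and that every tilted direction is exposed on $S$. If $S$ had three edge directions, a vertex $w$ of $S$ where edges of two distinct directions $m_a\neq m_b$ meet lies on the $(d-2)$-face $E=F_a\cap F_b$; by the previous paragraph $E$ cannot span a six-belt (such a belt would carry two different tilted directions without a flat facet), so $E$ is standard and, $P$ being Voronoi, its dual Delaunay $2$-cell $D(E)$ is a rectangle. The main obstacle, and the place where Voronoi-ness is essential, is to convert this rectangular local data along the whole boundary of $S$, together with the central symmetry of $S$ and the Delaunay duality, into a genuine six-belt carrying two distinct tilted directions, contradicting the six-belt constraint above (this is exactly the mechanism by which the hexagonal prism fails to have $p$ free). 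Once a hexagonal or larger central section is excluded, $S$ is a parallelogram, so at most two tilted directions occur, which yields statement~2 and hence the exact count in statement~1.
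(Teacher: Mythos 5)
Your reformulation is sound: identifying the perfect lines of $p$ with the lines $\ell(F)=m(F)^{\perp}\cap p$ attached to tilted facets, and reducing both statements to the single claim that the tilted facet vectors project onto \emph{exactly two} lines of $p$, is correct. The lower bound (the facet vectors generate $\Lambda(P)$ while the flat ones span only $p^{\perp}$, so at least two tilted directions occur) is fine, as is the six-belt constraint (every six-belt contains a flat facet, and projecting the relation among its three facet vectors shows it carries at most one tilted direction). But the upper bound --- at most two tilted directions --- \emph{is} the content of the lemma, and you do not prove it. You sketch a route through the central section $S=P\cap p$ and then explicitly label the decisive step (``convert this rectangular local data \ldots into a genuine six-belt carrying two distinct tilted directions'') as ``the main obstacle'' without carrying it out; as written this is a plan, not a proof. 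The plan also has a second unaddressed hole: an edge of $S$ arises only from a tilted facet whose relative interior actually meets the $2$-plane $p$, and nothing guarantees that every tilted direction is realized by such a facet. You state ``every tilted direction is exposed on $S$'' as an aim but never argue it, so even a completed proof that $S$ is a parallelogram would not by itself bound the number of tilted directions.

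For comparison, the paper reaches the conclusion without sectioning $P$. For statement 2 it notes that a tilted facet $F$ contains the direction $I=\lin\aff F\cap p$ and that $\mathcal B_I(P)\supseteq\mathcal B_p(P)\cup\{\mathbf s(F)\}$ already spans a $(d-1)$-space, so $\langle I\rangle$ is a perfect line by Corollary~\ref{cor:free_space}; this is a two-line dimension count. For statement 1 it takes a generic $I_0\parallel p$, uses Corollary~\ref{cor:horvath} to produce a standard $(d-2)$-face $G$ with $\mathbf s(G)\in\mathcal A_{I_0}(P)\setminus\mathcal A_p(P)$, and shows that the only discontinuities of the hyperplane function $I\mapsto\left\langle \mathcal A_I(P)\cup\mathcal B_I(P)\right\rangle$ --- which by Lemma~\ref{lem:5.1} are exactly the perfect directions --- occur when $I$ is parallel to one of the two facets of $\belt(G)$. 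If you want to keep your framework, the quickest repair is to import the paper's argument for statement 2 (every tilted facet is parallel to a perfect line) and then supply the ``exactly two'' count via the Lemma~\ref{lem:5.1} discontinuity analysis rather than via the section $S$.
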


\begin{proof}

Start with the proof of statement 1. Choose a segment $I_0 \parallel p$ such that $\dim \left\langle \mathcal B_{I_0}(P) \right\rangle = d-2$. It is possible,
moreover, $I_0$ can be chosen arbitrarily, except for a finite number of directions.

Let $G$ be a standard $(d-2)$-face of $P$ such that
$$\mathbf s(G) \in \mathcal A_{I_0}(P) \setminus \mathcal A_p(P).$$

$G$ adjoins two facets $F$ and $F'$. These facets belong to antipodal caps of $P$ with respect to $I_0$, so
$$\mathbf s(F), \mathbf s(F') \notin \left\langle \mathcal A_{I_0}(P) \cup \mathcal B_{I_0}(P) \right\rangle.$$

Let $G'$ be the $(d-2)$-face of $P$ defined by
$$\belt(G') = \belt(G) \quad \text{and} \quad \inter G' \subset \inter \pcap_{I_0}(P).$$

Rotating the segment $I \parallel p$, one can observe that
$$\left\langle \mathcal A_I(P) \cup \mathcal B_I(P) \right\rangle = \left\langle \mathcal B_p(P) \cup \{\mathbf v \}\right\rangle,$$
where $\mathbf v \in \{ \mathbf s(F), \mathbf s(F'), \mathbf s(G), \mathbf s(G') \}$ and the cases $\{ \mathbf s(F), \mathbf s(F')\}$ happen only if
$I \parallel F$ and $I \parallel F'$ respectively. Thus $I \parallel F$ and $I \parallel F'$ are the only cases of discontinuity of
$$\left\langle \mathcal A_I(P) \cup \mathcal B_I(P) \right\rangle.$$

Therefore $p$ contains exatly two perfect lines, namely those parallel to $F$ and $F'$.

To prove statement 2 suppose that $F$ is a facet of $P$ and $F\nparallel p$. Let $I$ be a segment satisfying $I \parallel F$ and
$I \parallel p$. We have
$$\dim \left\langle \mathcal B_I(P) \right\rangle \geq \dim \left\langle \mathcal B_p(P) \cup \mathbf s(F) \right\rangle = d-1.$$

Hence $I$ is parallel to a perfect line. By assumption, $F \parallel I$, so $F$ is parallel to a perfect line.

\end{proof}

In the following lemma we will reproduce from \cite{DGM2013} the analysis of all possible arrangements of a free segment and a $(d-3)$-face of a parallelohedron.
Additionally, we will emphasize the arrangements that appear if a segment is parallel to a perfect two-dimensional free plane transversal to a $(d-3)$-face
or to a perfect line of such plane.

\begin{lem}\label{lem:cases}
Let $E$ be a $(d-3)$-face of a parallelohedron $P$ and let $I$ be a free segment for $P$. Let the image space of $\proj_E$ be the 3-space
where $\fan (E)$ lies. Then 
\begin{enumerate}
	\item[\rm 1.] If $\proj_E (I)$ does not degenerate into a point, then $\proj_E (I)$ and $\fan (E)$ are arranged together in one of the ways shown in 
	Figures~\ref{segm1}~--~\ref{segm2}. 
	\item[\rm 2.] If $p$ is a perfect two-dimensional free plane of $P$ and $p$ is transversal to $E$, then $\proj_E(p)$ is arranged with $\fan (E)$
	in one of the ways shown in Figure~\ref{planes}.
	\item[\rm 3.] Finally, if $I$ is parallel to a perfect free line in $p$, then $\proj_E (I)$ is arranged as one of the highlighted segments in 
	Figure~\ref{planes}.
\end{enumerate}
\end{lem}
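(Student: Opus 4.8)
The plan is to reduce the statement to a finite combinatorial check driven by the classification of three-dimensional fans (Figure~\ref{duals}) together with the fact that $I$ free means, by definition, that $P+I$ is a parallelohedron (cf.\ Grishukhin's criterion, Theorem~\ref{thm:1.2}). First I would fix the three-space $\mathbb E^d/\lin\aff E$ in which $\fan(E)$ lives and record the dictionary between faces of $P$ through $E$ and cells of $\fan(E)$: the $2$-walls of $\fan(E)$ are the images $\proj_E(F)$ of the facets $F\supset E$, the rays of $\fan(E)$ are the images of the $(d-2)$-faces $G\supset E$, and each such $G$ carries the datum of its belt size ($4$ or $6$). By the cited classification, the combinatorial type of $\fan(E)$ is one of the five shown in Figure~\ref{duals}. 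The key reformulation is that, when $\proj_E(I)$ is non-degenerate, the face $E\oplus I$ is a $(d-2)$-face of $P+I$ (or else $\proj_E(I)$ lies in a wall or cone of $\fan(E)$ and $E\oplus I$ sits inside a facet), and since $\lin\aff(E\oplus I)=\lin\aff E+\mathbb R\cdot\proj_E(I)$, the local structure $\fan_{P+I}(E\oplus I)$ is exactly the image of $\fan_P(E)$ under the projection along the line $\proj_E(I)$. Because $P+I$ is a parallelohedron, Minkowski's belt property forces this projected planar fan to be a complete $4$- or $6$-fan.

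With this reformulation, part~1 becomes the enumeration, for each of the five types of $\fan(E)$, of all combinatorial positions of the line $\proj_E(I)$ — inside a full-dimensional cone, inside a $2$-wall, or along a ray — whose projection collapses $\fan(E)$ onto a legitimate $4$- or $6$-fan. I would carry out this check uniformly, using that a ray of $\fan(E)$ survives as a ray of the projected fan unless $\proj_E(I)$ is parallel to it or lies in one of the two walls adjacent to it, and then reading off which positions leave exactly $4$ or $6$ surviving rays. The surviving configurations are precisely those drawn in Figures~\ref{segm1}--\ref{segm2}.

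For part~2, transversality of $p$ to $E$ makes $\proj_E(p)$ a genuine $2$-plane in $\mathbb E^d/\lin\aff E$; every segment $I\parallel p$ is free, so the constraint of part~1 must hold simultaneously for every direction of $\proj_E(I)$ inside $\proj_E(p)$. Intersecting these constraints over all directions pins $\proj_E(p)$ down to the finitely many positions of Figure~\ref{planes}. For part~3, Lemma~\ref{lem:5.2} gives that $p$ contains exactly two perfect lines $\ell_1,\ell_2$ and that every facet of $P$ is parallel to $\ell_1$ or to $\ell_2$; hence every $2$-wall of $\fan(E)$ contains $\proj_E(\ell_1)$ or $\proj_E(\ell_2)$, which singles these two directions out as the highlighted segments. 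Equivalently, by Lemma~\ref{lem:5.1} the perfect directions are exactly those at which $\left\langle \mathcal A_I(P)\cup\mathcal B_I(P)\right\rangle$ is discontinuous, and these match the directions where the projected fan in Figure~\ref{planes} jumps from a $6$-fan to a $4$-fan.

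The bulk of the work — the five-fold case distinction — is mechanical once the reformulation is installed. I expect the genuine difficulties to be twofold. First, justifying rigorously that $\fan_{P+I}(E\oplus I)$ is the projection of $\fan_P(E)$ along $\proj_E(I)$, and in particular incorporating rather than discarding the degenerate sub-cases in which $E\oplus I$ fails to be a proper $(d-2)$-face because $\proj_E(I)$ lies inside a wall or a full cone of $\fan(E)$; these are exactly the positions one is most tempted to overlook. Second, in parts~2 and~3, carrying out the intersection of the per-direction constraints correctly and certifying that the two perfect directions coincide with the discontinuity directions — this is where Lemmas~\ref{lem:5.1} and~\ref{lem:5.2} do the real work, and where a single missed configuration would propagate into the later reducibility arguments.
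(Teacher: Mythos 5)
Your overall plan --- a finite check over the five combinatorial types of $\fan(E)$ from Figure~\ref{duals}, followed by intersecting the per-direction constraints for item~2 and invoking Lemma~\ref{lem:5.2} for item~3 --- matches the structure of the paper's (very terse) proof, and your treatment of items 2 and 3 is essentially what the paper does. But the engine you propose for item~1 has a genuine gap: the claim that $\fan_{P+I}(E\oplus I)$ is the image of $\fan_P(E)$ under projection along the line $\proj_E(I)$ is false. The tiling $T(P+I)$ is not obtained from $T(P)$ by adding $I$ to each tile; the lattice changes from $\Lambda(P)$ to $\Lambda(P+I)$, and the adjacencies around $E\oplus I$ are reorganized. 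A concrete counterexample: let $P=[0,1]^3$ and $E$ a vertex, so $\fan_P(E)$ is the octant fan with $8$ maximal cones and $6$ rays. Every segment $I$ is free for the cube (it has no six-belts), and for generic $I$ the polytope $P+I$ is combinatorially a rhombic dodecahedron in which $E\oplus I$ is an edge with a \emph{six-belt}, i.e.\ $\fan_{P+I}(E\oplus I)$ has $3$ rays and $3$ maximal cones --- whereas your ``projected fan'' would have $6$ surviving rays and $6$ cones, which is not a valid fan of a $(d-2)$-face of any tiling (two-dimensional dual cells are triangles or rectangles, so such fans have $3$ or $4$ rays, never $6$; your acceptance criterion ``$4$ or $6$ surviving rays'' conflates ray count with belt size). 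So your reformulation would either miscount or wrongly exclude legitimate arrangements. Determining the belt of $E\oplus I$ in $P+I$ correctly is exactly the delicate point that made Grishukhin's original proof of Theorem~\ref{thm:1.2} incomplete and that \cite{DGM2013} repairs; it cannot be read off by naively projecting the local fan.

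The paper avoids this entirely by running the case check in the opposite direction: since $I$ is \emph{assumed} free, Theorem~\ref{thm:1.2} says every six-belt of $P$ contains a facet parallel to $I$; restricting to the six-belts $\belt(G)$ with $G\supset E$ a $(d-2)$-face dual to a triangle, this becomes a condition visible inside $\fan_P(E)$ itself (for each ray of $\fan(E)$ contained in three walls, $\proj_E(I)$ must lie in the plane of one of the three facet directions of that belt), and the enumeration of positions of $\proj_E(I)$ satisfying this for each of the five fan types yields Figures~\ref{segm1}--\ref{segm2}. If you replace your projection argument by this direct application of Theorem~\ref{thm:1.2} --- taking that theorem as a black box rather than re-deriving its belt analysis --- the rest of your proposal goes through as written.
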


\begin{proof}
Item 1 is verified by inspection. One should check if the condition of Theorem~\ref{thm:1.2} holds for all six-belts associated with $E$. For the proof
of item 2, one should enumerate all the 2-planes in the image space of $\proj_E$ such that all segments parallel to such a plane are enlisted in 
Figures~\ref{segm1}~--~\ref{segm2}. Finally, in order to select segments parallel to perfect free lines, one should apply Lemma~\ref{lem:5.2}. 

\end{proof}

\begin{figure}[!t]
\centerline{%
\begin{minipage}{0.49\textwidth}
\centerline{\includegraphics[height=4.5cm]{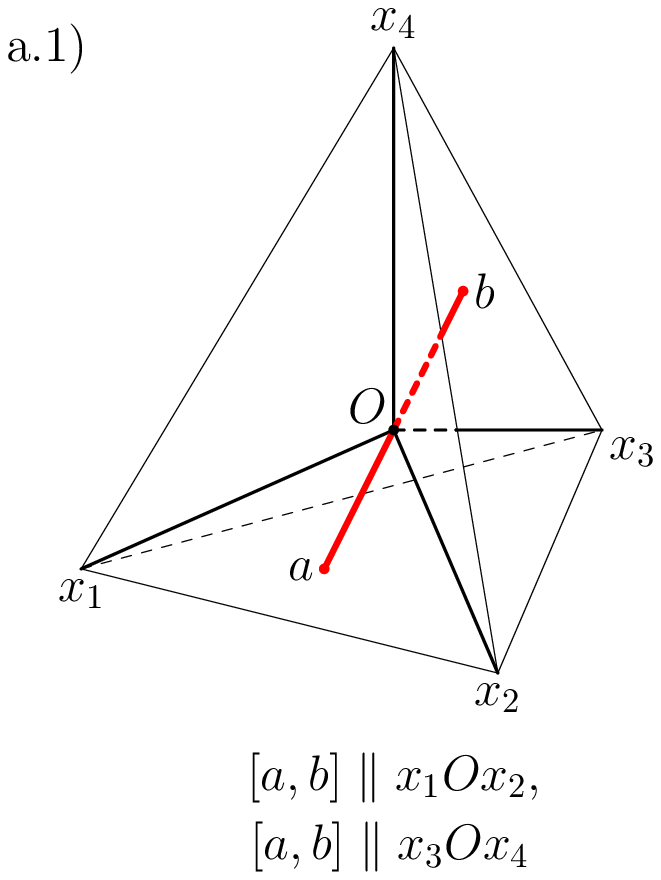}}
\end{minipage}%
\begin{minipage}{0.49\textwidth}
\centerline{\includegraphics[height=4.5cm]{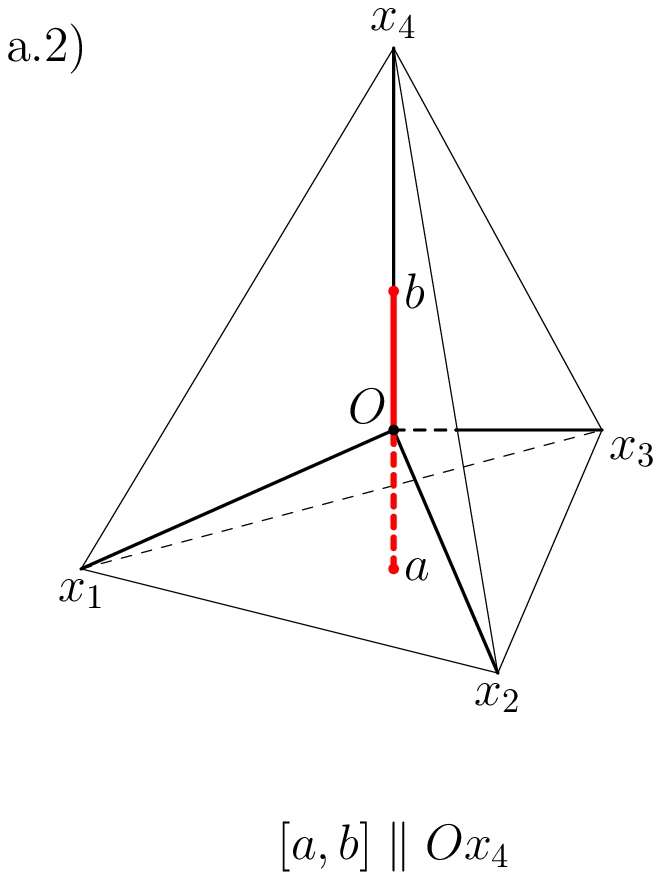}}
\end{minipage}%
}

\vspace{5mm}
\centerline{%
\begin{minipage}{0.49\textwidth}
\centerline{\includegraphics[height=4.5cm]{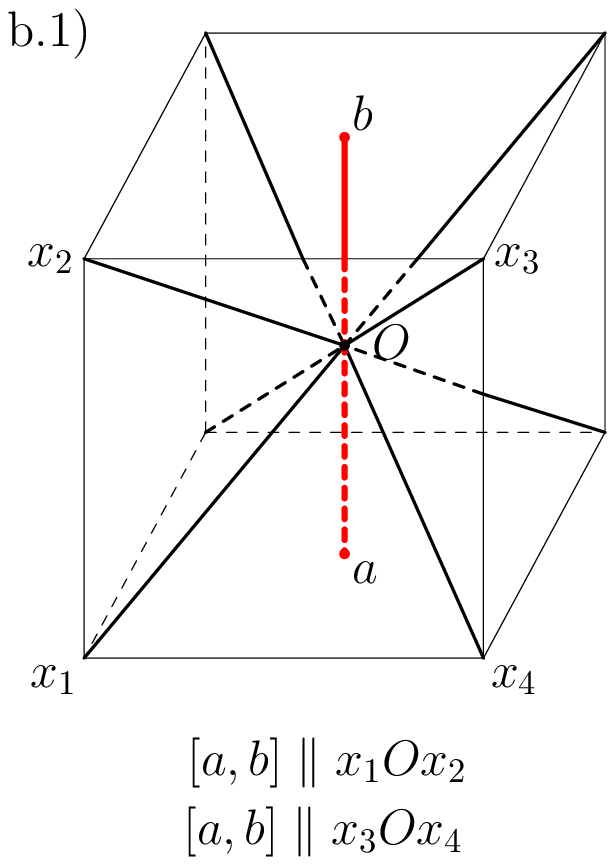}}
\end{minipage}%
\begin{minipage}{0.49\textwidth}
\centerline{\includegraphics[height=4.5cm]{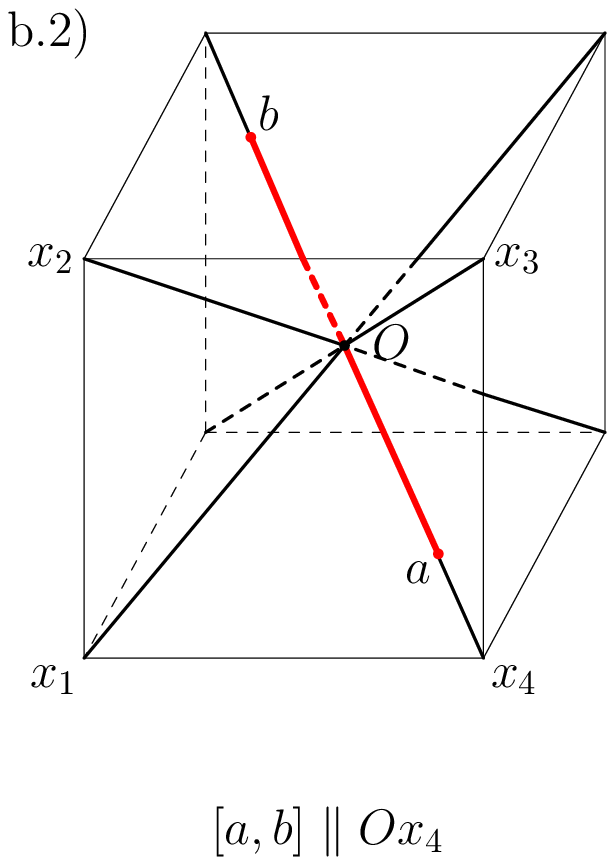}}
\end{minipage}%
}

\vspace{5mm}
\centerline{%
\begin{minipage}{0.32\textwidth}
\centerline{\includegraphics[height=4.5cm]{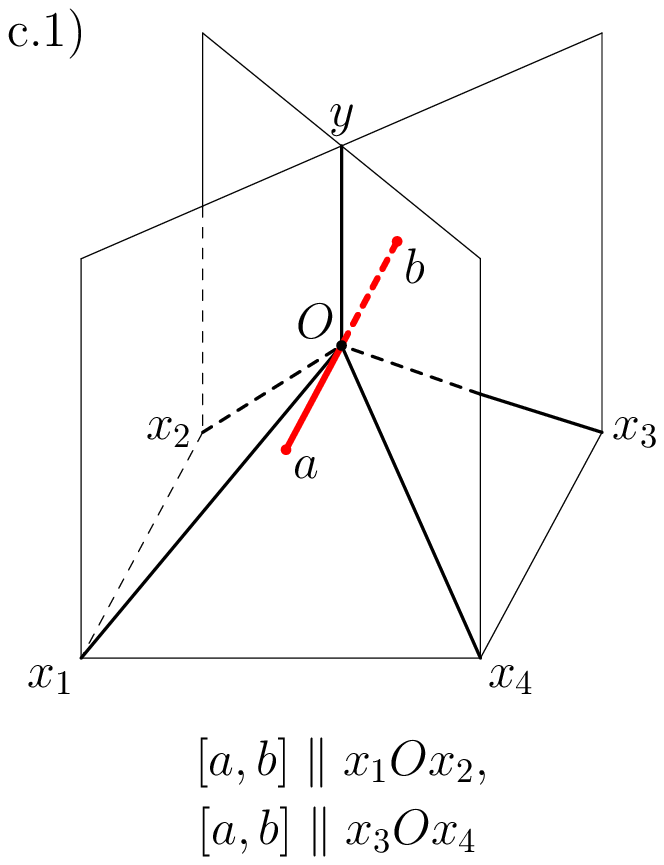}}
\end{minipage} \qquad%
\begin{minipage}{0.32\textwidth}
\centerline{\includegraphics[height=4.5cm]{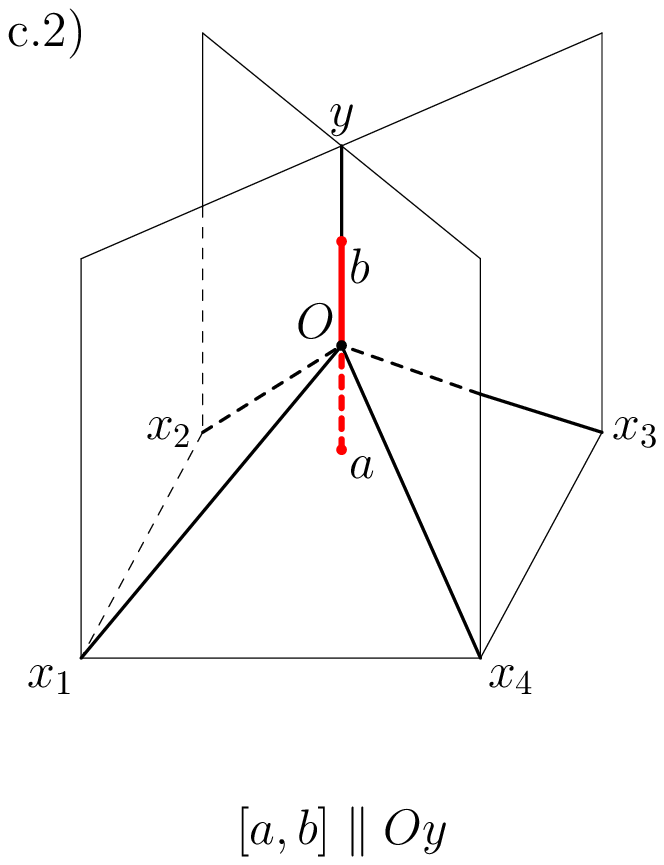}}
\end{minipage}%
\begin{minipage}{0.32\textwidth}
\centerline{\includegraphics[height=4.5cm]{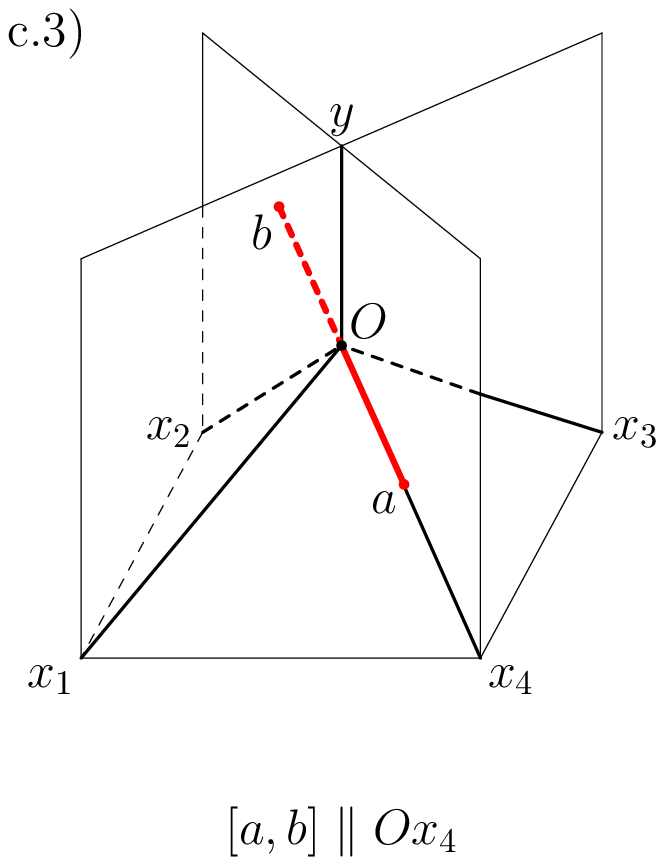}}
\end{minipage}%
}

\vspace{3mm}
\caption{Possible arrangements of free segments and $(d-3)$-faces}
\label{segm1}
\end{figure}

\begin{figure}[!t]
\centerline{%
\begin{minipage}{0.32\textwidth}
\centerline{\includegraphics[height=4.5cm]{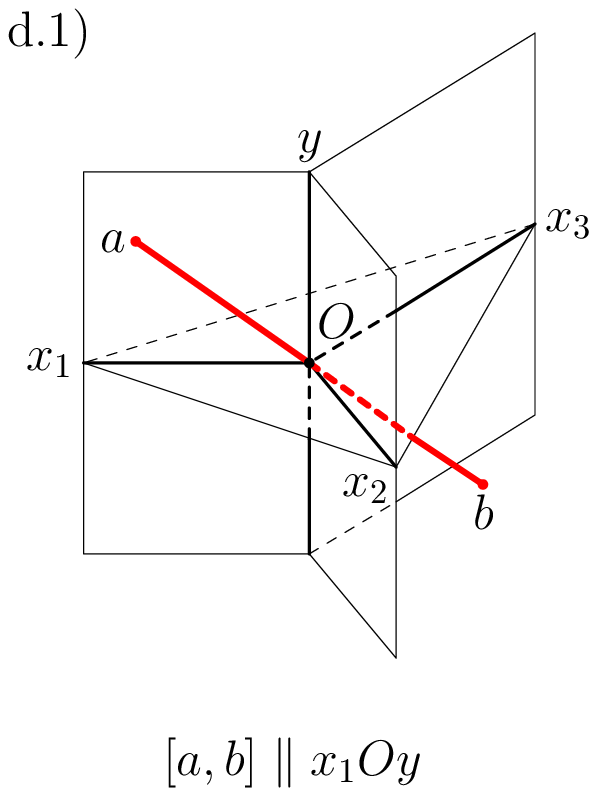}}
\end{minipage} \qquad%
\begin{minipage}{0.32\textwidth}
\centerline{\includegraphics[height=4.5cm]{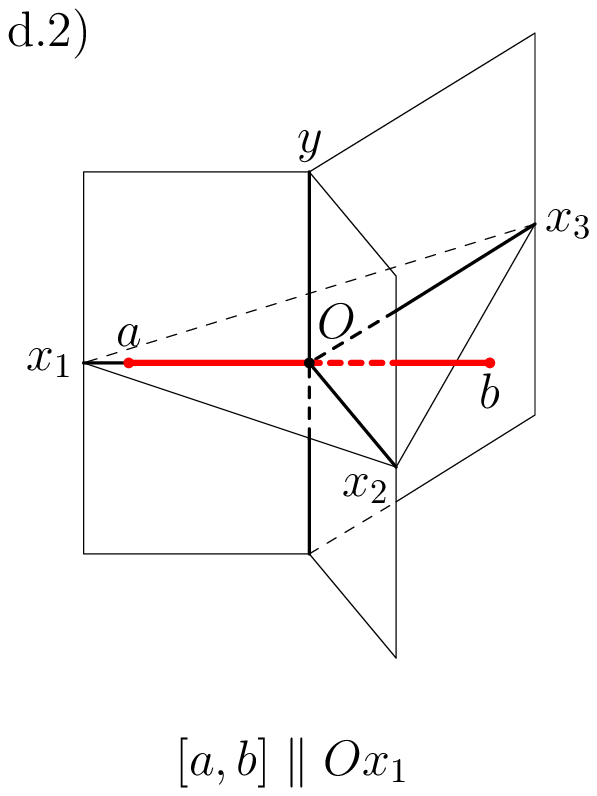}}
\end{minipage}%
\begin{minipage}{0.32\textwidth}
\centerline{\includegraphics[height=4.5cm]{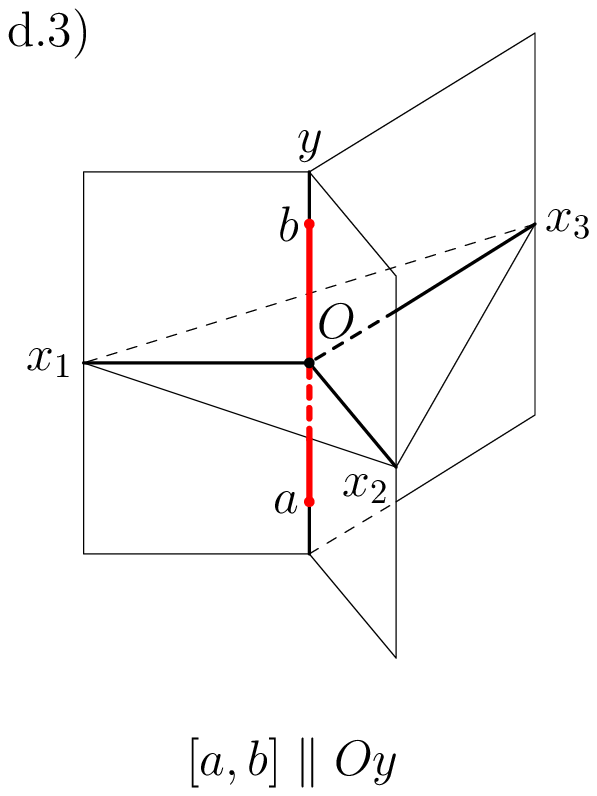}}
\end{minipage}%
}

\vspace{5mm}
\centerline{%
\begin{minipage}{0.32\textwidth}
\centerline{\includegraphics[height=4.5cm]{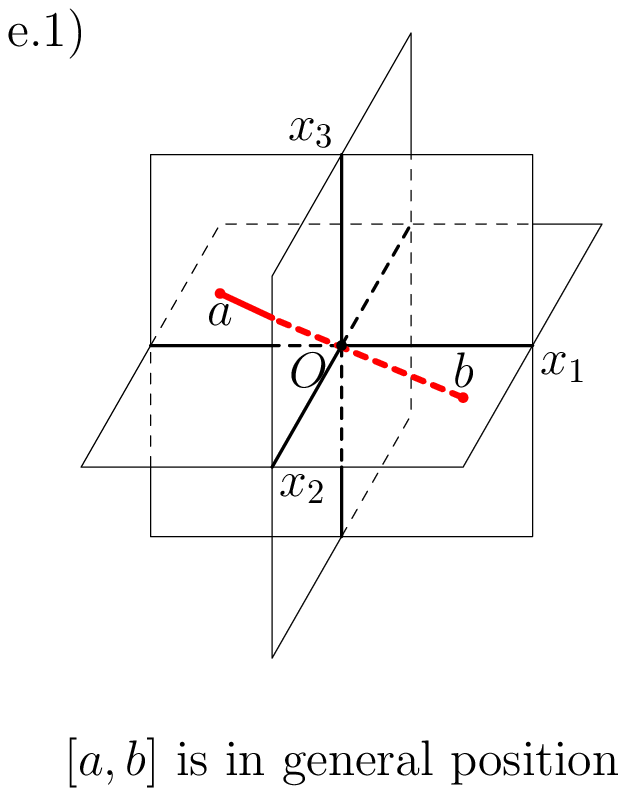}}
\end{minipage} \qquad%
\begin{minipage}{0.32\textwidth}
\centerline{\includegraphics[height=4.5cm]{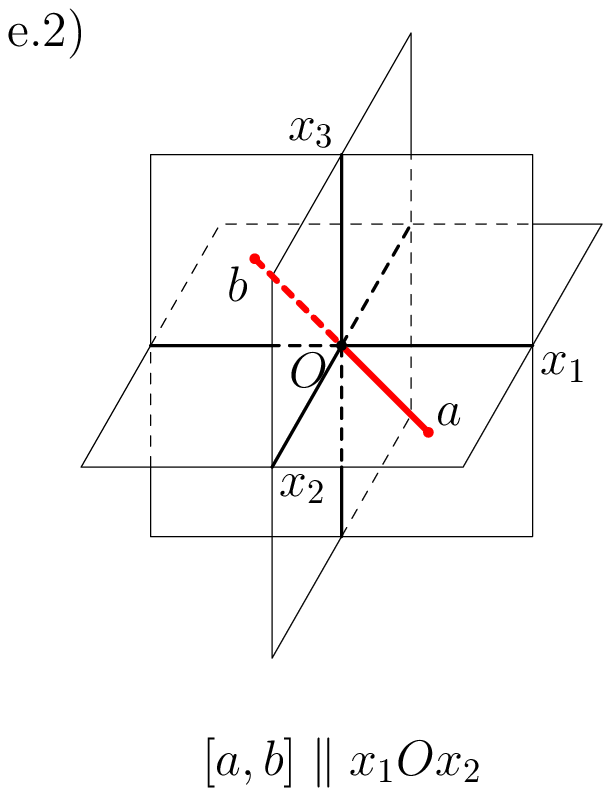}}
\end{minipage}%
\begin{minipage}{0.32\textwidth}
\centerline{\includegraphics[height=4.5cm]{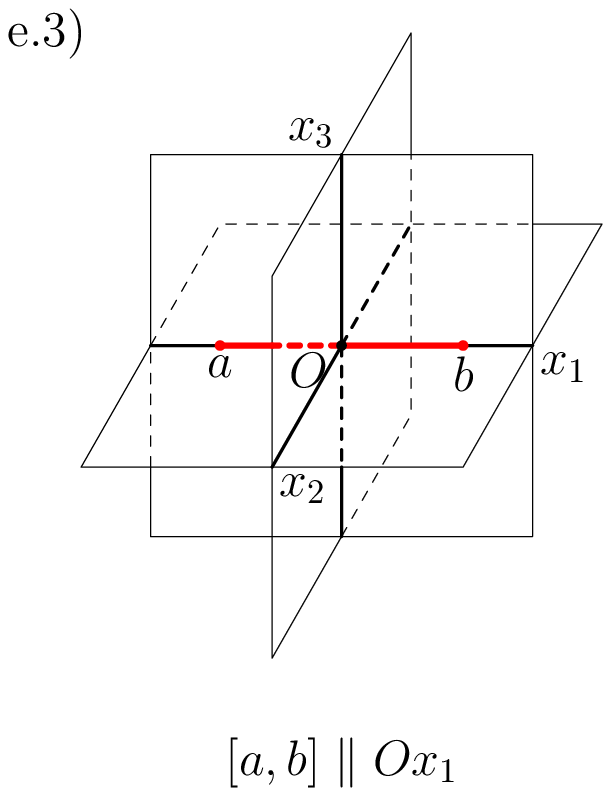}}
\end{minipage}%
\vspace{3mm}
}
\caption{Possible arrangements of free segments and $(d-3)$-faces, continued}
\label{segm2}
\end{figure}

\begin{figure}[!t]
\centerline{%
\begin{minipage}{0.32\textwidth}
\centerline{\includegraphics[height=4.5cm]{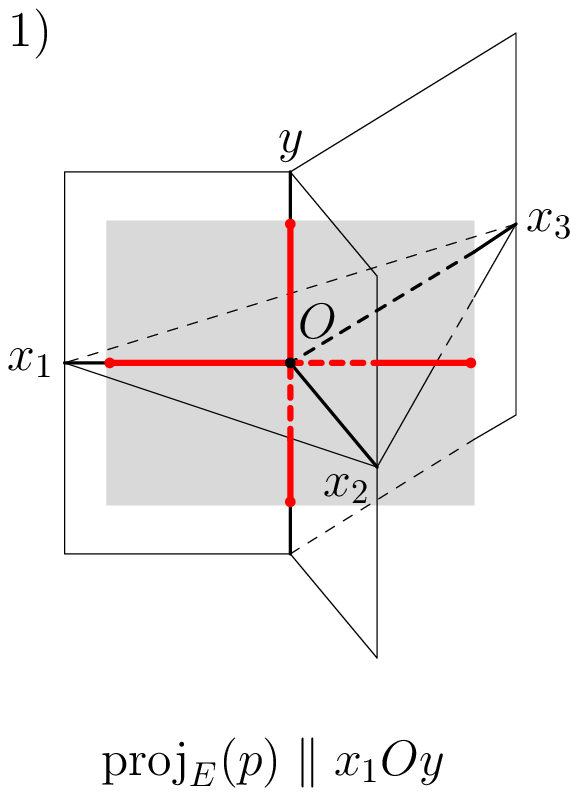}}
\end{minipage} \qquad%
\begin{minipage}{0.32\textwidth}
\centerline{\includegraphics[height=4.5cm]{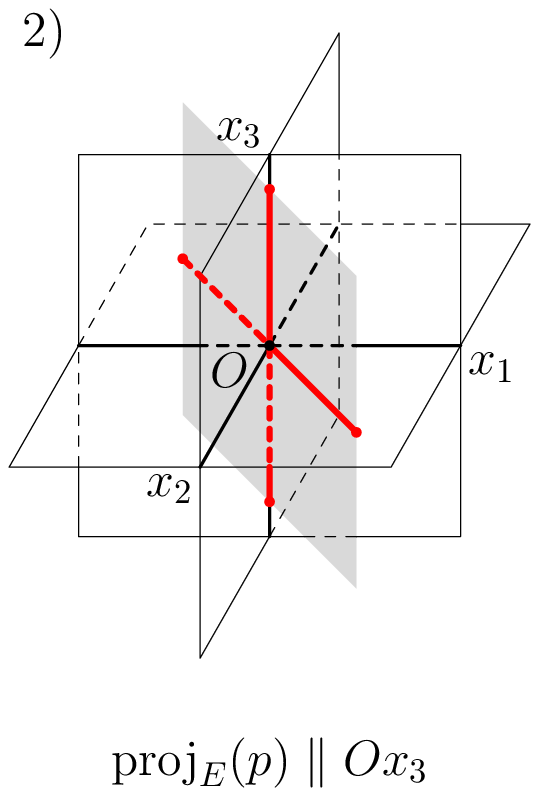}}
\end{minipage}%
\begin{minipage}{0.32\textwidth}
\centerline{\includegraphics[height=4.5cm]{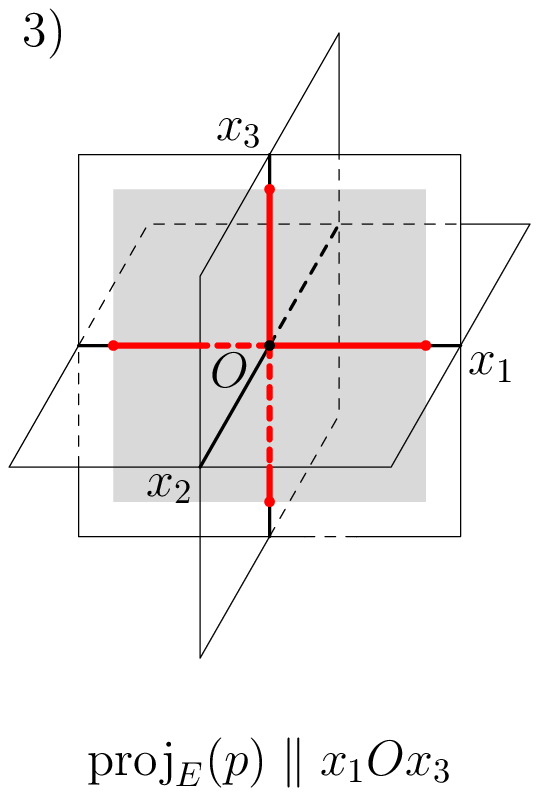}}
\end{minipage}%
}

\vspace{3mm}
\caption{Possible arrangements of $(d-3)$-faces and transversal free planes}
\label{planes}
\end{figure}

\begin{lem}\label{lem:5.3}
Let $P$ be a Voronoi parallelohedron and let $p$ be a two-dimensional perfect free space of $P$. In the notation of Lemma~\ref{lem:5.2} let
$I$, $Y_1$, $Y_2$ be segments such that $I\parallel p$, $Y_1 \parallel \ell_1$, $Y_2\parallel \ell_2$. Then
\begin{enumerate}
	\item[\rm 1.] If $G$ is a standard $(d-2)$-face of $P$ and $\mathbf s(G) \in \mathcal A_{Y_1}(P)$, then 
	$\mathbf s(G) \in \mathcal A_p(P)$.
	\item[\rm 2.] $P+Y_1$ is Voronoi in some Euclidean metric.	
	\item[\rm 3.] $p$ is a perfect free plane and $\ell_1$ and $\ell_2$ are perfect free lines of $P+Y_1$.
	\item[\rm 4.] $P+Y_1+Y_2+I$ is a parallelohedron.
\end{enumerate}
\end{lem}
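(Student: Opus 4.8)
The plan is to establish the four items in order, reserving one purely combinatorial fact for the end as the crux. Statement~2 is immediate from Corollary~\ref{cor:4.3}: since $Y_1\parallel\ell_1$ and $\ell_1$ is a perfect line, $\dim\langle\mathcal B_{Y_1}(P)\rangle=d-1$, which is exactly condition~(\ref{eq:4.1}), so $P+Y_1$ is Voronoi. For statement~1 I would argue locally at $G$. A standard $(d-2)$-face $G$ has a four-belt, so $\proj_G(P)$ is a parallelogram with $\proj_G(G)$ a vertex whose two edges are the projections of the facets $F_1,F_2$ meeting along $G$; adding a segment $J$ turns this parallelogram into a hexagon---equivalently makes $G\oplus J$ a facet---precisely when $\proj_G(J)$ is parallel to neither edge, i.e.\ when $J\nparallel F_1$ and $J\nparallel F_2$. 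Applying this to $J=Y_1\parallel\ell_1$ gives $F_1,F_2\nparallel\ell_1$, so by Lemma~\ref{lem:5.2} both are parallel to $\ell_2$; hence $\ell_2\subseteq\lin\aff F_1\cap\lin\aff F_2=\lin\aff G$ and $\mathbf s(G)\bot\ell_2$. Since also $\mathbf s(G)\in\mathcal A_{Y_1}(P)\subseteq\langle\mathcal B_{Y_1}(P)\rangle=\ell_1^{\bot}$ (the inclusion being the one proved inside Corollary~\ref{cor:4.3}), we get $\mathbf s(G)\bot\ell_1$ as well, so $\mathbf s(G)\bot p$ and $\mathbf s(G)\in\mathcal A_p(P)$.

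Next I would record the facets of $P'=P+Y_1$: they are the facets $F$ of $P$ (translated, or thickened to $F\oplus Y_1$ when $F\parallel Y_1$) together with the new facets $G\oplus Y_1$ coming from the $(d-2)$-faces $G$ semi-shaded by $Y_1$. By statement~1 each such $G$ satisfies $G\parallel\ell_2$, and as $G\oplus Y_1\supseteq Y_1\parallel\ell_1$, every new facet is parallel to $p$. Simple bookkeeping then identifies the facet vectors of those facets of $P'$ parallel to $\ell_1$, to $\ell_2$, and to $p$ as $\mathcal B_{Y_1}(P)\cup\mathcal A_{Y_1}(P)$, $\mathcal B_{Y_2}(P)\cup\mathcal A_{Y_1}(P)$ and $\mathcal B_p(P)\cup\mathcal A_{Y_1}(P)$ respectively, and these span $\ell_1^{\bot}$, $\ell_2^{\bot}$ and $p^{\bot}$ by Corollary~\ref{cor:horvath} and the definition of a perfect space. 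Freeness of $\ell_1$ for $P'$ is immediate: for any $Z\parallel\ell_1$ we have $P'+Z=P+(Y_1+Z)$, and $Y_1+Z$ is a single segment parallel to the free line $\ell_1$ of $P$, so $P'+Z$ is a parallelohedron.

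The hard part will be the rest of statement~3: that \emph{every six-belt of $P'$ contains a facet parallel to $p$}. Granting this, every six-belt of $P'$ contains a facet parallel to each direction of $p$, so by Theorem~\ref{thm:1.2} every segment parallel to $p$---in particular every segment parallel to $\ell_2$---is free for $P'$; together with the span computation above this shows that the facets parallel to $\ell_1$, $\ell_2$ and $p$ cover all six-belts and cut out exactly $\ell_1$, $\ell_2$ and $p$, so these are perfect free lines and a perfect free plane of $P'$, completing statement~3. To prove the emphasized claim I would split the six-belts of $P'$ into two kinds. A six-belt whose $(d-2)$-face is inherited from a six-belt $(d-2)$-face of $P$ already carried a facet parallel to $p$ in $P$ (perfectness of $p$ for $P$); such a facet is parallel to $Y_1$, hence survives as a facet parallel to $p$ in $P'$, and this case is routine. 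The remaining six-belts are those whose $(d-2)$-face has the form $E\oplus Y_1$ for a $(d-3)$-face $E$ of $P$---exactly the belts flagged in the remark following Theorem~\ref{thm:1.2}. For these I would invoke Lemma~\ref{lem:cases}: since $Y_1$ is parallel to a perfect line of the transversal free plane $p$, the arrangement of $\proj_E(Y_1)$ and $\proj_E(p)$ relative to $\fan(E)$ is one of the highlighted configurations of Figure~\ref{planes}, and running through these finitely many configurations over the five combinatorial types of $\fan(E)$ I would check by inspection that each hexagonal belt so produced contains a facet parallel to $p$. This finite case analysis is the main obstacle.

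Finally, statement~4 follows by iterating the lemma. Statements~2 and~3 show that $P'=P+Y_1$ is a Voronoi parallelohedron having $p$ as a two-dimensional perfect free space with the same two perfect lines $\ell_1,\ell_2$. Applying statements~2 and~3 to $P'$---with the roles of $\ell_1$ and $\ell_2$ interchanged, adding $Y_2\parallel\ell_2$---shows that $P''=P+Y_1+Y_2$ is a parallelohedron for which $p$ remains a free plane. Since $I\parallel p$, the segment $I$ is free for $P''$, whence $P+Y_1+Y_2+I=P''+I$ is a parallelohedron. This reuse is legitimate because statements~1--3 hold for an arbitrary Voronoi parallelohedron with a two-dimensional perfect free space and do not rely on statement~4, so there is no circularity.
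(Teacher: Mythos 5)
Your overall architecture matches the paper's: statement 2 via Corollary~\ref{cor:4.3}, statement 3 by checking that every six-belt of $P+Y_1$ contains a facet parallel to $p$, and statement 4 by iterating the lemma with $\ell_1$ and $\ell_2$ interchanged. Your argument for statement 1 is genuinely different and works: the paper deduces it from the continuity/discontinuity dichotomy of Lemma~\ref{lem:5.1} (if $\mathbf s(G)\notin\mathcal A_p(P)$, then $Y_1$ would be a continuity point of the hyperplane function, contradicting that $Y_1$ is parallel to a perfect line), whereas you argue locally at $G$ via the four-belt projection and Lemma~\ref{lem:5.2}(2). Your route has the advantage of producing the stronger fact $G\parallel\ell_2$ explicitly, which you then reuse to show every new facet $G\oplus Y_1$ is parallel to $p$ --- the paper proves that fact separately inside its ``four-belt becomes six-belt'' case. (Minor caveat: your ``precisely when'' is only correct as an ``only if'' for a fixed $G$, since only one of the two pairs of opposite $(d-2)$-faces of the four-belt is actually extruded; but you only use the necessary direction, so this does not matter.)

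The genuine gap is in the part you yourself flag as the crux: the six-belts of $P+Y_1$ spanned by $(d-2)$-faces of the form $E\oplus Y_1$. You defer this to an inspection of the highlighted configurations of Figure~\ref{planes}, but Figure~\ref{planes} and item 2 of Lemma~\ref{lem:cases} only classify the situation when $p$ is \emph{transversal} to $E$, and your sketch silently assumes this (``the transversal free plane $p$''). Since $\dim p=2$ and $\dim E=d-3$, the intersection $p\cap\lin\aff E$ can be a line $\ell$ (necessarily $\neq\ell_1$, as $Y_1\nparallel E$), and this non-transversal subcase is not covered by your plan. The paper handles it by a separate direct argument: every facet of such a belt is parallel to $E$, hence to $\ell$, and also to $Y_1$, hence to $\langle\ell,\ell_1\rangle=p$. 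In the transversal subcase the paper's conclusion is that the configuration forces $E\oplus Y_1$ to span a four-belt, so no six-belt arises at all --- which is the inspection you promised but did not carry out. As written, then, statement 3 (and with it statement 4, which depends on it) rests on an unverified case analysis with one subcase missing from the plan; supplying the non-transversal argument above and performing the finite check for the transversal case would close the proof.
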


\begin{proof}
If, on the contrary, $\mathbf s(G) \notin \mathcal A_p(P)$, then, by the argument of Lemma~\ref{lem:5.2}, $I = Y_1$ is the continuity point of 
$$\left\langle \mathcal A_I(P) \cup \mathcal B_I(P) \right\rangle $$
as a function of $I$. But $Y_1$ is parallel to a perfect line, so by Lemma~\ref{lem:5.1}, $I=Y_1$ is not a continuity point. A contradiction
gives statement 1.

Statement 2 is an immediate consequence of the definition of a perfect line and Corollary~\ref{cor:4.3}.

For the proof of statement 3, we first need to prove that every six-belt of $P+Y_1$ contains a facet parallel to $p$. Consider several cases.

Let a six-belt of $P+Y_1$ be inherited from a six-belt of $P$. Since $p$ is a perfect space and $Y_1 \parallel p$, then indeed such a six-belt 
contains a facet parallel to $p$. 

Let a six-belt of $P+Y_1$ be inherited from a four-belt of $P$. Such a six-belt contains a facet $G\oplus Y_1$, where $G$ is a standard $(d-2)$-face of
$P$. $G$ spans a four-belt of $P$ with no facet of this belt parallel to $Y_1$. According to Lemma~\ref{lem:5.2}, statement 2, all facets of the four-belt
of $P$ spanned by $G$ are parallel to $Y_2$. Thus $G \parallel Y_2$. As a result we have 
$$G\oplus Y_1 \parallel Y_1 \quad \text{and} \quad G\oplus Y_1 \parallel Y_2,$$
and hence $G\oplus Y_1 \parallel p$.

The last possible case occurs if a six-belt of $P+Y_1$ is spanned by a $(d-2)$-face of form $E\oplus Y_1$, where $E$ is a $(d-3)$-face of $P$. In this case $E$
and $Y_1$ can be arranged in the following ways reflected in Figures~\ref{segm1}~--~\ref{segm2}: c.1), d.1), e.1) (here we refer to Lemma~\ref{lem:cases}). 

Consider two subcases. First let $p$ not be transversal to $E$. Then $\ell = p \cap \lin \aff E$ is a line. Consider an arbitrary facet of the six-belt of 
$P+Y_1$ spanned by $E\oplus Y_1$. It has one of the forms $F + Y_1$ or $G\oplus Y_1$, where $F$ is a facet of $P$, respectively, $G$ is a standard
$(d-2)$-face of $P$. This facet is parallel to $E$ and therefore to $\ell$. Besides, it is parallel to $Y_1$. So it is parallel to $p$.
Consequently, $\mathbf s(F)$ or $\mathbf s(G)$ is orthogonal to $p$.

In the second subcase $p$ is transversal to $E$. $P+Y_1$ has a $(d-2)$-face $E\oplus Y_1$ only if the arrangement of $p$ corresponds to the case 2) in
Figure~\ref{planes} and the arrangement of $Y_1$ corresponds to the case e.2) in Figure~\ref{segm2}. But then $E\oplus Y_1$ spans a four-belt of $P$. Hence
no six-belt is possible in this subcase.

In addition notice that each of the lines $\ell_1$ and $\ell_2$ is parallel to more facets of $P+Y_1$ than a generic line in $p$. Hence
$\ell_1$ and $\ell_2$ are perfect free lines for $P+Y_1$.

By the same argument applied to $P+Y_1$, the parallelohedron $P+Y_1+Y_2$ is Voronoi and has $p$ as a perfect free space. Thus
$P+Y_1+Y_2+I$ is a parallelohedron, which is exactly statement 4.

\end{proof}

\begin{lem}\label{lem:5.4}
Let $P$ be a Voronoi parallelohedron and let $p$ be its perfect two-dimensional free space. Then $\proj_p (P)$ is a $(d-2)$-dimensional Voronoi
parallelohedron. 
\end{lem}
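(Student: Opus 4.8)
The plan is to factor the two-dimensional projection $\proj_p$ as a composition of two one-dimensional projections along the perfect lines $\ell_1$ and $\ell_2$ supplied by Lemma~\ref{lem:5.2}, and to carry the Voronoi property through each factor by means of Lemma~\ref{lem:4.4}. Fix segments $Y_1\parallel \ell_1$ and $Y_2\parallel \ell_2$ as in Lemma~\ref{lem:5.3}. Choose the image space $q_1$ of $\proj_{Y_1}$ to be a $(d-1)$-plane containing $\ell_2$ (possible since $\ell_1\cap\ell_2=\{\mathbf 0\}$), and then let $\proj_{Y_2}$ act inside $q_1$ with image a $(d-2)$-plane $q_2$. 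Because $\ell_1\oplus\ell_2=p$, the composite $\proj_{Y_2}\circ\proj_{Y_1}$ is precisely a projection along $p$ onto $q_2$, i.e. a realization of $\proj_p$. Since Lemma~\ref{lem:4.4} asserts the Voronoi property for \emph{every} choice of image space, it suffices to prove that this particular realization of $\proj_p(P)$ is Voronoi.

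For the first factor, $\ell_1$ is a perfect free line, so $\dim\langle \mathcal B_{Y_1}(P)\rangle=d-1$ and Corollary~\ref{cor:4.3} gives that $P+Y_1$ is Voronoi. Applying Lemma~\ref{lem:4.4} to the pair $(P,\,P+Y_1)$ shows that $Q_1:=\proj_{Y_1}(P)$ is a $(d-1)$-dimensional Voronoi parallelohedron in $q_1$.

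The crux is to make the second projection legitimate, i.e. to show that $Q_1+Y_2$ is again Voronoi (it is automatically a $(d-1)$-parallelohedron, being $\proj_{Y_1}$ of the parallelohedron $P+Y_2$ along the free direction $\ell_1$, cf. Corollary~\ref{cor:horvath}). Since $Y_2\subset\ell_2\subset q_1$ and $\proj_{Y_1}$ restricts to the identity on $q_1$, one has $Q_1+Y_2=\proj_{Y_1}(P+Y_2)$. Now $P+Y_2$ is Voronoi because $\ell_2$ is a perfect line of $P$ (Corollary~\ref{cor:4.3}); moreover, by Lemma~\ref{lem:5.3}, statement 3, applied with the roles of $Y_1$ and $Y_2$ interchanged, $\ell_1$ is a perfect free line of $P+Y_2$, whence $(P+Y_2)+Y_1$ is Voronoi, again by Corollary~\ref{cor:4.3}. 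Feeding the pair $(P+Y_2,\,P+Y_2+Y_1)$ into Lemma~\ref{lem:4.4} yields that $Q_1+Y_2=\proj_{Y_1}(P+Y_2)$ is Voronoi. Finally, applying Lemma~\ref{lem:4.4} inside $q_1$ to the pair $(Q_1,\,Q_1+Y_2)$ shows that $\proj_{Y_2}(Q_1)$ is a $(d-2)$-dimensional Voronoi parallelohedron. By the first paragraph this is exactly $\proj_p(P)$, which completes the argument.

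The main obstacle is the middle step: establishing that $Q_1+Y_2$ is Voronoi. Projection need not preserve the perfect/free structure in any transparent way, so rather than track the belt combinatorics of $Q_1$ directly, I realize $Q_1+Y_2$ as a projection of $P+Y_2$ and transport the Voronoi property down from the two genuinely $d$-dimensional parallelohedra $P+Y_2$ and $P+Y_1+Y_2$, whose Voronoi-ness is guaranteed by the perfect-line hypothesis through Lemma~\ref{lem:5.3} and Corollary~\ref{cor:4.3}.
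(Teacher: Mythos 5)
Your proof is correct and follows essentially the same route as the paper: both factor $\proj_p$ through $\proj_{Y_1}$ and $\proj_{Y_2}$, establish that $P$, $P+Y_1$, $P+Y_2$ and $P+Y_1+Y_2$ are Voronoi via the perfect-line property (Lemma~\ref{lem:5.3}/Corollary~\ref{cor:4.3}), and then apply Lemma~\ref{lem:4.4} three times --- to $(P,P+Y_1)$, to $(P+Y_2,P+Y_1+Y_2)$, and finally to $(\proj_{Y_1}(P),\proj_{Y_1}(P+Y_2))$. The only cosmetic difference is that you invoke Corollary~\ref{cor:4.3} directly where the paper cites Lemma~\ref{lem:5.3}, statement~2, which is itself just that corollary.
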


\begin{proof}
In fact, we want to check that $P$, $P+Y_1$, $\proj_{Y_1}(P)$ and $\proj_{Y_1}(P+Y_2)$ are Voronoi parallelohedra.

$P$ is Voronoi by assumption. $P+Y_1$ is Voronoi by Lemma~\ref{lem:5.3}, statement~2. Application of Lemma~\ref{lem:4.4} gives that
$\proj_{Y_1}(P)$ is Voronoi. Further, by the argument of Lemma~\ref{lem:5.3}, $P+Y_2$ is Voronoi and $\ell_1$ is its perfect free line.
Thus statement 2 of Lemma~\ref{lem:5.3} gives that $P+Y_1+Y_2$ is Voronoi. Lemma~\ref{lem:4.4} applied to $P+Y_2$ and $P+Y_1+Y_2$ implies that 
$\proj_{Y_1}(P+Y_2)$ is Voronoi. It remains to apply Lemma~\ref{lem:4.4} for the third time --- to $\proj_{Y_1}(P)$ and
$$\proj_{Y_1}(P)+\proj_{Y_1}(Y_2) = \proj_{Y_1}(P+Y_2).$$

\end{proof}

Let $I$ be a segment parallel to $p$, but not parallel to $Y_1$ and $Y_2$. For $j = 1,2$ let
$$\mathcal C^j_I(P) = \{ \mathbf s(F) : F\parallel Y_j\; \text{and}\; \mathbf s(F)\in \mathcal C_I(P) \}.$$
In other words, 
$$\mathcal C^j_I(P) = \mathcal C_I(P) \cap \mathcal B_{Y_j} (P).$$

The last formula immediately implies the following.

\begin{lem}\label{lem:5.5}
$\lin\aff \mathcal C^j_I(P) \parallel \left\langle \mathcal B_p (P) \right\rangle$ for $j = 1,2$.
\end{lem}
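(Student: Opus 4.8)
The plan is to prove the equivalent inclusion $\lin\aff\mathcal C^j_I(P)\subseteq\langle\mathcal B_p(P)\rangle$; since $\langle\mathcal B_p(P)\rangle$ is the $(d-2)$-dimensional orthogonal complement of $p$, this is exactly the asserted parallelism. Concretely, I must show that every difference $\mathbf s(F)-\mathbf s(F')$ of two vectors of $\mathcal C^j_I(P)$ lies in $\langle\mathcal B_p(P)\rangle=p^{\bot}$. Throughout, the orthogonality is the one making $P$ Voronoi; write $\mathbf u_1,\mathbf u_2$ for direction vectors of the perfect lines $\ell_1,\ell_2$, so that $p^{\bot}\subseteq\ell_j^{\bot}$ because $\ell_j\subseteq p$.

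The engine is the factorization $\mathcal C^j_I(P)=\mathcal C_I(P)\cap\mathcal B_{Y_j}(P)$, used in two directions. On one hand $\mathcal C^j_I(P)\subseteq\mathcal C_I(P)$, so Lemma~\ref{lem:3.2} gives $\lin\aff\mathcal C^j_I(P)\subseteq\lin\aff\mathcal C_I(P)\subseteq\langle\mathcal A_I(P)\cup\mathcal B_I(P)\rangle$. On the other hand $\mathcal C^j_I(P)\subseteq\mathcal B_{Y_j}(P)$, so every vector of $\mathcal C^j_I(P)$ is orthogonal to $\ell_j$, whence $\lin\aff\mathcal C^j_I(P)\subseteq\ell_j^{\bot}$. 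Combining the two,
$$\lin\aff\mathcal C^j_I(P)\subseteq\langle\mathcal A_I(P)\cup\mathcal B_I(P)\rangle\cap\ell_j^{\bot},$$
and it therefore suffices to identify this intersection with $\langle\mathcal B_p(P)\rangle$.

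The inclusion $\langle\mathcal B_p(P)\rangle\subseteq\langle\mathcal A_I(P)\cup\mathcal B_I(P)\rangle\cap\ell_j^{\bot}$ is immediate from $\mathcal B_p(P)\subseteq\mathcal B_I(P)$ and $\ell_j\subseteq p$. For the reverse inclusion I would observe that both $\langle\mathcal A_I(P)\cup\mathcal B_I(P)\rangle$ and $\ell_j^{\bot}$ are hyperplanes containing the $(d-2)$-dimensional space $\langle\mathcal B_p(P)\rangle$; hence, as soon as the two hyperplanes are shown to be \emph{distinct}, their intersection is forced to be precisely $\langle\mathcal B_p(P)\rangle$. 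To prove distinctness I would invoke the description extracted in the proof of Lemma~\ref{lem:5.2}: for $I\parallel p$ with $I\nparallel\ell_1,\ell_2$ one has $\langle\mathcal A_I(P)\cup\mathcal B_I(P)\rangle=\langle\mathcal B_p(P)\cup\{\mathbf s(G)\}\rangle$, where $G$ is a standard $(d-2)$-face whose rectangular four-belt has its two adjacent facets $F\parallel\ell_1$ and $F'\parallel\ell_2$, so that $\mathbf s(G)=\mathbf s(F)+\mathbf s(F')$ with $\mathbf s(F)\bot\mathbf s(F')$ (and similarly with a sign change for the other rotation arc of $I$). Since $F'\parallel\ell_2$ but $F'\nparallel\ell_1$, its facet vector is not orthogonal to $\ell_1$, giving $\mathbf s(G)\cdot\mathbf u_1=\mathbf s(F')\cdot\mathbf u_1\neq0$, so $\mathbf s(G)\notin\ell_1^{\bot}$; symmetrically $\mathbf s(G)\notin\ell_2^{\bot}$. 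Thus $\langle\mathcal A_I(P)\cup\mathcal B_I(P)\rangle\neq\ell_j^{\bot}$ for both $j$, and the identification follows.

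The main obstacle is exactly this transversality step; everything else is a formal combination of Lemma~\ref{lem:3.2} with the orthogonality built into $\mathcal B_{Y_j}(P)$. Geometrically, the content is that the single extra direction by which $\langle\mathcal A_I(P)\cup\mathcal B_I(P)\rangle$ exceeds $\langle\mathcal B_p(P)\rangle$ projects into $p$ to a genuinely \emph{diagonal} direction, having nonzero components both along $\mathbf u_1^{\bot}$ and along $\mathbf u_2^{\bot}$, rather than to one of the directions $\mathbf u_j^{\bot}$. I expect the one point needing care is to certify that the relevant four-belt is really mixed --- one adjacent facet parallel to $\ell_1$, the other to $\ell_2$, and neither parallel to $p$. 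This is exactly what Lemma~\ref{lem:5.2} supplies: such $G$ is semi-shaded by $I$, so its belt contains no facet parallel to $I$ and hence none parallel to $p$, which excludes the degenerate configurations in which $\mathbf s(G)$ would be orthogonal to one of the $\ell_j$ and the argument would collapse.
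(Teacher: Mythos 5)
Your argument has the same skeleton as the paper's proof: both place $\lin\aff \mathcal C^j_I(P)$ inside the intersection of the two hyperplanes $\left\langle \mathcal A_I(P) \cup \mathcal B_I(P) \right\rangle$ and $\left\langle \mathcal A_{Y_j}(P) \cup \mathcal B_{Y_j}(P) \right\rangle = \ell_j^{\bot}$, note that each contains the $(d-2)$-dimensional space $\left\langle \mathcal B_p(P) \right\rangle$, and conclude once the two hyperplanes are known to be distinct. The paper leaves distinctness essentially implicit (it is what the proof of Lemma~\ref{lem:5.1} establishes), whereas you argue it through the four-belt of $G$, and that is where your write-up has a genuine soft spot. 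Knowing that no facet of $\belt(G)$ is parallel to $p$ does \emph{not} by itself exclude the configuration in which both facets adjacent to $G$ are parallel to $\ell_1$ and neither to $\ell_2$ (Lemma~\ref{lem:5.2}, statement~2 permits this); in that configuration $\mathbf s(G) = \mathbf s(F)+\mathbf s(F')$ would be orthogonal to $\ell_1$ and your transversality computation for $j=1$ would collapse. The configuration is in fact impossible, but ruling it out requires an extra argument. The cleanest fix is the one already used in the proof of Lemma~\ref{lem:5.1}: since $\ell_j$ is perfect, there is a facet $F$ with $\mathbf s(F) \in \mathcal B_{Y_j}(P)\setminus \mathcal B_p(P)$; such $F$ satisfies $F \nparallel I$ (because $\lin\aff F \cap p = \ell_j$ while $I \nparallel \ell_j$), so Lemma~\ref{lem:3.4} gives $\mathbf s(F) \notin \left\langle \mathcal A_I(P) \cup \mathcal B_I(P) \right\rangle$, while clearly $\mathbf s(F) \in \ell_j^{\bot}$ --- hence the two hyperplanes differ and the rest of your proof goes through. (Alternatively, the particular $G$ constructed in the proof of Lemma~\ref{lem:5.2} does have a mixed belt by construction, since $\ell_1$ and $\ell_2$ are defined there as the directions $p \cap \lin\aff F$ and $p \cap \lin\aff F'$ of its two adjacent facets; but then you must still check that this $G$ is semi-shaded by your $I$ and not only by the auxiliary $I_0$ used there.)
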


\begin{proof}
Indeed, 
$$\lin\aff \mathcal C^j_I(P) \subseteq  \left\langle \mathcal A_I(P) \cup \mathcal B_I(P) \right\rangle \cap 
\left\langle \mathcal A_{Y_j}(P) \cup \mathcal B_{Y_j}(P) \right\rangle.$$
The right part is an intersection of two different hyperplanes, each parallel to $\left\langle \mathcal B_p (P) \right\rangle$.
Thus the intersection is exactly $\left\langle \mathcal B_p (P) \right\rangle$.

\end{proof}

\begin{lem}\label{lem:5.6}
Let $P$ be a Voronoi parallelohedron and let $p$ be its perfect two-dimensional free space. In addition, let $P$ be centered at $\mathbf 0$. 
Assume that $I$ is a segment parallel to $P$, but not parallel to $Y_1$ and $Y_2$. With $\mathcal C^j_I(P)$ defined as above, choose
$$\mathbf w_j \in \mathcal C^j_I(P) \quad \text{for} \quad j=1,2 \quad \text{and}$$
$$\mathbf t_j \in \Lambda(P) \cap \left(\mathbf w_j + \left\langle \mathcal A_p(P) \cup \mathcal B_p(P) \right\rangle \right).$$
Then, if $P_j = P+\mathbf t_j$, one has
$$\proj_p (P\cap P_1 \cap P_2) = \proj_p(P) \cap \proj_p(P_1) \cap \proj_p(P_2).$$
\end{lem}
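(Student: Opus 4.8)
The plan is to realize $\proj_p$ as a composition of two one-dimensional projections along the two perfect lines of $p$, and to apply the layering device from the proof of Lemma~\ref{lem:3.5} twice. In the notation of Lemma~\ref{lem:5.2} write $p=\langle\ell_1,\ell_2\rangle$, and fix $Y_1\parallel\ell_1$, $Y_2\parallel\ell_2$. Put $\bar Y_2=\proj_{Y_1}(Y_2)$; since $\ell_1\subset p$ and $\proj_{Y_1}$ annihilates exactly the $\ell_1$-direction, one has $\proj_{Y_1}(p)=\lin\bar Y_2$ and hence a factorization $\proj_p=\proj_{\bar Y_2}\circ\proj_{Y_1}$, the second projection taken inside the image space of the first. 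Write $Q=\proj_{Y_1}(P)$ and $Q_j=\proj_{Y_1}(P_j)$. By Lemma~\ref{lem:5.3}(2,3) and Lemma~\ref{lem:5.4}, $Q$ is a parallelohedron and $\bar Y_2=\proj_{Y_1}(I)$ is free for $Q$, so the machinery of Lemma~\ref{lem:3.5} is available for the pair $(Q,\bar Y_2)$ as well.

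First I would carry out the projection along $Y_1$. The essential point is the placement of $P$, $P_1$, $P_2$ in the two $Y_1$-layers used in the proof of Lemma~\ref{lem:3.5}. Since $\mathbf w_1\in\mathcal C^1_I(P)=\mathcal C_I(P)\cap\mathcal B_{Y_1}(P)$ we have $\mathbf w_1\bot Y_1$, and as $\mathbf t_1-\mathbf w_1\in\langle\mathcal A_p(P)\cup\mathcal B_p(P)\rangle\subset Y_1^{\bot}$, Lemma~\ref{lem:3.3} (which identifies $\mathbb Z(\mathcal A_{Y_1}(P)\cup\mathcal B_{Y_1}(P))$ with $\Lambda(P)\cap Y_1^{\bot}$) gives $\mathbf t_1\in\mathbb Z(\mathcal A_{Y_1}(P)\cup\mathcal B_{Y_1}(P))$; thus $P$ and $P_1$ lie in the same base layer $\mathcal L_0$. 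On the other hand $\mathbf w_2\in\mathcal B_{Y_2}(P)$ is not orthogonal to $Y_1$, for otherwise the cap vector $\mathbf w_2$ would be orthogonal to all of $p$, which is impossible since $\mathbf e_I\cdot\mathbf w_2\neq 0$. Hence $\mathbf w_2$ is the vector of a facet visible along $Y_1$, and by Lemma~\ref{lem:3.3} it is a single cap step, so $P_2$ lies in an adjacent layer $\mathcal L_1$. Consequently $P\cap P_1\cap P_2\subseteq P\cap P_2$ is contained in the separating complex $\mathcal K$, on which $\proj_{Y_1}$ is injective and which satisfies $\proj_{Y_1}(P'\cap|\mathcal K|)=\proj_{Y_1}(P')$ for the cells in either layer. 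Injectivity turns the image of the triple intersection into the intersection of the three images, yielding
$$\proj_{Y_1}(P\cap P_1\cap P_2)=Q\cap Q_1\cap Q_2.$$

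Next I would repeat the argument for the pair $(Q,\bar Y_2)$, and here the decisive --- and most delicate --- requirement is that $Q_1$ and $Q_2$ now lie in one and the same $\bar Y_2$-layer, with $Q$ alone in the base layer. This is where I use that $\mathbf w_1$ and $\mathbf w_2$ belong to the common cap $\pcap_I(P)$, i.e.\ $\mathbf w_1,\mathbf w_2\in\mathcal C_I(P)$: both $P_1$ and $P_2$ are $I$-cap translates of $P$, and since $\proj_{Y_1}(I)\parallel\bar Y_2$ the projection along $Y_1$ sends the $I$-cap side of $P$ to the $\bar Y_2$-cap side of $Q$. Hence $Q_1$ and $Q_2$ sit on the same ($\bar Y_2$-upper) side of $Q$; that each is a single step, so that both land in the adjacent layer $\mathcal L_1'$, again follows from Lemma~\ref{lem:3.3} applied to $Q$, together with the fact that the $p$-components of $\mathbf w_1$ and $\mathbf w_2$ are respectively parallel and non-orthogonal to $\bar Y_2$. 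With $Q\in\mathcal L_0'$ and $Q_1,Q_2\in\mathcal L_1'$, the triple intersection $Q\cap Q_1\cap Q_2\subseteq Q\cap Q_1$ lies in the corresponding separating complex $\mathcal K'$, and the same injectivity argument gives
$$\proj_{\bar Y_2}(Q\cap Q_1\cap Q_2)=\proj_{\bar Y_2}(Q)\cap\proj_{\bar Y_2}(Q_1)\cap\proj_{\bar Y_2}(Q_2).$$
Composing this with the display from the previous step through $\proj_p=\proj_{\bar Y_2}\circ\proj_{Y_1}$ yields the assertion $\proj_p(P\cap P_1\cap P_2)=\proj_p(P)\cap\proj_p(P_1)\cap\proj_p(P_2)$.

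I expect the main obstacle to be exactly the layer bookkeeping in the second step: that $Q_1$ and $Q_2$ fall into the same $\bar Y_2$-layer adjacent to $Q$. Arguing this metrically is awkward because $\ell_1$ and $\ell_2$ need not be orthogonal, so I would instead argue it combinatorially, reading off the position of a cap facet of $P$ parallel to $\ell_1$ or to $\ell_2$ relative to $p$ from the arrangement analysis of Lemma~\ref{lem:cases} (Figures~\ref{segm1}--\ref{planes}) and from $\proj_{Y_1}(I)\parallel\bar Y_2$. A secondary point I would need to verify is that $Q_2$ is a genuine tile of the projected tiling (equivalently, that the adjacent $Y_1$-layer projects onto $T(Q)$ without a shift), so that $\mathcal L_1'$ really contains it; this commensurability is where I would invoke that $\ell_1$ is a perfect free line of $P$. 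Everything else --- the properties of the separating complexes, the single-step claims, and that $Q$ is a Voronoi parallelohedron with free segment $\bar Y_2$ --- reduces to Lemmas~\ref{lem:3.3}, \ref{lem:3.5}, \ref{lem:5.2}, \ref{lem:5.3} and \ref{lem:5.4}.
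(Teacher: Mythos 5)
Your first step (projecting along $Y_1$) goes through, but the second step contains a genuine gap, and it is precisely the point you set aside as ``secondary''. After projecting along $Y_1$, the tiles $Q=\proj_{Y_1}(P)$ and $Q_1=\proj_{Y_1}(P_1)$ do lie in one tiling $T(Q)$, since $\mathbf t_1\in\Lambda(P)\cap\left\langle\mathcal A_{Y_1}(P)\cup\mathcal B_{Y_1}(P)\right\rangle=\mathbb Z(\mathcal A_{Y_1}(P)\cup\mathcal B_{Y_1}(P))$ by Lemma~\ref{lem:3.3}. But $Q_2=\proj_{Y_1}(P_2)$ does not: $\mathbf w_2$ is not orthogonal to $Y_1$, so $\mathbf t_2$ lies in the coset $\pm\mathbf t+\mathbb Z(\mathcal A_{Y_1}(P)\cup\mathcal B_{Y_1}(P))$ with $\mathbf t$ the generator transversal to the $Y_1$-layers, and the layer $\mathcal L_1$ projects to the \emph{shifted} tiling $T(Q)+\proj_{Y_1}(\mathbf t)$. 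This coincides with $T(Q)$ only if $\proj_{Y_1}(\mathbf t)\in\Lambda(Q)$, i.e.\ only if $\Lambda(P)$ contains a nonzero vector parallel to $\ell_1$. Perfectness of $\ell_1$ gives no such commensurability: $\ell_1$ is an intersection of facet hyperplanes and need not meet the lattice. So in general $Q$, $Q_1$, $Q_2$ are not three tiles of a single tiling, there are no layers $\mathcal L_0'$, $\mathcal L_1'$ containing them, and no separating complex $\mathcal K'$ to which the injectivity argument of Lemma~\ref{lem:3.5} could be applied. The identity you need at that stage --- a triple intersection projecting to the intersection of the three projections --- is of the same nature as the lemma itself and does not follow from the two-layer machinery.

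The paper avoids this by taking the \emph{inner} projection along $I$ rather than along $Y_1$. Then $P_1$ and $P_2$ lie in the same $I$-layer $\mathcal L_1$ adjacent to the layer of $P$, and $\mathbf t_1-\mathbf t_2\in\Lambda(P)\cap\left\langle\mathcal A_I(P)\cup\mathcal B_I(P)\right\rangle=\mathbb Z(\mathcal A_I(P)\cup\mathcal B_I(P))$ by Lemmas~\ref{lem:3.2} and~\ref{lem:3.3}; hence $\proj_I(P_1)$ and $\proj_I(P_2)$ genuinely are two tiles of one tiling by translates of $\proj_I(P)$, sitting in two adjacent $\proj_I(Y_1)$-sublayers $\mathcal M_1$, $\mathcal M_2$. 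Projecting their intersection along $\proj_I(Y_1)$ gives $\proj_p\bigl(P_1\cap P_2\cap|\mathcal K|\bigr)=\proj_p(P_1)\cap\proj_p(P_2)$. The tile $P$ itself is then handled by a separate containment argument, $P_1\cap P_2\cap|\mathcal K|\cap\proj_I^{-1}(P)\subset P$, proved by showing that the wall between $\mathcal M_1$ and $\mathcal M_2$ separates the two $\proj_I(Y_1)$-caps of $\proj_I(P)$; this is where the connectivity of the caps and the case analysis of Lemma~\ref{lem:cases} enter. Your outline contains no substitute for either of these two ingredients, so as written the proof does not close.
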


\begin{proof}
Consider the complex $\mathcal K$ defined in Section~\ref{sec:3}. Recall that $\mathcal K$ splits two layers 
$\mathcal L_0, \mathcal L_1 \subset T(P)$ given by formulae~(\ref{eq:3.2}). Since $P\in \mathcal L_0$ and $P_1, P_2\in \mathcal L_1$,
$$P\cap P_1 \cap P_2 \subset |\mathcal K|.$$

Set $Q = \proj_I(P)$. $Q$ is a Voronoi parallelohedron with a free segment $\proj_I(Y_1)$. (Or $\proj_I(Y_2)$, which has the same direction.)
One can easily see that the sets
$$\mathcal M_j = \{Q+\proj_I \mathbf t: \mathbf t \in \aff \mathcal C^j_I(P) \cap \Lambda(P)\} \quad \text{for} \quad j=1,2 $$
compose two layers of the same tiling of $\mathbb R^{d-1}$ by translates of $Q$. The notion of layers is the same as described in Section~\ref{sec:3}.
Call them {\it $\mathcal M_1$- and $\mathcal M_2$-layers}, respectively.

These layers are neighboring. Indeed, choose an arbitrary standard $(d-2)$-face $G$ of $P$ with
$$\mathbf s(G) \in \mathcal A_I(P) \setminus \mathcal A_p(P).$$
Then take a face $G'$ spanning the same belt as $G$ such that $\inter G' \subset \inter \pcap_I(P)$. It is not hard to see that
$\proj_I(G')$ belongs to the common boundary of the $\mathcal M_1$- and $\mathcal M_2$-layers. Consequently,
\begin{multline*}
\proj_p (P_1\cap P_2 \cap \mathcal K) = \proj_p ((P_1\cap \mathcal K)\cap (P_2 \cap \mathcal K)) = \\
\proj_{\proj_I(Y_1)} ((Q+\proj_I(\mathbf t_1)) \cap  (Q+\proj_I(\mathbf t_2))) = \\
\proj_{\proj_I(Y_1)} (Q+\proj_I(\mathbf t_1)) \cap \proj_{\proj_I(Y_1)} (Q+\proj_I(\mathbf t_2)) = \\
\proj_p (P_1) \cap \proj_p (P_2).
\end{multline*}

It remains to prove that
$$P_1\cap P_2 \cap \mathcal K \cap \proj_I^{-1}(P) \subset P.$$
So, it suffices to show that the the common boundary of the $\mathcal M_1$- and $\mathcal M_2$-layers separates the $\proj_I(Y_1)$-caps
of $Q$ from each other. It follows from the fact that each of these two caps is covered by its layer --- one by the $\mathcal M_1$-layer and 
the other --- by the $\mathcal M_2$-layer. 

We recall that $\dim Q = d-1$, so the caps of $Q$ are homogeneous $(d-2)$-dimensional complexes. Each cap is connected, so we need to prove that every two facets  of a cap (of dimension $d-2$) sharing a $(d-3)$-face belong to the same layer. This $(d-3)$-face is, obviously of form $\proj_I (E)$, where
$E$ is a $(d-3)$-face of $P$.

Of course, $p$ is transversal to $E$. By Lemma~\ref{lem:cases}, $E$ as a face of $T(P)$ can have only cubic or prismatic type of coincidence and, if $E$ is cubic,
$P$ has a facet $F$ or a standard $(d-2)$-face $G$ related to the dual cell $D(E)$ such that $\mathbf s(F) \in \mathcal B_p(P)$ (respectively, $\mathbf s(F) \in \mathcal A_p(P)$).

In each case $\proj_I (E)$ adjoins two facets of $Q$ covered by the same layer (either $\mathcal M_1$- or $\mathcal M_2$-). Further, if a facet of a
$\proj_I(Y_1)$-cap of $Q$ is covered by the $\mathcal M_1$-layer, its antipodal is covered by the $\mathcal M_2$-layer and vice versa. Thus the
caps are covered by different layers. This finishes the proof.

\end{proof}

\section{Sketch of the further argument}\label{sec:6}

As mentioned before, we prove Theorems~\ref{thm:voronoi_for_sum}, \ref{thm:cross} and \ref{thm:cross_reduction} simultaneously. For
Theorem~\ref{thm:voronoi_for_sum} we will use the equivalent statement of Theorem~\ref{thm:reducibility_for_twodim_free}.

We proceed by induction over $d$. At each step we will prove Theorems~\ref{thm:cross} and \ref{thm:cross_reduction} for parallelohedra of dimension 
$d-2$ and then Theorem~\ref{thm:reducibility_for_twodim_free} for $d$-dimensional parallelohedra. We emphasize that this ``dimension shift'' is important
for the argument. 

For $d\leq 4$ all the statements hold. Indeed, Theorems~\ref{thm:cross} and \ref{thm:cross_reduction} are obvious for 1- and 2-dimensional parallelohedra.
Theorem~\ref{thm:reducibility_for_twodim_free} holds for parallelohedra of dimension $d\leq 4$ because the equivalent statement of
Theorem~\ref{thm:voronoi_for_sum} is an immediate consequence of Voronoi's Conjecture. But Voronoi's Conjecture is known to be true for dimensions $\leq 4$
\cite{Del1929}. This makes the induction base.

Section~\ref{sec:7} provides a supplementary notion of {\it dilatation of Voronoi parallelohedra}. The key results here are Lemma~\ref{lem:7.2} and 
Corollary~\ref{cor:7.3} asserting that certain dilatations preserve a cross (see Definition~\ref{def:cross}). They are used in Section~\ref{sec:8}
to reduce Theorems~\ref{thm:cross} and \ref{thm:cross_reduction} to Theorem~\ref{thm:reducibility_for_twodim_free} for $(d-2)$-dimensional parallelohedra
which is true by induction hypothesis. 

Section~\ref{sec:9} reduces Theorem~\ref{thm:reducibility_for_twodim_free} for $d$-dimensional parallelohedra
to Theorems~\ref{thm:cross} and \ref{thm:cross_reduction} for $(d-2)$-dimensional parallelohedra obtained in Section~\ref{sec:8}. This completes the
induction step and the whole proof.

\section{Dilatation of Voronoi parallelohedra}\label{sec:7}

Assume that $\Lambda$ is a $d$-dimensional lattice and $\Omega$ is a positive definite quadratic form. By $P(\Lambda, \Omega)$ we will denote a 
parallelohedron, which is a Dirichlet--Voronoi cell for the lattice $\Lambda$ with respect to the Euclidean metric
$$\| \mathbf x \|_{\Omega}^2 = \mathbf x^T \Omega \mathbf x.$$

Let $\mathbf n$ be a vector. Consider a quadratic form
\begin{equation}\label{eq:7.1}
\Omega_{\mathbf n} = \Omega + \Omega^T \mathbf n \mathbf n^T \Omega.
\end{equation}
For every nonzero vector $\mathbf x$ one has
$$\mathbf x^T \Omega_{\mathbf n} \mathbf x = \mathbf x^T \Omega \mathbf x + (\mathbf n^T \Omega \mathbf x)^2 > 0,$$
thus $\Omega_{\mathbf n}$ is a positive definite quadratic form. If not otherwise stated, everywhere below we assume that $\mathbf n \neq \mathbf 0$.

\begin{defn}
All parallelohedra of form $P(\Lambda, \Omega_{\mathbf n})$ will be called {\it dilatations} of $P(\Lambda, \Omega)$.
\end{defn}

Let $\mathcal F(\Lambda, \Omega)$ be the set of all facet vectors of $P(\Lambda, \Omega)$. For what follows, we will need an another description of
facet vectors. Namely, the points $\mathbf x, \mathbf x' \in \Lambda$ are adjoint by a facet vector of $P(\Lambda, \Omega)$ iff the ball
$$B_{\Omega} (\mathbf x, \mathbf x') = 
\left \{ \mathbf y: \left \|\mathbf y - \frac{\mathbf x+\mathbf x'}{2} \right \|_{\Omega} \leq \frac 12 \|\mathbf x - \mathbf x' \|_{\Omega} \right \}$$
contains no points of $\Lambda$ other than $\mathbf x$ and $\mathbf x'$. This is because $[\mathbf x, \mathbf x']$ with $\mathbf x, \mathbf x' \in \Lambda$
is a Delaunay 1-cell iff $\mathbf x' - \mathbf x$ is a facet vector and, moreover, the empty sphere for 
the segment $[\mathbf x, \mathbf x']$ is centered at its midpoint (see~\cite[Lemma 13.2.7]{DLa1996}).

Define
$$\mathcal F_{\mathbf n}(\Lambda, \Omega) = \{\mathbf s: \mathbf s \in \mathcal F(\Lambda, \Omega)\; \text{and} \; \mathbf n^T\Omega \mathbf s \neq 0 \}.$$
The following lemma is expressed by a single formula, however, its meaning is explained in Corollary~\ref{cor:7.3}.

\begin{lem}\label{lem:7.2}
$\left\langle \mathcal F_{\mathbf n}(\Lambda, \Omega_{\mathbf n}) \right\rangle \subseteq 
\left\langle \mathcal F_{\mathbf n}(\Lambda, \Omega)\right\rangle$.
\end{lem}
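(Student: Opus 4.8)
The plan is to translate the statement into the language of empty balls, using the description of facet vectors recalled immediately above the lemma, and to exploit the fact that $\Omega$ and $\Omega_{\mathbf n}$ differ only ``along $\mathbf n$''. First I would put both sides of the inclusion into a common form. Since $\Omega=\Omega^T$, a direct computation gives $\mathbf n^T\Omega_{\mathbf n}=(1+\mathbf n^T\Omega\mathbf n)\,\mathbf n^T\Omega$, and $1+\mathbf n^T\Omega\mathbf n>0$ because $\Omega$ is positive definite. Hence for any facet vector $\mathbf s$ one has $\mathbf n^T\Omega_{\mathbf n}\mathbf s\neq 0$ iff $\mathbf n^T\Omega\mathbf s\neq 0$, so both $\mathcal F_{\mathbf n}(\Lambda,\Omega)$ and $\mathcal F_{\mathbf n}(\Lambda,\Omega_{\mathbf n})$ collect exactly those facet vectors of the respective parallelohedron that avoid the single hyperplane $H=\{\mathbf x:\mathbf n^T\Omega\mathbf x=0\}$. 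Thus the lemma asserts: \emph{every facet vector of $P(\Lambda,\Omega_{\mathbf n})$ lying off $H$ belongs to $\left\langle \mathcal F_{\mathbf n}(\Lambda,\Omega)\right\rangle$.}

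Next I would record the bilinear identity $\mathbf a^T\Omega_{\mathbf n}\mathbf b=\mathbf a^T\Omega\mathbf b+(\mathbf n^T\Omega\mathbf a)(\mathbf n^T\Omega\mathbf b)$ and draw two consequences. First, $\Omega_{\mathbf n}$ and $\Omega$ agree on $H$, i.e. $\mathbf a^T\Omega_{\mathbf n}\mathbf b=\mathbf a^T\Omega\mathbf b$ whenever $\mathbf a\in H$; in particular facets whose facet vector lies in $H$ are metrically unaffected by the dilatation. Second, via the ball description of facet vectors, after translating one endpoint to $\mathbf 0$, a vector $\mathbf s$ is a facet vector of $P(\Lambda,\Omega_{\mathbf n})$ iff $\mathbf z^T\Omega_{\mathbf n}(\mathbf z-\mathbf s)>0$ for every $\mathbf z\in\Lambda\setminus\{\mathbf 0,\mathbf s\}$; the extra term $(\mathbf n^T\Omega\mathbf z)\,\mathbf n^T\Omega(\mathbf z-\mathbf s)$ is negative precisely for $\mathbf z$ strictly inside the slab bounded by $\{\mathbf n^T\Omega\cdot=0\}$ and $\{\mathbf n^T\Omega\cdot=\mathbf n^T\Omega\mathbf s\}$. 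So for an off-$H$ vector $\mathbf s$ the emptiness condition for $\Omega_{\mathbf n}$ is \emph{stronger} than for $\Omega$ exactly inside that slab, and otherwise weaker — a lattice point can be pulled into the diametral $\Omega_{\mathbf n}$-ball of an off-$H$ edge only through the slab.

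I would then run a deformation along the ray $\Omega_t=\Omega+t\,\Omega\mathbf n\mathbf n^T\Omega$, $t\in[0,1]$, with $\Omega_0=\Omega$ and $\Omega_1=\Omega_{\mathbf n}$; note that $\mathbf n^T\Omega_t=(1+t\,\mathbf n^T\Omega\mathbf n)\,\mathbf n^T\Omega$, so $H$ and the transversality test are the same for every $t$. The set $\mathcal F(\Lambda,\Omega_t)$ is constant except at finitely many critical parameters, where the Delaunay tiling repartitions (a lattice point reaches the boundary of a previously empty diametral ball). Writing $W(t)=\left\langle\{\text{facet vectors of }P(\Lambda,\Omega_t)\text{ off }H\}\right\rangle$, the goal is to prove that $W(t)$ never grows beyond $W(0)$, so that $W(1)\subseteq W(0)=\left\langle\mathcal F_{\mathbf n}(\Lambda,\Omega)\right\rangle$. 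Between critical values $W(t)$ is locally constant, and at a crossing an off-$H$ edge $\mathbf b-\mathbf a$ is destroyed and replaced by shorter edges obtained by splitting it across the newly interior lattice point(s). The slab observation of the previous paragraph forces those interior points to have $\mathbf n^T\Omega$-height strictly between the heights of $\mathbf a$ and $\mathbf b$, so the new off-$H$ pieces interpolate the endpoints and are integer combinations of off-$H$ edges already present; meanwhile the ``agreement on $H$'' observation guarantees that edges lying in $H$ are untouched and cannot contribute to the off-$H$ span. Inductively $W(t)\subseteq W(0)$ throughout, which is the claim.

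The main obstacle is precisely the bookkeeping at a crossing: I must verify that \emph{every} off-$H$ facet vector appearing just after a repartition lies in the span of the off-$H$ facet vectors present just before it, i.e. that the repartition cannot create a transversal edge in a genuinely new direction. This is the metric counterpart of the combinatorial case analysis of Lemma~\ref{lem:cases}, and I expect it to require a careful enumeration of the possible local repartitions together with the slab sign analysis above; the delicate point is that a single flip may split an edge in several ways at once, so ``the pieces sum to the old edge'' must be upgraded to ``each off-$H$ piece individually lies in the old off-$H$ span.'' Once this local step is established, summing the inclusions over the finitely many crossings in $[0,1]$ completes the proof.
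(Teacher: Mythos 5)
Your overall strategy coincides with the paper's: deform along the pencil $\Omega_t=\Omega+t\,\Omega\mathbf n\mathbf n^T\Omega$, observe that the level functional $\mathbf n^T\Omega(\cdot)$, and hence the transversality test, is the same for all $t$, and show that the span of the transversal facet vectors never grows as $t$ increases. The sign analysis of the extra term $(\mathbf n^T\Omega(\mathbf z-\mathbf x))\,(\mathbf n^T\Omega(\mathbf z-\mathbf x'))$ is also exactly the computation the paper relies on. However, the local step at a critical parameter --- the only nontrivial point of the whole proof --- is left open, and your heuristic for it points in the wrong direction. The dangerous event is the \emph{appearance} of a new transversal edge $[\mathbf x,\mathbf x']$ at $t_0$ (ball empty for $t>t_0$, nonempty for $t<t_0$). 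By your own slab analysis, the lattice points that occupied its diametral ball for $t<t_0$ and leave it at $t_0$ are those for which the extra term is \emph{positive}, i.e.\ points lying \emph{outside} the slab, strictly above the level of $\mathbf x'$ or strictly below that of $\mathbf x$. So a newly appearing transversal edge is not ``split across interior points at intermediate heights'': any replacement path from $\mathbf x$ to $\mathbf x'$ through the blocking points must overshoot the slab. The interpolation picture you describe applies to the \emph{destruction} of an edge, which cannot increase the span and is harmless.

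The missing idea is how to write each newly appearing transversal edge as a combination of transversal edges already present at $t_0-\varepsilon$. The paper does this via the degenerate Delaunay cell $D=\conv\bigl(\partial B_{\Omega_{t_0}}(\mathbf x,\mathbf x')\cap\Lambda\bigr)$ at the critical moment: it is centrally symmetric about $\tfrac{\mathbf x+\mathbf x'}{2}$, all its edges are Delaunay edges for $t_0-\varepsilon$, it has vertices both strictly above $\mathbf x'$ and strictly below $\mathbf x$, and (by a central-symmetry argument) no vertex other than $\mathbf x$ on the level of $\mathbf x$. A linear-programming (monotone edge path) argument then connects $\mathbf x$ upward to the unique highest vertex of a suitable face of $D$ and back down to $\mathbf x'$, producing a path all of whose edges are transversal; this works in every dimension with no case analysis. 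Your proposed substitute --- an enumeration of the possible local repartitions in the spirit of Lemma~\ref{lem:cases} --- is not available here: that lemma classifies only three-dimensional dual cells, whereas the cell $D$ governing a repartition can have arbitrary dimension, so there is no finite list to inspect. As written, your argument is a plan whose decisive step is unproved.
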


\begin{proof}
Before starting the proof we emphasize an important property. For every vector $\mathbf x$ and every real $\lambda$ the conditions
$$\mathbf n^T \Omega \mathbf x = 0 \quad \text{and} \quad \mathbf n^T \Omega_{\lambda \mathbf n} \mathbf x = 0$$
are equivalent. This is an immediate consequence of the formula~(\ref{eq:7.1}).

Consider the Delaunay tiling with vertex set $\Lambda$ in the Euclidean metric given by a quadratic form $\Omega_{\lambda \mathbf n}$. We will observe
the change of the set $\mathcal F_{\mathbf n}(\Lambda, \Omega_{\lambda \mathbf n})$ as $\lambda$ grows from 0 to 1.

Suppose that at some $\lambda_0 \in (0,1)$ a new vector of $\mathcal F_{\mathbf n}(\Lambda, \Omega_{\lambda \mathbf n})$ emerges. It means that
there is a pair of points $\mathbf x, \mathbf x' \in \Lambda$ with the following properties.

\begin{enumerate}
	\item For $\lambda\searrow \lambda_0$ the ball $B_{\Omega_{\lambda \mathbf n}}(\mathbf x, \mathbf x')$ contains no points of $\Lambda$ other than 
	$\mathbf x$	and $\mathbf x'$.
	\item For $\lambda\nearrow \lambda_0$ the ball $B_{\Omega_{\lambda \mathbf n}}(\mathbf x, \mathbf x')$ contains some other points of $\Lambda$.
	\item $\mathbf n^T \Omega (\mathbf x' - \mathbf x) \neq 0$.
\end{enumerate}

If we prove that for a sufficiently small $\varepsilon>0$ the inclusion
$$\mathbf x' - \mathbf x \in \mathcal F_{\mathbf n}(\Lambda, \Omega_{(\lambda_0 - \varepsilon) \mathbf n})$$
holds, then we are done. Indeed, the inclusion means that $\mathcal F_{\mathbf n}(\Lambda, \Omega_{\lambda \mathbf n})$ never expands as 
$\lambda$ grows from 0 to 1.

Consider the ball $B_{\Omega_{\lambda_0 \mathbf n}}(\mathbf x, \mathbf x')$. By continuity, it contains some points of $\Lambda$ distinct from
$\mathbf x$	and $\mathbf x'$, but only on the boundary. Thus
$$D = \conv (B_{\Omega_{\lambda_0 \mathbf n}}(\mathbf x, \mathbf x') \cap \Lambda)$$
is a centrally symmetric Delaunay cell for the metric $\| \cdot \|_{\Omega_{\lambda_0 \mathbf n}}$ of dimension at least 2. 
It is not hard to see that all edges of $D$ are also Delaunay edges for every metric $\| \cdot \|_{\Omega_{(\lambda_0 - \varepsilon) \mathbf n}}$ 
if $\varepsilon$ is positive and small enough. 

We say that a point $\mathbf y \in \Lambda$ is above (below, on the same level with) a point $\mathbf y' \in \Lambda$ if
$\mathbf n \Omega (\mathbf y - \mathbf y')$ is positive (negative, zero respectively). As $\mathbf x$	and $\mathbf x'$ are not on the same level,
we will assume that $\mathbf x'$ is above $\mathbf x$.

We aim to prove that $\mathbf x$	and $\mathbf x'$ can be adjoint by a sequence of edges of $D$ in such a way that every edge of a sequence goes
between two vertices of different levels. This will imply that $\mathbf x' - \mathbf x$ is a combination of facet vectors of 
$P(\Lambda, \Omega_{(\lambda_0 - \varepsilon) \mathbf n})$, and Lemma~\ref{lem:7.2} will proved.

Observe that a vertex of $D$ is inside $B_{\Omega_{(\lambda_0 - \varepsilon) \mathbf n}}$ if it is above $\mathbf x'$ or below $\mathbf x$.
Since $D$ has a center of symmetry at $\tfrac{\mathbf x + \mathbf x'}{2}$, $D$ has vertices both above $\mathbf x'$ and below $\mathbf x$.

Further, $D$ has no point $\mathbf z \neq \mathbf x$ on the same level with $\mathbf x$. Indeed, assume the converse. Then the points
$\mathbf x$, $\mathbf x'$, $\mathbf z$ and $\mathbf z' = \mathbf x + \mathbf x' - \mathbf z$ lie on the sphere $B_{\Omega_{\lambda_0 \mathbf n}}$.
Thus
\begin{multline}\label{eq:7.2}
\left \| \mathbf x - \frac{\mathbf x + \mathbf x'}2 \right\|_{\Omega_{\lambda_0 \mathbf n}} = 
\left \| \mathbf x' - \frac{\mathbf x + \mathbf x'}2 \right\|_{\Omega_{\lambda_0 \mathbf n}} = 
\left \| \mathbf z - \frac{\mathbf x + \mathbf x'}2 \right\|_{\Omega_{\lambda_0 \mathbf n}} = \\
\left \| \mathbf z' - \frac{\mathbf x + \mathbf x'}2 \right\|_{\Omega_{\lambda_0 \mathbf n}}.
\end{multline}

Since $\mathbf z$ is on the same level with $\mathbf x$ and $\mathbf z'$ is on the same level with $\mathbf x'$, it is clear that
\begin{multline*}
\mathbf n^T \Omega \left(\mathbf x - \frac{\mathbf x + \mathbf x'}2 \right) = 
\mathbf n^T \Omega \left(\mathbf z - \frac{\mathbf x + \mathbf x'}2 \right) = 
- \mathbf n^T \Omega \left(\mathbf x' - \frac{\mathbf x + \mathbf x'}2 \right) = \\
- \mathbf n^T \Omega \left(\mathbf z' - \frac{\mathbf x + \mathbf x'}2 \right) 
\end{multline*}

Therefore (\ref{eq:7.2}) holds after substituting all instances of $\lambda_0 \mathbf n$ with $\lambda \mathbf n$ for every real $\lambda$. As a result,
$[\mathbf x, \mathbf x']$ is never a Delaunay edge, because the empty sphere centered at its midpoint necessarily contains at least two more points.

A well-known fact from linear programming~\cite[\S~3.2]{Zie1993} tells that $\mathbf x'$ can be connected with at least one of the highest vertices
(call this vertex $\mathbf y$) of $D$ by a sequence of edges going strictly upwards. We have proved that $\mathbf x'$ cannot be the highest point of $D$, 
so $\mathbf y \neq \mathbf x'$.

Consider the segment $[\mathbf x, \mathbf y]$. If it is an edge of $D$, the proof is over. Otherwise (see~\cite[Lemma 13.2.7]{DLa1996} again)
$[\mathbf x, \mathbf y]$ is a diagonal of a centrally symmetric face $D' \subset D$.

Suppose that $\mathbf y$ is not the only highest point of $D'$. Then there is a vertex $\mathbf y' \in D'$ on the same level with $\mathbf y$. But
due to the central symmetry, the point $\mathbf z = \mathbf x + \mathbf y - \mathbf y'$ is a vertex of $D'$ and is on the same level with $\mathbf x$.
This is impossible, so $\mathbf y$ is the only highest point of $D'$.

Thus one can connect $\mathbf x$ and $\mathbf y$ by a sequence of edges going strictly upwards. As a result, we have connected $\mathbf x$ and
$\mathbf x'$ by a sequence of edges going first strictly upwards and then strictly downwards. Thereby we have completed the remaining part of the proof.

\end{proof}

\begin{cor}\label{cor:7.3}
Assume that the parallelohedron $P(\Lambda, \Omega)$ has a cross of hyperplanes $\Pi, \Pi'$ (by Definition~\ref{def:cross}, it means that every facet vector of $P(\Lambda, \Omega)$ is parallel to $\Pi$ or to $\Pi'$). Let $\mathbf n$ be a normal vector to $\Pi$ in the metric $\|\cdot\|_{\Omega}$. Then 
$P(\Lambda, \Omega_{\mathbf n})$ has the same cross $(\Pi, \Pi')$.
\end{cor}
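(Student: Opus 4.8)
The plan is to reduce everything to Lemma~\ref{lem:7.2}, which already contains the substantive work; the corollary is then a short deduction. I would fix an arbitrary facet vector $\mathbf s$ of $P(\Lambda, \Omega_{\mathbf n})$, i.e. $\mathbf s \in \mathcal F(\Lambda, \Omega_{\mathbf n})$, and split into two cases according to whether $\mathbf s$ lies in $\Pi$. If $\mathbf s \in \Pi$ there is nothing to prove, so assume $\mathbf s \notin \Pi$; by the choice of $\mathbf n$ as an $\Omega$-normal to $\Pi$, this means $\mathbf n^T \Omega \mathbf s \neq 0$.

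The first thing to verify is that $\mathbf s$ actually belongs to the set $\mathcal F_{\mathbf n}(\Lambda, \Omega_{\mathbf n})$. By definition this requires $\mathbf n^T \Omega_{\mathbf n} \mathbf s \neq 0$, whereas so far we only know $\mathbf n^T \Omega \mathbf s \neq 0$. This is exactly the elementary equivalence recorded at the start of the proof of Lemma~\ref{lem:7.2}: using $\Omega^T = \Omega$ in the formula~(\ref{eq:7.1}) one computes $\mathbf n^T \Omega_{\mathbf n}\mathbf s = (1 + \mathbf n^T \Omega \mathbf n)\,\mathbf n^T \Omega \mathbf s$, and the factor $1 + \mathbf n^T\Omega\mathbf n$ is positive because $\Omega$ is positive definite and $\mathbf n\neq\mathbf 0$. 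Hence the two non-vanishing conditions coincide and $\mathbf s \in \mathcal F_{\mathbf n}(\Lambda, \Omega_{\mathbf n})$. Applying Lemma~\ref{lem:7.2} then gives
$$\mathbf s \in \left\langle \mathcal F_{\mathbf n}(\Lambda, \Omega_{\mathbf n}) \right\rangle \subseteq \left\langle \mathcal F_{\mathbf n}(\Lambda, \Omega) \right\rangle.$$

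The final step is to identify the right-hand span. Every element of $\mathcal F_{\mathbf n}(\Lambda, \Omega)$ is a facet vector of the original $P(\Lambda, \Omega)$ with $\mathbf n^T \Omega \cdot \neq 0$, that is, a facet vector \emph{not} lying in $\Pi$. By the cross hypothesis every facet vector of $P(\Lambda, \Omega)$ lies in $\Pi$ or in $\Pi'$; since these particular vectors avoid $\Pi$, they all lie in the linear hyperplane $\Pi'$. Thus $\mathcal F_{\mathbf n}(\Lambda, \Omega) \subseteq \Pi'$, and taking linear spans gives $\left\langle \mathcal F_{\mathbf n}(\Lambda, \Omega) \right\rangle \subseteq \Pi'$. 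Combining the inclusions yields $\mathbf s \in \Pi'$, so every facet vector of $P(\Lambda, \Omega_{\mathbf n})$ lies in $\Pi$ or in $\Pi'$, which is precisely the assertion that $(\Pi, \Pi')$ is a cross for $P(\Lambda, \Omega_{\mathbf n})$. Because Lemma~\ref{lem:7.2} is assumed, there is no serious obstacle here; the only point needing care is the bookkeeping that transfers the defining condition of $\mathcal F_{\mathbf n}$ between the metrics $\Omega$ and $\Omega_{\mathbf n}$, which is handled by the positivity of $1 + \mathbf n^T\Omega\mathbf n$, and the routine translation of ``not in $\Pi$'' into ``in $\Pi'$'' via the cross hypothesis.
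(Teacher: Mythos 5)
Your proof is correct and follows essentially the same route as the paper: both reduce the claim to Lemma~\ref{lem:7.2} together with the observation that $\mathbf n^T\Omega\mathbf x=0$ and $\mathbf n^T\Omega_{\mathbf n}\mathbf x=0$ are equivalent conditions, and both use the cross hypothesis to place $\left\langle \mathcal F_{\mathbf n}(\Lambda,\Omega)\right\rangle$ inside $\Pi'$. The only difference is presentational (you argue facet vector by facet vector with a case split, the paper states the same content as set inclusions), so there is nothing to add.
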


\begin{proof}
The property of $P(\Lambda, \Omega)$ to have a cross $\Pi, \Pi'$ means that 
$$\left\langle \mathcal F_{\mathbf n}(\Lambda, \Omega) \right\rangle \subseteq \Pi'.$$

By Lemma~\ref{lem:7.2}, 
$$\left\langle \mathcal F_{\mathbf n}(\Lambda, \Omega_{\mathbf n})\right\rangle \subseteq 
\left\langle \mathcal F_{\mathbf n}(\Lambda, \Omega) \right\rangle \subseteq \Pi' .$$

But the set $\mathcal F(\Lambda, \Omega_{\mathbf n})\setminus \mathcal F_{\mathbf n}(\Lambda, \Omega_{\mathbf n})$ lies in the orthogonal complement to 
$\mathbf n$ (which is the same in both $\| \cdot \|_{\Omega}$ and $\| \cdot \|_{\Omega_{\mathbf n}}$), i.e. in $\Pi$. Thus
$$\mathcal F(\Lambda, \Omega_{\mathbf n}) \subset \Pi \cup \Pi',$$
which means that $P(\Lambda, \Omega_{\mathbf n})$ has the cross $(\Pi, \Pi')$.

\end{proof}

\section{Induction step for Theorems \ref{thm:cross} and \ref{thm:cross_reduction}}\label{sec:8}

The goal of this section is to prove Theorems~\ref{thm:cross}~and~\ref{thm:cross_reduction} for parallelohedra of dimension $n = d-2$ provided that
they are proved for smaller dimensions and Theorem~\ref{thm:reducibility_for_twodim_free} is proved for dimension $n$. (In fact, the induction
hypothesis asserts that Theorem~\ref{thm:reducibility_for_twodim_free} is true for all dimensions up to $n+1$.) The proof is given as a series of lemmas.

\begin{lem}\label{lem:8.1}
Theorem~\ref{thm:cross_reduction} is true for $n$-dimensional parallelohedra if Theorem~\ref{thm:cross} is true for parallelohedra of all
dimensions up to $n-1$.
\end{lem}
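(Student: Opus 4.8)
The plan is to handle each irreducible summand separately and, for each one, to invoke Theorem~\ref{thm:cross} in a strictly smaller dimension. Write $L_i = \lin\aff P_i$, so that $L_1 \oplus L_2 \oplus \cdots \oplus L_k = \mathbb E^n$ and, since $k > 1$ and every summand is non-degenerate, $\dim L_j \leq n - (k-1) \leq n-1$ for each $j$.

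First I would record how facets behave under direct sums. Every facet of $P = P_1 \oplus \cdots \oplus P_k$ has the form $F_j \oplus \bigoplus_{i\neq j} P_i$ for a unique $j$ and a facet $F_j$ of $P_j$. Because $\mathbf s(F_j) \in L_j$, translation by $\mathbf s(F_j)$ leaves the remaining summands fixed, so $P + \mathbf s(F_j) = (P_j + \mathbf s(F_j)) \oplus \bigoplus_{i\neq j} P_i$, and intersecting with $P$ yields $\bigl(P_j \cap (P_j + \mathbf s(F_j))\bigr) \oplus \bigoplus_{i\neq j} P_i = F_j \oplus \bigoplus_{i\neq j}P_i$. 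Hence the standard vector of this facet is exactly $\mathbf s(F_j) \in L_j$, and the set of all facet vectors of $P$ is the union over $j$ of the facet vectors of the summands $P_j$, each sitting inside its own $L_j$. By condition~2 every such vector lies in $\Pi_1$ or $\Pi_2$; and by \cite[Lemma~3 and Proposition~4]{Gav2013}, applied iteratively, each $P_j$ is itself a Voronoi parallelohedron.

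Then I would fix $j$ and argue by contradiction: suppose $L_j \not\subseteq \Pi_1$ and $L_j \not\subseteq \Pi_2$, i.e. $\aff P_j$ is parallel to neither hyperplane. Then $\Pi_1 \cap L_j$ and $\Pi_2 \cap L_j$ each have codimension $1$ in $L_j$, so they form a cross for $P_j$ regarded as a parallelohedron in the ambient space $L_j$: every facet vector of $P_j$ lies in $L_j$ and in $\Pi_1$ or $\Pi_2$, hence in $\Pi_1 \cap L_j$ or $\Pi_2\cap L_j$. Since $P_j$ is Voronoi and $\dim P_j \leq n-1$, Theorem~\ref{thm:cross} (available for all dimensions up to $n-1$ by assumption) shows that $P_j$ is reducible, contradicting condition~1. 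Therefore $L_j \subseteq \Pi_1$ or $L_j \subseteq \Pi_2$, that is $\aff P_j \parallel \Pi_1$ or $\aff P_j \parallel \Pi_2$, for every $j$, which is the assertion of Theorem~\ref{thm:cross_reduction}.

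The only point demanding care is the very hypothesis of Theorem~\ref{thm:cross}, namely that $\Pi_1 \cap L_j$ and $\Pi_2 \cap L_j$ genuinely be hyperplanes of $L_j$; this is exactly what the standing assumption $L_j \not\subseteq \Pi_i$ supplies. I would also remark that the two restricted hyperplanes are automatically distinct, since the facet vectors of $P_j$ generate the full-dimensional lattice $\Lambda(P_j)$ and so cannot all lie in a single hyperplane of $L_j$, though Theorem~\ref{thm:cross} as stated does not require distinctness.
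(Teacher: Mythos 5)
Your proof is correct and follows essentially the same route as the paper: assume a summand $P_j$ is parallel to neither hyperplane, restrict the cross to $\lin\aff P_j$, and apply Theorem~\ref{thm:cross} in dimension at most $n-1$ to contradict irreducibility. The extra details you supply (the facet structure of a direct sum, the Voronoi property of the summands, and the check that the restricted planes are genuine hyperplanes) are all points the paper leaves implicit, and they are handled correctly.
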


\begin{proof}
Let $\dim P = n$ and $P = P_1 \oplus P_2 \oplus \ldots \oplus P_k$, where $k > 1$ and all $P_i$ are irreducible and let 
$(\Pi_1, \Pi_2)$ be a cross for $P$. We have to prove that $\aff P_i \parallel \Pi_1$ or $\aff P_i \parallel \Pi_2$ for each $i=1,2,\ldots,k$.

Assume the converse, say, $\aff P_1 \nparallel \Pi_1$ and $\aff P_i \nparallel \Pi_2$. Then 
$$ (\lin \aff P_1 \cap \Pi_1,\; \lin \aff P_1 \cap \Pi_2) $$
is a pair of hyperplanes in $\lin \aff P_1$ being a cross for $P_1$. But $\dim P_1 < n$, therefore by Theorem~\ref{thm:cross} the parallelohedron
$P_1$ is reducible, a contradiction.

\end{proof}

\begin{lem}\label{lem:8.2}
Assume that a Voronoi $n$-parallelohedron $P(\Lambda, \Omega)$ has a cross $(\Pi_1, \Pi_2)$ and the lattices
$$\Lambda \cap \Pi_1 \quad \text{and} \quad \Lambda \cap \Pi_2$$
are $(n-1)$-dimensional.  Then there are vectors $\mathbf n_1$ and $\mathbf n_2$ such that
\begin{enumerate}
	\item[\rm 1.] $\mathbf n_1$ is orthogonal to $\Pi_1$ in $\|\cdot \|_{\Omega}$.
	\item[\rm 2.] $\mathbf n_2$ is orthogonal to $\Pi_2$ in $\|\cdot \|_{\Omega_{\mathbf n_1}}$.
	\item[\rm 3.] The twofold dilatation $P(\Lambda, (\Omega_{\mathbf n_1})_{\mathbf n_2})$ has a free space 
	$\left\langle \mathbf n_1, \mathbf n_2 \right\rangle$.
\end{enumerate}
\end{lem}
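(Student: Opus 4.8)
The plan is to choose $\mathbf n_1,\mathbf n_2$ in essentially the only way conditions 1 and 2 permit — their directions are forced by the two orthogonalities, while their lengths are left as parameters to be fixed large at the end — and then to verify condition 3 through the Voronoi free-segment criterion. First I would take $\mathbf n_1$ to be any vector $\Omega$-orthogonal to $\Pi_1$ (this is condition 1); by Corollary~\ref{cor:7.3}, $P(\Lambda,\Omega_{\mathbf n_1})$ still has the cross $(\Pi_1,\Pi_2)$. Writing $G=\Omega_{\mathbf n_1}$, I take $\mathbf n_2$ to be $G$-orthogonal to $\Pi_2$ (condition 2); a second application of Corollary~\ref{cor:7.3}, with the roles of the two hyperplanes exchanged, preserves the cross for $\Omega'=(\Omega_{\mathbf n_1})_{\mathbf n_2}$. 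The elementary identity from the proof of Lemma~\ref{lem:7.2} — that $\mathbf n_2^T\Omega'\mathbf x=0\iff\mathbf n_2^T G\mathbf x=0$ — gives $\mathbf n_2\perp_{\Omega'}\Pi_2$, and for $\mathbf v\in\Pi_1\cap\Pi_2$ both correction terms vanish, so that $\mathbf n_1,\mathbf n_2\perp_{\Omega'}(\Pi_1\cap\Pi_2)$. Hence, setting $W=\Pi_1\cap\Pi_2$ (which is $(n-2)$-dimensional by the hypothesis on $\Lambda\cap\Pi_i$), I would conclude $p^{\perp_{\Omega'}}=W$; that is, the facets of $P(\Lambda,\Omega')$ parallel to $p=\langle\mathbf n_1,\mathbf n_2\rangle$ are exactly those with facet vector in $W$.

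By Theorem~\ref{thm:1.2} and the Voronoi criterion, $p$ is free iff for every segment $I\parallel p$ each six-belt has a facet orthogonal to $I$; since a $2$-plane cannot be covered by finitely many proper subspaces, this is equivalent to: every six-belt of $P(\Lambda,\Omega')$ has a facet vector $\perp_{\Omega'}p$, i.e. a facet vector in $W$. The three facet vectors $\mathbf e_1,\mathbf e_2,\mathbf e_3$ of a six-belt satisfy $\mathbf e_1+\mathbf e_2+\mathbf e_3=\mathbf 0$ and each lies in $\Pi_1\cup\Pi_2$; by the pigeonhole principle two of them lie in a common $\Pi_i$, whence so does the third. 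Thus every six-belt is either \emph{all-}$\Pi_1$ or \emph{all-}$\Pi_2$, and a six-belt failing the required condition (call it \emph{bad}) has all three facet vectors in $\Pi_1\setminus\Pi_2$ or all three in $\Pi_2\setminus\Pi_1$. Using $\mathbf n_2\perp_{\Omega'}\Pi_2$ and the analogous computation for $\mathbf n_1$ on $\Pi_2$-vectors, I would note that an all-$\Pi_1$ bad belt is \emph{transversal} to $\mathbf n_2$ (each edge $\mathbf e_i$ has $\mathbf e_i^T\Omega'\mathbf n_2\neq0$), while an all-$\Pi_2$ bad belt is transversal to $\mathbf n_1$.

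The core of the argument is then geometric and exploits the lengths of $\mathbf n_1,\mathbf n_2$. The dual of a six-belt is a Delaunay triangle in the plane $(\lin\aff F)^{\perp_{\Omega'}}$ whose edges are the three facet vectors. If all three edges are transversal to a direction $\mathbf m$, the three vertices occupy three distinct $\mathbf m$-levels, and the empty-ball description of facet vectors from Section~\ref{sec:7} shows, exactly as in the $\lambda$-monotonicity part of the proof of Lemma~\ref{lem:7.2}, that once the dilation stretches $\mathbf m$ strongly enough the middle vertex falls strictly inside the diameter ball of the extreme edge; that edge is then no longer a facet vector and the triangle ceases to be Delaunay. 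Taking $\|\mathbf n_1\|$ large and then $\|\mathbf n_2\|$ large therefore destroys every bad belt, the uniformity of the threshold over the finitely many surviving transversal facet vectors being guaranteed by the non-expansion statement of Lemma~\ref{lem:7.2} applied to $\mathcal F_{\mathbf n_1}$ and $\mathcal F_{\mathbf n_2}$. In the limit this is the familiar picture in which $P(\Lambda,\Omega')$ becomes a prism $R_W\oplus I_1\oplus I_2$ over the $(n-2)$-dimensional Voronoi cell $R_W$ of $\Lambda\cap W$, whose only six-belts are inherited from $R_W$ and hence have all edges in $W$.

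The step I expect to be the main obstacle is the control of \emph{new} six-belts created by the dilations, i.e. ruling out resurrection. Lemma~\ref{lem:7.2} forbids new facet vectors transversal to the direction being stretched, so stretching $\mathbf n_2$ cannot create new all-$\Pi_1$ bad belts; however it may create new facet vectors in $\Pi_2$, horizontal to $\mathbf n_2$, which a priori could form all-$\Pi_2$ bad belts invisible to the first dilation. The delicate point to be made rigorous is that these newly appearing $\mathbf n_2$-horizontal belts descend from the $\Pi_1$-section $R$ produced by the first dilation, so that their edges in fact lie in $\Pi_1\cap\Pi_2=W$ and they are not bad; establishing this — keeping the two stretches compatible so that neither undoes the other — is where the hypothesis that $\Lambda\cap\Pi_1$ and $\Lambda\cap\Pi_2$ are $(n-1)$-dimensional, together with a careful order of limits, has to be used.
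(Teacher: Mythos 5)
Your setup is sound and coincides with the paper's: the directions of $\mathbf n_1$ and $\mathbf n_2$ are forced by conditions 1 and 2, Corollary~\ref{cor:7.3} applied twice preserves the cross, and freeness of $p=\left\langle\mathbf n_1,\mathbf n_2\right\rangle$ reduces, via Theorem~\ref{thm:1.2} and the pigeonhole observation that the three facet vectors of a six-belt all lie in a common $\Pi_i$, to showing that every Delaunay triangle of the final metric has an edge in $W=\Pi_1\cap\Pi_2$. The gap is exactly where you place it yourself, and it is not a technicality but the entire content of the lemma. Your destruction mechanism is a deformation argument: stretch along $\mathbf n_1$, then along $\mathbf n_2$, and argue that each currently bad triangle eventually loses an edge from the facet-vector set. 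But the claim to be proved concerns the Delaunay tiling of the \emph{final} form $(\Omega_{\mathbf n_1})_{\mathbf n_2}$, and killing every triangle that is bad at some intermediate moment does not preclude a bad triangle assembling itself in the final tiling out of facet vectors that survive or newly appear: Lemma~\ref{lem:7.2} controls only the transversal set $\mathcal F_{\mathbf n}$ and says nothing about new facet vectors orthogonal to the direction being stretched, which are precisely the ones that could form new all-$\Pi_2$ bad belts after the second stretch. Since you do not supply the argument ruling this out (and the uniformity of your ``large enough'' threshold is likewise left open), the proposal is incomplete at its central step.

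The paper avoids the deformation picture altogether and argues statically in the final metric with explicitly chosen lengths. Let $\rho$ be the largest empty-sphere radius of the $(n-2)$-lattice $\Lambda\cap W$ with respect to the restricted form (this is where the hypothesis that $\Lambda\cap\Pi_i$ is $(n-1)$-dimensional enters: it guarantees $\Lambda\cap W$ is a full $(n-2)$-lattice, and the restriction of the metric to translates of $W$ is unaffected by both dilatations). The vectors $\mathbf n_1,\mathbf n_2$ are scaled so that consecutive hyperplanes of the lattice bundles $(\mathbf n_i')^T\Omega\mathbf x=m\alpha$ are more than $\rho$ apart in the dilated metric. If a Delaunay triangle of $(\Omega_{\mathbf n_1})_{\mathbf n_2}$ had no edge in $W$, its plane would be parallel to, say, $\Pi_2$ and its vertices would lie on three distinct hyperplanes of the $\Pi_1$-bundle; the auxiliary point $\mathbf x_4=\tfrac{m-1}{m+1}\mathbf x_1+\tfrac{2}{m+1}\mathbf x_3$ on the long edge, chosen so that the midpoint of $[\mathbf x_1,\mathbf x_4]$ lies on a bundle hyperplane, then yields an empty sphere of radius greater than $\rho$ centered in a lattice translate of $\aff(\Lambda\cap W)$ --- a contradiction. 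To complete your proof you would need either to import this quantitative, one-shot argument or to find another rigorous substitute for the no-resurrection claim; as written, the proposal does not constitute a proof.
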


\begin{proof}
The lattice $\Lambda \cap \Pi_1 \cap \Pi_2$
is $(n-2)$-dimensional. Indeed, $\Pi_1$ and $\Pi_2$ have bases consisting of integer vectors, so they can be restricted to hyperplanes in $\mathbb Q^n$.
Therefore $\Pi_1 \cap \Pi_2$ restricted to $\mathbb Q^n$ is a $(d-2)$-dimensional linear space. Hence it has a rational basis and, under a proper scaling,
an integer basis.

For every possible choice of $(\Omega_{\mathbf n_1})_{\mathbf n_2}$ its restriction to $\Pi_1 \cap \Pi_2$ coincides with the
restriction of $\Omega$ to the same space. Denote this rectriction by $\Omega'$. Let $\rho$ be the radius of the largest empty sphere for the lattice 
$\Lambda \cap \Pi_1 \cap \Pi_2$ with respect to the metric $\|\cdot \|_{\Omega'}$.

Let $\mathbf n'_1$ be an arbitrary normal to $\Pi_1$ in $\|\cdot \|_{\Omega}$. Then the whole lattice $\Lambda$ can be covered by a bundle $\mathcal X_1$
of hyperplanes
$$ (\mathbf n'_1)^T \Omega \mathbf x = m\alpha, \quad m\in \mathbb Z.$$
Set $\mathbf n_1 = \left| \tfrac{\rho}{\alpha} \right| \mathbf n'_1$. Then the distance between the hyperplanes
$$ (\mathbf n'_1)^T \Omega \mathbf x = m\alpha \quad \text{and} \quad (\mathbf n'_1)^T \Omega \mathbf x = (m+1)\alpha $$
in $\|\cdot \|_{\Omega_{\mathbf n_1}}$ equals $|\alpha|\sqrt{1+\tfrac{\rho^2}{\alpha^2}} > \rho$.

Similarly, let $\mathbf n'_2$ be an arbitrary normal to $\Pi_2$ in $\|\cdot \|_{\Omega_{\mathbf n_1}}$. Then the whole lattice $\Lambda$ can be 
covered by a bundle $\mathcal X_2$ of hyperplanes
$$ (\mathbf n'_2)^T \Omega_{\mathbf n_1} \mathbf x = m\beta, \quad m\in \mathbb Z.$$
Set $\mathbf n_2 = \left| \tfrac{\rho}{\beta} \right| \mathbf n'_2$. Then the distance between the hyperplanes
$$ (\mathbf n'_2)^T \Omega_{\mathbf n_1} \mathbf x = m\beta \quad \text{and} \quad (\mathbf n'_2)^T \Omega_{\mathbf n_1} \mathbf x = (m+1)\beta $$
in $\|\cdot \|_{(\Omega_{\mathbf n_1})_{\mathbf n_2}}$ equals $|\beta|\sqrt{1+\tfrac{\rho^2}{\beta^2}} > \rho$.

Changing the metric from $\|\cdot \|_{\Omega_{\mathbf n_1}}$ to $\|\cdot \|_{(\Omega_{\mathbf n_1})_{\mathbf n_2}}$ does not decrease the distances,
so the $(\Omega_{\mathbf n_1})_{\mathbf n_2}$-distance between two consecutive planes of $\mathcal X_1$ is still greater than $\rho$.

Consider the Delaunay tiling $\mathcal D$ for lattice $\Lambda$ and metric $\| \cdot \|_{(\Omega_{\mathbf n_1})_{\mathbf n_2}}$. We prove that every
triangle $\Delta\in \mathcal D$ has an edge parallel to $\Pi_1 \cap \Pi_2$.

By Corollary~\ref{cor:7.3}, every edge of $\mathcal D$ is parallel to $\Pi_1$ or $\Pi_2$. By Pigeonhole principle, $\Delta$ has two edges parallel
to the same hyperplane, say, $\Pi_2$. Then $\aff \Delta \parallel \Pi_2$.

Assume that no edge of $\Delta$ is parallel to $\Pi_1 \cap \Pi_2$. Then no edge of $\Delta$ is parallel to $\Pi_1$. Then the vertices
of $\Delta$ belong to pairwise different planes of $\mathcal X_1$. Denote the vertices of
$\Delta$ by $\mathbf x_1$, $\mathbf x_2$ and $\mathbf x_3$. Without loss of generality assume that the plane of $\mathcal X_1$ passing through $\mathbf x_2$
lies between the planes of $\mathcal X_1$ passing through $\mathbf x_1$ and $\mathbf x_3$.

Consider a subbundle $\mathcal X'_1 \subset \mathcal X_1$ consisting of those hyperplanes of $\mathcal X_1$ that have at least one integer point in common with
$\aff \Delta$. Of course, the hyperplanes of $\mathcal X'_1$ are equally spaced, and the intersection of each with $\Pi_2$ contains a $(d-2)$-lattice.
Finally, the hyperplanes of $\mathcal X_1$ passing through $\mathbf x_1$, $\mathbf x_2$ and $\mathbf x_3$ are in $\mathcal X'_1$.

Let the interval $(\mathbf x_1, \mathbf x_3)$ be intersected by exactly $m$ hyperplanes of $\mathcal X'_1$ (obviously, $m\geq 1$). Set
$$\mathbf x_4 = \frac{m-1}{m+1} \mathbf x_1 + \frac{2}{m+1} \mathbf x_3.$$
Obviously, $\mathbf x_4 \in [\mathbf x_1, \mathbf x_3]$.

Since $[\mathbf x_1, \mathbf x_3]$ is an edge of $\mathcal D$, $\partial B_{(\Omega_{\mathbf n_1})_{\mathbf n_2}}(\mathbf x_1,\mathbf x_3)$ is
an empty sphere. Perform a homothety with center $\mathbf x_1$ and coefficient $\tfrac{2}{m+1}$. The ball 
$B_{(\Omega_{\mathbf n_1})_{\mathbf n_2}}(\mathbf x_1,\mathbf x_3)$ goes to the ball $B_{(\Omega_{\mathbf n_1})_{\mathbf n_2}}(\mathbf x_1,\mathbf x_4)$.
Since $\tfrac{2}{m+1} \leq 1$,
$$B_{(\Omega_{\mathbf n_1})_{\mathbf n_2}}(\mathbf x_1,\mathbf x_4) \subset B_{(\Omega_{\mathbf n_1})_{\mathbf n_2}}(\mathbf x_1,\mathbf x_3)$$
and therefore $\partial B_{(\Omega_{\mathbf n_1})_{\mathbf n_2}}(\mathbf x_1,\mathbf x_4)$ is an empty sphere.

By choice of $\mathbf x_4$, the point $\tfrac{\mathbf x_1 + \mathbf x_4}{2}$ lies in a plane of $\mathcal X'_1$. Thus the $(n-2)$-plane
$$\frac{\mathbf x_1 + \mathbf x_4}{2} + (\Pi_1 \cap \Pi_2)$$
contains an $(n-2)$-lattice with all empty spheres not greater than $\rho$ in radius. But the sphere
$$\partial B_{(\Omega_{\mathbf n_1})_{\mathbf n_2}}(\mathbf x_1,\mathbf x_4) \cap \left( \frac{\mathbf x_1 + \mathbf x_4}{2} + (\Pi_1 \cap \Pi_2) \right)$$
is empty and has radius 
$$\frac 12 \| \mathbf x_4 - \mathbf x_1 \|_{(\Omega_{\mathbf n_1})_{\mathbf n_2}} > \rho,$$
because $\mathbf x_1$ and $\mathbf x_4$ belong to two non-consecutive planes of $\mathcal X_1$. A contradiction, thus every
triangle of $\mathcal D$ has an edge parallel to $\Pi_1 \cap \Pi_2$.

Hence, by Theorem~\ref{thm:1.2}, the orthogonal complement to $\Pi_1 \cap \Pi_2$ in $\| \cdot \|_{(\Omega_{\mathbf n_1})_{\mathbf n_2}}$
is a free space for $P(\Lambda, (\Omega_{\mathbf n_1})_{\mathbf n_2})$. It is not hard to check that $\mathbf n_1$ and $\mathbf n_2$
are independent and both orthogonal to $\Pi_1 \cap \Pi_2$.

\end{proof}

\begin{lem}\label{lem:8.3}
Assume that Theorems~\ref{thm:reducibility_for_twodim_free}~and~\ref{thm:cross_reduction} are true for dimension $n$. Then all $n$-dimensional 
Voronoi parallelohedra with crosses are reducible.
\end{lem}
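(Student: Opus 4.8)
The plan is to manufacture a two-dimensional free space on a suitable dilatation of $P$, split that dilatation by Theorem~\ref{thm:reducibility_for_twodim_free}, and then carry the splitting back to $P$ itself. Writing $P = P(\Lambda,\Omega)$, I would first normalize the cross. Put $V_i = \left\langle \mathcal F(\Lambda,\Omega)\cap\Pi_i \right\rangle$ for $i=1,2$; the facet vectors lying in $\Pi_i$ already lie in the rational subspace $V_i$, and since all facet vectors jointly span $\mathbb E^n$ we have $V_1 + V_2 = \mathbb E^n$ with neither $V_i$ equal to $\mathbb E^n$. Replacing each $\Pi_i$ by a rational hyperplane containing $V_i$ (keeping the two hyperplanes distinct, which is possible because the facet vectors do not all lie in one hyperplane) leaves $(\Pi_1,\Pi_2)$ a cross for the \emph{same} $P$, while forcing both $\Lambda\cap\Pi_1$ and $\Lambda\cap\Pi_2$ to be $(n-1)$-dimensional. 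This places us in the hypotheses of Lemma~\ref{lem:8.2}.

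Next I would apply Lemma~\ref{lem:8.2} to obtain vectors $\mathbf n_1,\mathbf n_2$ and the twofold dilatation $P' = P(\Lambda,(\Omega_{\mathbf n_1})_{\mathbf n_2})$ carrying the two-dimensional free space $\left\langle \mathbf n_1,\mathbf n_2 \right\rangle$; by Corollary~\ref{cor:7.3} applied twice, $P'$ retains the cross $(\Pi_1,\Pi_2)$. Since Theorem~\ref{thm:reducibility_for_twodim_free} is available in dimension $n$, $P'$ is reducible. Decomposing $P' = P'_1\oplus\cdots\oplus P'_k$ into irreducibles ($k>1$) and invoking Theorem~\ref{thm:cross_reduction} in dimension $n$, every $\aff P'_i$ is parallel to $\Pi_1$ or to $\Pi_2$. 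Collecting the factors parallel to $\Pi_1$ into a subspace $q_A$ and the remaining ones (parallel to $\Pi_2$ but not to $\Pi_1$) into $q_B$ yields a direct sum splitting $\mathbb E^n = q_A\oplus q_B$ with $q_A\subseteq\Pi_1$, $q_B\subseteq\Pi_2$, and both $q_A,q_B$ nontrivial (a hyperplane cannot exhaust $\mathbb E^n$, so neither summand is all of $\mathbb E^n$ nor zero), along which $P'$ splits.

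The crucial step is to transfer reducibility from $P'$ back to $P$. Here I would use that a Voronoi parallelohedron $P(\Lambda,\Theta)$ splits compatibly with $\mathbb E^n = q_A\oplus q_B$ exactly when $\Lambda = (\Lambda\cap q_A)\oplus(\Lambda\cap q_B)$ and $q_A$ is orthogonal to $q_B$ in $\|\cdot\|_\Theta$ (the Voronoi cell of an orthogonal direct sum of lattices being the product of the cells; cf.~\cite{Gav2013}). The lattice splitting is metric-independent, so it passes from $P'$ to $P$ unchanged. For the orthogonality I would compute $\left\langle u,v \right\rangle_{(\Omega_{\mathbf n_1})_{\mathbf n_2}} = u^T (\Omega_{\mathbf n_1})_{\mathbf n_2}\, v$ for $u\in q_A$, $v\in q_B$: expanding the two dilatations via~(\ref{eq:7.1}), the correction term built from $\mathbf n_2$ vanishes because $\mathbf n_2$ is orthogonal to $\Pi_2\supseteq q_B$ in $\|\cdot\|_{\Omega_{\mathbf n_1}}$, and the correction term built from $\mathbf n_1$ vanishes because $\mathbf n_1$ is orthogonal to $\Pi_1\supseteq q_A$ in $\|\cdot\|_{\Omega}$. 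Hence $\left\langle u,v \right\rangle_{(\Omega_{\mathbf n_1})_{\mathbf n_2}} = \left\langle u,v \right\rangle_{\Omega}$ on $q_A\times q_B$, so orthogonality of $q_A,q_B$ in $\|\cdot\|_\Omega$ follows from their orthogonality in $\|\cdot\|_{(\Omega_{\mathbf n_1})_{\mathbf n_2}}$. Therefore $P = P(\Lambda,\Omega)$ splits along $q_A\oplus q_B$ as well, which is the desired reducibility.

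The main obstacle is precisely this transfer: the dilatation alters the metric, so reducibility is not preserved a priori, and everything hinges on the vanishing of the two dilatation corrections on $q_A\times q_B$. That vanishing is exactly what makes the careful choice $\mathbf n_1\perp\Pi_1$, $\mathbf n_2\perp\Pi_2$ in Lemma~\ref{lem:8.2} pay off, and it is why Theorem~\ref{thm:cross_reduction} is indispensable --- without it one could not guarantee that the irreducible factors of $P'$ align with $\Pi_1$ and $\Pi_2$, and the correction terms would not cancel. A secondary, purely technical point is the initial normalization of the cross to rational, lattice-full hyperplanes.
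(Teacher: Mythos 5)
Your proposal is correct and follows essentially the same route as the paper: normalize the cross to rational hyperplanes meeting $\Lambda$ in $(n-1)$-dimensional sublattices, apply Lemma~\ref{lem:8.2} and Corollary~\ref{cor:7.3} to get a twofold dilatation with a two-dimensional free space and the same cross, split it via Theorems~\ref{thm:reducibility_for_twodim_free} and~\ref{thm:cross_reduction}, and transfer the splitting back to $\Omega$. The only (harmless) difference is in the last step: the paper undoes the two dilatations one at a time by decomposing the quadratic form as $\Omega_1+\Omega_2$ and correcting the kernels, whereas you verify directly that the bilinear form is unchanged on $q_A\times q_B$ because both rank-one corrections vanish there --- a slightly more streamlined execution of the same idea.
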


\begin{proof}
Let $P(\Lambda, \Omega)$ have a cross. Then $\mathcal F(\Lambda, \Omega)$ can be partitioned into two subsets $\mathcal F_1, \mathcal F_2$ of dimension 
less than $n$ each. If necessary, append $\mathcal F_1$ and $\mathcal F_2$ by several vectors of $\Lambda$ to obtain generating sets of two hyperplanes 
$\Pi_1$ and $\Pi_2$ respectively. By construction, $(\Pi_1, \Pi_2)$ is a cross for $P(\Lambda, \Omega)$ satisfying the conditions of Lemma~\ref{lem:8.2}.

Consider the parallelohedron $P(\Lambda, (\Omega_{\mathbf n_1})_{\mathbf n_2})$ introduced in Lemma~\ref{lem:8.2}. It has a two-dimensional free
space $\left\langle \mathbf n_1, \mathbf n_2 \right\rangle$. In addition, by Corollary~\ref{cor:7.3}, $(\Pi_1, \Pi_2)$ is a cross for  
$P(\Lambda, (\Omega_{\mathbf n_1})_{\mathbf n_2})$ as well.

By Theorem~\ref{thm:reducibility_for_twodim_free} for dimension $n$,
$$P(\Lambda, (\Omega_{\mathbf n_1})_{\mathbf n_2}) = P_1 \oplus P_2 \oplus \ldots \oplus P_k.$$
In turn, Theorem~\ref{thm:cross_reduction} says that $\aff P_j \parallel \Pi_1$ or $\aff P_j \parallel \Pi_2$.

Let $R_1$ be the sum of all summands that are parallel to $\Pi_1$ and $R_2$ be the sum of the remaining summands. Then
$$P(\Lambda, (\Omega_{\mathbf n_1})_{\mathbf n_2}) = R_1 \oplus R_2,$$
where $\aff R_j \parallel \Pi_j$ ($j = 1,2$). Obviously, $\aff R_1$ and $\aff R_2$ are orthogonal with respect to $(\Omega_{\mathbf n_1})_{\mathbf n_2}$.

Thus $(\Omega_{\mathbf n_1})_{\mathbf n_2} = \Omega_1 + \Omega_2$, where $\Omega_1$ and $\Omega_2$ are positive semidefinite quadratic forms with
kernels $\lin \aff R_2$ and $\lin \aff R_1$ respectively.

The kernel of $(\Omega_{\mathbf n_1})_{\mathbf n_2} - \Omega_{\mathbf n_1}$ contains $\lin \aff R_2$. Thus the kernel of
$$\Omega'_1 = \Omega_1 - (\Omega_{\mathbf n_1})_{\mathbf n_2} + \Omega_{\mathbf n_1}$$
contains $\lin \aff R_2$. But the form $\Omega'_1$ is positive definite on $\lin \aff R_1$, otherwise the form $\Omega_{\mathbf n_1} = \Omega_2 + \Omega'_1$
is not positive definite.

$\Omega'_1$ and $\Omega_2$ have complementary kernels $\lin \aff R_2$ and $\lin \aff R_1$ respectively, therefore 
$P(\Lambda, \Omega_{\mathbf n_1})= R'_1 \oplus R_2$, where $\aff R'_1 \parallel \Pi_1$. Thus, in addition, $(\Pi_1, \Pi_2)$ is a cross for $P(\Lambda, \Omega_{\mathbf n_1})$.

Repeating the same argument for $P(\Lambda, \Omega_{\mathbf n_1})$ we obtain that $P(\Lambda, \Omega)$ is reducible and has the cross $(\Pi_1, \Pi_2)$.

\end{proof}

\section{Voronoi parallelohedra with free planes are reducible}\label{sec:9}

In this section we complete the proof of our main results by explaining the induction step in Theorem~\ref{thm:reducibility_for_twodim_free}.
This requires Theorems~\ref{thm:cross}~and~\ref{thm:cross_reduction} for dimension $d-2$. These statements were enabled for being used in Section~\ref{sec:8}.

We will use the results of Section~\ref{sec:5} extensively. In order to do this, we prove the following.

\begin{lem}\label{lem:9.1}
If a parallelohedron has a free two-dimensional plane, then it has a free perfect two-dimensional plane.
\end{lem}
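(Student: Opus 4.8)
The plan is to start from the given free $2$-plane $p$, first locate inside every six-belt a facet parallel to $p$, then form the intersection of all facet-hyperplanes parallel to $p$ (a perfect free space containing $p$ by Corollary~\ref{cor:free_space}), and finally trim this space down to dimension exactly $2$ while keeping it perfect.

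First I would fix an arbitrary six-belt $B$ and let $H_1, H_2, H_3$ be the three hyperplane directions $\lin\aff F$ occurring among its facets (the six facets form three parallel pairs). Since $p$ is free, every segment $I\parallel p$ is free, so by Theorem~\ref{thm:1.2} the belt $B$ contains a facet parallel to $I$; that is, the direction of $I$ lies in $H_1\cup H_2\cup H_3$. Hence $p\subseteq H_1\cup H_2\cup H_3$. A two-dimensional real vector space is never a union of finitely many proper subspaces, so $p\subseteq H_i$ for some $i$, i.e. $B$ contains a facet $F$ with $p\subseteq\lin\aff F$. Therefore the family $\mathcal F_p$ of all facets parallel to $p$ meets every six-belt of $P$.

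Next I would apply Corollary~\ref{cor:free_space} to $\mathcal F_p$: the space $q=\bigcap_{F\in\mathcal F_p}\lin\aff F$ is a perfect free space, and $p\subseteq q$, so $\dim q\geq 2$. If $\dim q=2$ there is nothing left to do. Otherwise I would cut $q$ down one dimension at a time. The point is that $\mathcal F_p$ already meets every six-belt, so adjoining any further facets $G_1,\dots,G_r$ preserves this property; hence for every choice of the $G_j$ the intersection $q\cap\bigcap_j\lin\aff G_j$ is again a perfect free space by Corollary~\ref{cor:free_space}. It thus suffices to choose the $G_j$ so that the dimension falls to exactly $2$.

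To control the dimension I would use that the outer facet normals of a bounded polytope span $\mathbb E^d$, whence $\bigcap_{F}\lin\aff F=\{\mathbf 0\}$ over all facets $F$. Consequently, as long as the current subspace $q'$ satisfies $q'\neq\{\mathbf 0\}$ (in particular whenever $\dim q'\geq 3$), there is a facet $G$ with $q'\not\subseteq\lin\aff F$ for $F=G$, so intersecting with $\lin\aff G$ lowers $\dim q'$ by exactly one. Iterating from $q$ until the dimension reaches $2$ then yields a two-dimensional perfect free space, as required. The only real obstacle is this dimension bookkeeping: I must prevent the dimension from overshooting $2$, which is precisely what the ``drop by exactly one'' step guarantees, while perfectness is preserved automatically because every intermediate intersection still contains the belt-covering family $\mathcal F_p$.
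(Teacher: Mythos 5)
Your proof is correct and follows essentially the same route as the paper: show every six-belt contains a facet parallel to $p$, intersect the spaces $\lin\aff F$ of these facets to get a perfect free space of dimension at least $2$, and then adjoin further facets to cut the dimension down to exactly $2$. You merely spell out two steps the paper leaves implicit (that $p$ cannot be a union of three proper subspaces, and that the dimension can be lowered one unit at a time because the intersection over all facets is $\{\mathbf 0\}$), which is fine.
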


\begin{proof}
Assume that $P$ be a parallelohedron and $p$ is a free plane for $P$. Let $\pm F_1, \pm F_2, \ldots, \pm F_k$ be all the facets of $P$ parallel to $p$.
Obviously, each six-belt of $P$ contains at least one pair $\pm F_j$, otherwise $p$ is not free. Further,
$$\dim \left(\lin\aff F_1 \cap \lin\aff F_2 \cap \ldots \cap \lin\aff F_k \right) \geq 2,$$
as the intersection contains $p$.

If necessary, add facets $\pm F_{k+1}, \pm F_{k+2}, \ldots \pm F_m$ so that 
$$\dim p' = 2, \quad \text{where} \quad p'= \left(\lin\aff F_1 \cap \lin\aff F_2 \cap \ldots \cap \lin\aff F_m \right).$$
Then the conditions of Theorem~\ref{thm:1.2} hold for every segment $I \parallel p'$. Thus $p'$ is a free plane for $P$.

\end{proof}

The remaining part of the proof is presented as a series of lemmas.

\begin{lem}\label{lem:9.2}
Let $R = P(\Lambda, \Omega)$ be a Voronoi parallelohedron. Assume in addition that $R$ is centered at the origin and $\mathbf 0 \in \Lambda$.

Let $\mathbf v$ be a vector. Call a facet $F\subset R$ good, if the point $\mathbf v + \tfrac 12 \mathbf s(F)$, which is the center of the
facet $F+\mathbf v \subset R+\mathbf v$, is disjoint from all facets of $T(R)$ parallel to $F$. Otherwise call $F$ bad.

Finally, let 
$$\mathbf v' \in (\Lambda + \mathbf v) \cap R.$$ 
Then the vector $\mathbf v'$ is parallel to all bad facets of $R$. 
\end{lem}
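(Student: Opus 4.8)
The plan is to fix one bad facet $F$ at a time, write $\mathbf s=\mathbf s(F)$, and reduce the claim to the single equation $\mathbf s^{T}\Omega\,\mathbf v'=0$, which is precisely $\mathbf v'\parallel F$ since $\lin\aff F$ is the $\Omega$-orthogonal complement of $\mathbf s$. I would use throughout that $R=P(\Lambda,\Omega)$ is the Dirichlet--Voronoi cell of $\mathbf 0$, so $\mathbf y\in R$ if and only if $2\,\mathbf y^{T}\Omega\mathbf u\le \mathbf u^{T}\Omega\mathbf u$ for every $\mathbf u\in\Lambda$, that $F=R\cap(R+\mathbf s)$ lies in the bisector $\{\mathbf x:\mathbf s^{T}\Omega\mathbf x=\tfrac12\mathbf s^{T}\Omega\mathbf s\}$, and that every facet of $T(R)$ parallel to $F$ is a lattice translate $F+\mathbf t$. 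Badness of $F$ then means $\mathbf v+\tfrac12\mathbf s\in F+\mathbf t$ for some $\mathbf t\in\Lambda$. Setting $\mathbf v_{0}=\mathbf v-\mathbf t$, I would record three facts: $\mathbf v_{0}+\tfrac12\mathbf s\in F\subset R$; since $F\subset R+\mathbf s$ also $\mathbf v_{0}-\tfrac12\mathbf s\in R$; and $\mathbf s^{T}\Omega\mathbf v_{0}=0$, because $\mathbf v_{0}+\tfrac12\mathbf s$ lies on the bisector. By convexity $\mathbf v_{0}$ itself lies in $R$.

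Because $\Lambda+\mathbf v=\Lambda+\mathbf v_{0}$, I write the given point as $\mathbf v'=\mathbf v_{0}+\mathbf w$ with $\mathbf w\in\Lambda$, and since $\mathbf s^{T}\Omega\mathbf v_{0}=0$ the goal becomes $\mathbf s^{T}\Omega\mathbf w=0$. The whole argument is then three applications of the Voronoi inequality above, each against one well-chosen test vector. First, $\mathbf v_{0}\in R$ tested with $\mathbf u=-\mathbf w$ gives $2\,\mathbf v_{0}^{T}\Omega\mathbf w+\mathbf w^{T}\Omega\mathbf w\ge 0$, while $\mathbf v'=\mathbf v_{0}+\mathbf w\in R$ tested with $\mathbf u=\mathbf w$ gives $2\,\mathbf v_{0}^{T}\Omega\mathbf w+\mathbf w^{T}\Omega\mathbf w\le 0$; hence $2\,\mathbf v_{0}^{T}\Omega\mathbf w+\mathbf w^{T}\Omega\mathbf w=0$.

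Finally I would feed the two facet points into the same inequality. Testing $\mathbf v_{0}+\tfrac12\mathbf s\in R$ with $\mathbf u=-\mathbf w$ and simplifying by the identity just obtained yields $\mathbf s^{T}\Omega\mathbf w\ge 0$, whereas testing $\mathbf v_{0}-\tfrac12\mathbf s\in R$ with $\mathbf u=-\mathbf w$ yields $\mathbf s^{T}\Omega\mathbf w\le 0$. Therefore $\mathbf s^{T}\Omega\mathbf w=0$, so $\mathbf s^{T}\Omega\mathbf v'=0$ and $\mathbf v'\parallel F$. Since $F$ was an arbitrary bad facet, $\mathbf v'$ is parallel to all of them.

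The conceptual obstacle, and the reason a naive approach fails, is that one cannot merely track the linear height $\mathbf s^{T}\Omega\mathbf x$ across the lattice: the set of heights $\mathbf s^{T}\Omega(\Lambda)$ generally meets the open interval $(0,\mathbf s^{T}\Omega\mathbf s)$ (already for the planar hexagonal lattice), so a point of $(\Lambda+\mathbf v)\cap R$ is not forced onto the bisector by its height alone. The idea that unlocks the lemma is to use the two \emph{antipodal} facet points $\mathbf v_{0}\pm\tfrac12\mathbf s$, both lying in $R$, and to test the Voronoi inequality at each against the \emph{same} translation $-\mathbf w$: the terms $\tfrac12\mathbf s$ and $-\tfrac12\mathbf s$ enter with opposite signs and squeeze $\mathbf s^{T}\Omega\mathbf w$ to zero from both sides. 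I expect the only delicate points to be the bookkeeping of the reduction, namely that the facets parallel to $F$ are exactly the lattice translates $F+\mathbf t$ and that $\mathbf v_{0}\pm\tfrac12\mathbf s\in R$, rather than the final computation.
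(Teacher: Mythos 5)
Your proof is correct, and it takes a genuinely different route from the paper's. Both arguments begin the same way: badness gives a lattice vector $\mathbf t$ with $\mathbf v-\mathbf t+\tfrac12\mathbf s(F)\in F$, so $\mathbf v_0=\mathbf v-\mathbf t$ lies in $R$ and is parallel to $F$. From there the paper argues geometrically: it invokes the Danzer--Gr\"unbaum observation to place $\mathbf v_0$ in the mid-section $\tfrac12F+\tfrac12(-F)$, and then handles the possible non-uniqueness of the representative of $\Lambda+\mathbf v$ in $R$ via the minimal face $E\ni\mathbf v_0$ and the Voronoi-specific fact that $E\subset R$ and $E+\mathbf t'\subset R$ force $\mathbf t'\,\bot\,E$, concluding $\mathbf s(F)\in\lin\aff E$ and hence $\mathbf t'\,\bot\,\mathbf s(F)$. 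You instead dispose of the same non-uniqueness purely analytically: writing $\mathbf v'=\mathbf v_0+\mathbf w$, you first pin down $2\mathbf v_0^T\Omega\mathbf w+\mathbf w^T\Omega\mathbf w=0$ from $\mathbf v_0,\mathbf v'\in R$, and then squeeze $\mathbf s^T\Omega\mathbf w$ to zero by testing the two antipodal facet points $\mathbf v_0\pm\tfrac12\mathbf s\in R$ against the same translation $-\mathbf w$. I checked all four inequalities and they close correctly. What your approach buys is self-containedness: it needs nothing beyond the defining inequalities $2\mathbf y^T\Omega\mathbf u\le\mathbf u^T\Omega\mathbf u$ of the Voronoi cell, whereas the paper leans on two imported facts about faces of Voronoi parallelohedra; what it costs is that it is tied to the quadratic-form description and would not transfer to a non-Voronoi setting, while the paper's first half is metric-free. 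The one step you flag but do not prove --- that every facet of $T(R)$ parallel to $F$ is a lattice translate $F+\mathbf t$ --- is indeed routine here: in the Voronoi metric a facet $G$ parallel to $F$ has $\mathbf s(G)$ proportional to $\mathbf s(F)$, facet vectors are primitive (the paper itself records in Section~7 that $[\mathbf 0,\mathbf s]$ must be a Delaunay $1$-cell), so $G=\pm F$ and $-F=F-\mathbf s(F)$. You should spell this out in a final write-up, but it is not a gap.
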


\begin{proof}
Let $F \subset R$ be a bad facet. Then, by definition of a bad facet, the point $\mathbf v + \tfrac 12 \mathbf s(F)$ belongs to some facet
$F + \mathbf t$, where $\mathbf t \in \Lambda$. It means that the polytopes $F + \mathbf t$ and $F + \mathbf v$ have a common point
$\mathbf v + \tfrac 12 \mathbf s(F)$.

Therefore the polytopes $F$ and $F + \mathbf v - \mathbf t$ share a common point 
$$\mathbf v - \mathbf t + \frac 12 \mathbf s(F).$$

Hence (see~\cite{DGr1962} for details), 
\begin{equation}\label{eq:9.1}
	\mathbf v - \mathbf t \in \frac 12 F + \frac 12 (-F).
\end{equation}

The inclusion (\ref{eq:9.1}) has two immediate consequences. First of all, $\mathbf v - \mathbf t \parallel F$. Secondly, since $-F$ is also a face of $R$,
$\mathbf v - \mathbf t \in R$. Thus we have found a particular vector from $(\Lambda + \mathbf v) \cap R$ which is parallel to $F$. Now we
have to prove the same parallelity for all other vectors of $(\Lambda + \mathbf v) \cap R$.

$R$ is a fundamental domain for the translation group $\Lambda$. Consequently, if $\mathbf v - \mathbf t \in \inter R$, then $(\Lambda + \mathbf v) \cap R$
consists of the only vector $\mathbf v - \mathbf t$, which is parallel to $F$, as proved above.

Now suppose that $\mathbf v - \mathbf t \in \partial R$.
Let $E$ be the minimal face of $R$ containing the point $\mathbf v - \mathbf t$. All the elements of $(\Lambda + \mathbf v) \cap R$ are representable as
$\mathbf v - \mathbf t + \mathbf t'$, where $\mathbf t' \in \Lambda$ and $E+ \mathbf t' \subset R$. For a Voronoi parallelohedron $R$ it is well-known
that $E \subset R$ and $E+\mathbf t' \subset R$ together give $\mathbf t' \, \bot \, E$ with orthogonality related to $\| \cdot \|_{\Omega}$.

On the other hand, $\tfrac 12 F + \tfrac 12 (-F)$ is a mid-section of the prism $\conv (F\cup (-F))$. Therefore (\ref{eq:9.1}) guarantees that 
if $\mathbf v - \mathbf t \in E$, then necessarily 
$$\left [\mathbf v - \mathbf t - \frac 12 \mathbf s(F), \mathbf v - \mathbf t + \frac 12 \mathbf s(F) \right] \subset E.$$

Hence $\mathbf s(F) \in \lin \aff E$ and, consequently, $\mathbf t' \, \bot \, \mathbf s(F)$. As a result, $\mathbf t' \parallel F$, and finally,
$\mathbf v - \mathbf t + \mathbf t' \parallel F$.

\end{proof}

\begin{lem}\label{lem:9.3}
Let a Voronoi parallelohedron $P$ have a free perfect two-dimensional plane $p$. Then $P$ is a prism, or the parallelohedron $R = \proj_p (P)$ has a cross.
\end{lem}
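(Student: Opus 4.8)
The plan is to transport the cross that $P$ already carries down through $\proj_p$ onto the $(d-2)$-dimensional parallelohedron $R=\proj_p(P)$, where the induction hypothesis (Theorems~\ref{thm:cross} and~\ref{thm:cross_reduction} in dimension $d-2$) becomes available. First I record the cross on $P$ itself. By Lemma~\ref{lem:5.2} the plane $p$ contains exactly two perfect lines $\ell_1,\ell_2$, every facet of $P$ is parallel to $\ell_1$ or to $\ell_2$, and therefore every facet vector lies in $\langle\mathcal B_{Y_1}(P)\rangle=\ell_1^{\perp}$ or in $\langle\mathcal B_{Y_2}(P)\rangle=\ell_2^{\perp}$; so $(\ell_1^{\perp},\ell_2^{\perp})$ is a cross for $P$ in $\mathbb E^d$. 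In dimension $d$ this cross is not yet usable, and the substance of the lemma is to realize a companion cross on $R$. Since $R$ is Voronoi by Lemma~\ref{lem:5.4}, it lives in $W=\langle\mathcal B_p(P)\rangle=p^{\perp}$ of dimension $d-2$ and may be centered at the origin, so Lemma~\ref{lem:9.2} applies to it.

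The central task is to classify the facet vectors of $R$, all of which lie in $W$. Each such vector is realized by an adjacency of tiles of $T(R)$; lifting that adjacency to $T(P)$ through the two-layer picture of Lemmas~\ref{lem:5.5} and~\ref{lem:5.6}, I would show that every facet vector of $R$ is the $\proj_p$-image either of a facet vector parallel to $p$ (an element of $\mathcal B_p(P)\subset W$, which is parallel to both $\ell_1$ and $\ell_2$ and hence should fall into $\Pi_1\cap\Pi_2$) or of a cap facet vector, that is, of an element of $\mathcal C^1_I(P)\cup\mathcal C^2_I(P)$ for a generic $I\parallel p$, the superscript recording parallelism with $Y_1$ or with $Y_2$. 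By Lemma~\ref{lem:5.5} each family $\mathcal C^j_I(P)$ already sits in an affine flat parallel to a subspace of $W$. To confine the type-$j$ vectors to a single hyperplane I would invoke Lemma~\ref{lem:9.2} applied to $R$, taking $\mathbf v$ to be the translation $\mathbf v_j$ of $R$ produced by the layer-splitting complex of Lemma~\ref{lem:5.6} for type $j$. Its conclusion, that the representative $\mathbf v'_j\in(\Lambda(R)+\mathbf v_j)\cap R$ is parallel to every bad facet of $R$, says precisely that all facet vectors detected by this translation lie in the hyperplane $\Pi_j=(\mathbf v'_j)^{\perp}\cap W$ (orthogonality in $\Omega_R$). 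Carrying this out for $j=1,2$ and checking that the bad facets for $\mathbf v_j$ exhaust the type-$j$ facets would place every facet vector of $R$ in $\Pi_1\cup\Pi_2$, giving the cross.

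The dichotomy with the prism case enters through the representative $\mathbf v'_j$. If $\mathbf v'_j\neq\mathbf 0$ for both $j$, then $\Pi_1,\Pi_2$ are genuine hyperplanes of $W$ and $R$ has a cross; the degenerate possibility is $\mathbf v'_j=\mathbf 0$, equivalently $\mathbf v_j\in\Lambda(R)$, in which case Lemma~\ref{lem:9.2} imposes no constraint. I expect this degeneracy to occur exactly when the layer structure in the direction $\ell_j$ is trivial, so that $\ell_j$ splits off as a free segment and $P=P_0\oplus(\text{segment}\parallel\ell_j)$ is a prism. The hard part will be this classification step: verifying that the bad facets of $R$ for the translation $\mathbf v_j$ are precisely the type-$j$ facets, so that the two hyperplanes together cover all facet vectors, and pinning the degenerate case down as the prism alternative rather than some uncontrolled configuration. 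This is where the two-layer machinery of Lemma~\ref{lem:5.6} and the good/bad facet analysis of Lemma~\ref{lem:9.2} must be combined most carefully.
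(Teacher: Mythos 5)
You have the paper's skeleton right: build the two shifted tilings $T_j=\{\proj_p(P+\mathbf t):\mathbf t\in\Lambda_j\}$ from the cap-layers, take translations $\mathbf v_1,\mathbf v_2$ with $R-\mathbf v_j\in T_j$, apply Lemma~\ref{lem:9.2}, and read the cross off the representatives $\mathbf v'_1,\mathbf v'_2$, with $\mathbf v'_j=\mathbf 0$ giving the prism alternative. But the two steps you explicitly defer are exactly where the proof lives, and your sketches of them point in directions that would not close the gaps. For the covering step, the paper does not (and need not) show that the bad facets for $\mathbf v_j$ are ``precisely the type-$j$ facets''; it proves the different statement that no facet $E'$ of $R$ can evade both translations at once. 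The mechanism is specific: such an $E'$ would satisfy $E'\cap\inter R_1\cap\inter R_2\neq\varnothing$ for some $R_1\in T_1$, $R_2\in T_2$; Lemma~\ref{lem:5.6} lifts this to a $(d-2)$-dimensional triple intersection $P\cap P_1\cap P_2$ upstairs with a $(d-3)$-subface $E$ satisfying $\proj_p(E)=E'\cap R_1\cap R_2$, forcing $p$ to be transversal to $E$; and the classification of transversal arrangements of a perfect free plane with a $(d-3)$-fan (Lemma~\ref{lem:cases}, item~2, Figure~\ref{planes}) excludes every such configuration. Your outline never touches the $(d-3)$-fan classification, which is the actual content here. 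Moreover, your preparatory claim that every facet vector of $R$ lifts to a vector of $\mathcal B_p(P)$ or of $\mathcal C^1_I(P)\cup\mathcal C^2_I(P)$, with the $\mathcal B_p(P)$ part landing in $\Pi_1\cap\Pi_2$, is both unproved and unnecessary: the paper applies Lemma~\ref{lem:9.2} to all facets of $R$ uniformly, and there is no reason a facet of $P$ parallel to $p$ should project to something orthogonal to both $\mathbf v'_1$ and $\mathbf v'_2$.

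The prism alternative also requires an argument rather than an expectation. In the paper, $\mathbf v_j\in\Lambda(R)$ makes $R$ itself a tile of $T_j$, hence the unique tile of $T_j$ meeting $R$ in dimension $d-2$; Lemma~\ref{lem:5.6} then forces $|\mathcal C^j_I(P)|=1$, so all facets of $P$ except a single antipodal pair are parallel to the other perfect line, and $P$ is a prism. This is not the mechanism you conjecture ($\ell_j$ splitting off as a free direct summand), and nothing in your text substitutes for it. In short: correct strategy, but the two load-bearing verifications are missing, and the hints you give for them would not supply them.
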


\begin{proof}
Recall that $p$ contains two perfect free lines $\ell_1$ and $\ell_2$ and let the segment $I$ be parallel to $p$, but non-parallel to both $\ell_j$.
Again, let the segments $Y_j$ to be parallel to $\ell_j$. In Section~\ref{sec:5} we have defined the sets $\mathcal C^j_I(P)$ for $j = 1,2$. 
As in Lemma~\ref{lem:5.6}, let 
$$\mathbf w_j \in \mathcal C^j_I(P), \quad \text{and}$$
$$\Lambda_j = \Lambda(P) \cap \left( \mathbf w_j +  \left\langle \mathcal A_p(P) \cup \mathcal B_p(P) \right\rangle \right).$$

By Lemma~\ref{lem:5.3}, $P+Y_1+Y_2$ is a parallelohedron. Since it has a nonzero width in the direction $p$, the sets 
$$T_j = \{ \proj_p (P + \mathbf t) : \mathbf t \in \Lambda_j \} \quad (j=1,2)$$
both are tilings of $\mathbb R^{d-2}$ by translates of a parallelohedron $R = \proj_p(P)$. Choose $\mathbf v_1$ and $\mathbf v_2$ so that
$$R - \mathbf v_j \in T_j.$$

Let $\mathbf v_1 \in \Lambda (R)$. Then $R$ is a tile of $\mathbf T_1$, and it is the only tile of $T_1$ to have a $(d-2)$-dimensional intersection with $R$.

From Lemma~\ref{lem:5.6} it immediately follows that
$$|\mathcal C^j_I(P)| = 1.$$
Thus all but two facets of $P$ are parallel to $\ell_2$. As an immediate consequence we get that $P$ is a prism. Similarly, $P$ is a prism if 
$\mathbf v_2 \in \Lambda (R)$.

By Lemma~\ref{lem:5.4}, $R = P(\Lambda(R), \Omega)$ for some positive quadratic form $\Omega$ of $(d-2)$ variables. Now in terms of Lemma~\ref{lem:9.2}, 
assume that every facet of $R$ is good with respect at least to one vector $\mathbf v_1$ or $\mathbf v_2$. Choose
$$\mathbf v'_j \in R \cap (\mathbf v_j + \Lambda(R)).$$
The cases $\mathbf v'_j = \mathbf 0$ have been considered before, so assume that $\mathbf v'_j \neq \mathbf 0$ for $j=1,2$.

Then, according to Lemma~\ref{lem:9.2}, each facet of $R$ is parallel to $\mathbf v'_1$ or $\mathbf v'_2$. Equivalently, each facet vector of $R$
is orthogonal to $\mathbf v'_1$ or $\mathbf v'_2$ in the metric $\| \cdot \|_{\Omega}$. Thus orthogonal complements to $\mathbf v'_1$ and $\mathbf v'_2$
form a cross for $R$.

We will prove that nothing else is possible. Namely, no facet of $R$ can be bad with respect both to $\mathbf v_1$ and $\mathbf v_2$.

Assume that $E'$ is a facet of $R$ that is bad with respect to $\mathbf v_1$ and $\mathbf v_2$. Then, obviously there exist
$R_1 \in T_1$ and $R_2 \in T_2$ satisfying
\begin{equation}\label{eq:9.2}
E' \cap \inter R_1 \cap \inter R_2 \neq \varnothing.
\end{equation}
Indeed, in the sense of $(d-3)$-dimensional Lebesgue measure, almost every point of $E'$ close enough to its center is covered by exactly one
tile of $T_1$ and exactly one tile of $T_2$.

Let $R_j = \proj_p (P_j)$, where $P_j =  P + \mathbf t_j$, $\mathbf t_j \in \Lambda_j$ and $j = 1,2$. Then, by Lemma~\ref{lem:5.6}, 
the face $P \cap P_1 \cap P_2$ is $(d-2)$-dimensional and has a $(d-3)$-subface $E$ such that 
$$\proj_p (E) = E' \cap R_1 \cap  R_2.$$

Since 
$$\dim \aff E = \dim \aff \proj_p(E) = d-3,$$
the plane $p$ is transversal to $E$. This corresponds to one of the cases of Lemma~\ref{lem:cases}, item 2.
But none of these cases matches with (\ref{eq:9.2}), a contradiction. Hence $R$ cannot have facets which are bad with respect both to $\mathbf v_1$ 
and $\mathbf v_2$.

\end{proof}

\begin{lem}\label{lem:9.4}
Let a Voronoi $d$-parallelohedron $P$ have a free two-dimensional plane $p$. Assume that Theorems~\ref{thm:cross}~and~\ref{thm:cross_reduction} 
hold for dimension $n = d-2$. Then $P$ is reducible.
\end{lem}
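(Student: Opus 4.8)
The plan is to transfer reducibility from the $(d-2)$-dimensional projection $R=\proj_p(P)$, where the inductive hypothesis is available, up to $P$ itself by means of Ordine's criterion (Theorem~\ref{thm:ordine_reducibility_misc}).

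First I would normalize: by Lemma~\ref{lem:9.1} I may assume $p$ is a \emph{perfect} free two-dimensional plane, so that the machinery of Section~\ref{sec:5} applies. In particular $p$ contains two perfect free lines $\ell_1,\ell_2$, and by Lemma~\ref{lem:5.2} every facet of $P$ is parallel to $\ell_1$ or to $\ell_2$. I then invoke Lemma~\ref{lem:9.3}. If it concludes that $P$ is a prism, then $P$ is reducible and there is nothing more to do. In the remaining case $R=\proj_p(P)$ carries a cross $(\Pi_1,\Pi_2)$, where $\Pi_1,\Pi_2$ are hyperplanes of the $(d-2)$-dimensional space $p^{\perp}=\left\langle \mathcal B_p(P)\right\rangle$.

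Next I process $R$. By Lemma~\ref{lem:5.4} it is a Voronoi parallelohedron of dimension $d-2$, so Theorems~\ref{thm:cross} and~\ref{thm:cross_reduction} hold for it by the inductive hypothesis. Theorem~\ref{thm:cross} makes $R$ reducible, and Theorem~\ref{thm:cross_reduction} forces each irreducible summand to be parallel to $\Pi_1$ or to $\Pi_2$. Collecting the summands accordingly, I write $R=S_1\oplus S_2$ with $U_j:=\lin\aff S_j\subseteq\Pi_j$ and $U_1\oplus U_2=p^{\perp}$. Since the facet vectors of $R$ are exactly the vectors of $\mathcal B_p(P)$, every element of $\mathcal B_p(P)$ now lies in $U_1$ or in $U_2$.

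The decisive step lifts this splitting to $P$. Writing $m_j=\ell_j^{\perp}\cap p$ for the line of $p$ orthogonal to $\ell_j$, I set
$$q_1=U_1\oplus m_1,\qquad q_2=U_2\oplus m_2.$$
Because $U_1\oplus U_2=p^{\perp}$ and $m_1\oplus m_2=p$ (the two lines are distinct), one gets $q_1\oplus q_2=\mathbb E^d$. I claim that every facet vector of $P$ lands in $q_1\cup q_2$: a facet parallel to $p$ contributes a vector of $\mathcal B_p(P)\subseteq U_1\cup U_2\subseteq q_1\cup q_2$; a facet $F$ parallel to $\ell_j$ but not to $p$ has $\mathbf s(F)\perp\ell_j$, so the $p$-component of $\mathbf s(F)$ lies on $m_j$, and it remains to verify that its $p^{\perp}$-component $\proj_p(\mathbf s(F))$ lies in $U_j$. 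Granting this, $\mathbf s(F)\in U_j\oplus m_j=q_j$, and Theorem~\ref{thm:ordine_reducibility_misc} delivers $P=P_1\oplus P_2$.

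The hard part is exactly this last compatibility claim: that the partition of the facets of $P$ into ``parallel to $\ell_1$'' and ``parallel to $\ell_2$'' agrees, after projection along $p$, with the partition of $p^{\perp}$ into $U_1$ and $U_2$ obtained from the cross of $R$. It cannot come from the cross of $R$ by itself, since the facets in question do not project onto facets of $R$ but onto faces of positive codimension. Instead I would recover it from the way $P$ is layered over $R$: the layers $T_1,T_2$, the vectors $\mathbf v'_1,\mathbf v'_2$ and the good/bad facet analysis used to build the cross in Lemma~\ref{lem:9.3} are all indexed by $\ell_1,\ell_2$ through the sets $\mathcal C^1_I(P),\mathcal C^2_I(P)$, and combining this indexing with the intersection identity of Lemma~\ref{lem:5.6} should show that a facet of $P$ parallel to $\ell_j$ projects into the summand $S_j$. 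Once that correspondence is pinned down the claim follows, and with it the induction step for Theorem~\ref{thm:reducibility_for_twodim_free}.
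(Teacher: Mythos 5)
Your plan reproduces the paper's architecture: pass to a perfect plane via Lemma~\ref{lem:9.1}, dispose of the prism case, extract a cross on $R=\proj_p(P)$ from Lemma~\ref{lem:9.3}, split $R=S_1\oplus S_2$ using Theorems~\ref{thm:cross} and~\ref{thm:cross_reduction} in dimension $d-2$, and lift the splitting to $P$ by Theorem~\ref{thm:ordine_reducibility_misc}. Your complementary subspaces $q_j=U_j\oplus m_j$ are (up to relabelling) the paper's $\left\langle \mathbf w_j\right\rangle\oplus\lin\aff S_{3-j}$, since $\mathbf w_j$ has $p$-component on $m_j$ and $p^{\perp}$-component in $\lin\aff S_{3-j}$. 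So the route is the right one. The problem is that you have explicitly left unproven the one step that carries the weight of the lemma: that for a facet $F\parallel\ell_j$ with $F\nparallel p$ the vector $\proj_p(\mathbf s(F))$ lands in the correct summand of $R$. Saying that the good/bad analysis ``should show'' this is not a proof, and without it Theorem~\ref{thm:ordine_reducibility_misc} cannot be applied.

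The paper closes this gap concretely. For $\mathbf s(F)\in\mathcal C^j_I(P)$ one has $P\cap(P+\mathbf s(F))=F$, a $(d-1)$-face parallel to $\ell_j\subset p$, so $\proj_p\bigl(P\cap(P+\mathbf s(F))\bigr)=R\cap(R+\proj_p\mathbf s(F))$ is $(d-2)$-dimensional (this identity is what the layer complex of Section~\ref{sec:3} and Lemma~\ref{lem:5.6} deliver). On the other hand $\mathbf s(F)\in\Lambda_j$, so $R+\proj_p(\mathbf s(F))$ is a tile of the layer $T_j$, and since $\mathbf v'_j\perp\aff S_j$, i.e.\ $\mathbf v'_j\in\lin\aff S_{3-j}$, the only tiles of $T_j$ meeting $R$ in dimension $d-2$ are those with $\proj_p(\mathbf t)\in\lin\aff S_{3-j}$. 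This yields $\mathcal C^j_I(P)\subset\mathbf w_j+\lin\aff S_{3-j}$, which together with $\mathcal B_p(P)\subset\lin\aff S_1\cup\lin\aff S_2$ feeds Theorem~\ref{thm:ordine_reducibility_misc}. Note that the containment comes out \emph{opposite} to your guess: the $p^{\perp}$-component of $\mathbf s(F)$ for $F\parallel\ell_j$ lies in the summand indexed by $3-j$, not $j$. As a labelling matter this is harmless, but it shows the correspondence between the $\ell_j$-partition of the facets of $P$ and the $U_1,U_2$-partition of $p^{\perp}$ is not the obvious one and genuinely requires the projection identity above; your proposal identifies the right target but does not establish it.
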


\begin{proof}
Lemma~\ref{lem:9.1} asserts that $P$ has a perfect free plane. Therefore let $p$ be perfect for the rest of the proof.

We will use the notation of Lemma~\ref{lem:9.3}. We also assume that the image space of $\proj_p$ is 
$\left\langle \mathcal A_p(P) \cup \mathcal B_p(P) \right\rangle$.

If $P$ is a prism, then, obviously, the assertion of Lemma~\ref{lem:9.4} is true.
By Lemma~\ref{lem:9.3}, if $P$ is not a prism, then every facet vector of the parallelohedron $R = \proj_p(P)$ is orthogonal to at least
one of the two vectors $\mathbf v'_1$ and $\mathbf v'_2$. Thus $R$ is reducible, and Theorem~\ref{thm:cross_reduction} gives that
$$R = S_1 \oplus S_2, \quad \text{where} \quad \aff S_j \, \bot \, \mathbf v'_j, \quad j = 1,2.$$

Hence $\mathbf v'_1 \in \lin \aff S_2$ and $\mathbf v'_2 \in \lin \aff S_1$. Consequently,
if $\mathbf t \in \Lambda_1$ and $\dim \aff ((R+\mathbf t)\cap R) = d-2$ (respectively, $\mathbf t \in \Lambda_2$ and $\dim \aff ((R+\mathbf t)\cap R) = d-2$),
then 
$$\proj_p (\mathbf t) \in \lin \aff S_2 \quad \text{(respectively, $\proj_p (\mathbf t) \in \lin \aff S_1$)}.$$

But if $F$ is a facet of $P$ and $\mathbf s(F) \in \mathcal C_I(P)$, then
$$\proj_p \bigl(P\cap (P+\mathbf s(F))\bigr) = R \cap (R+\mathbf t),$$
where $\mathbf t$ denotes $\mathbf s(F)$. In particular, this gives
$$\dim \aff \bigl(R \cap (R+\mathbf t)\bigr) = d - 2.$$

As a result,
\begin{equation}\label{eq:9.3}
	\mathcal C^1_I(P) \subset \mathbf w_1 + \lin \aff S_2, \qquad \mathcal C^2_I(P) \subset \mathbf w_2 + \lin \aff S_1.
\end{equation}

Further, every vector of $\mathcal B_p(P)$ corresponds to a facet vector of $R$, so 
\begin{equation}\label{eq:9.4}
	\mathcal B_p(P) \in \lin \aff S_1 \cup \lin \aff S_2.
\end{equation}

Combining (\ref{eq:9.3}) and (\ref{eq:9.4}), we obtain that every facet vector of $P$ belongs to one of the two complementary spaces
$$\left\langle \mathbf w_1 \right\rangle \oplus \lin \aff S_2 \quad \text{and} \quad \left\langle \mathbf w_2 \right\rangle \oplus \lin \aff S_1.$$
By Theorem~\ref{thm:ordine_reducibility_misc}, $P$ is reducible.

\end{proof}

The proof of Lemma~\ref{lem:9.4} finishes the whole induction step.

\section*{Acknowledgements}
The author is supported by the Russian government project 11.G34.31.0053 and RFBR grant 11-01-00633. He would like to thank N.~Dolbilin for scientific
guidance, V.~Grishukhin for proposing the problem and A.~Garber for bringing the problem into the discussion.

A major part of the work has been done in Kingston, Canada during several visits to prof.~R.~Erdahl, who contributed a lot to the discussion of rank one
quadratic forms and dual cells. The author is also grateful to A.~Gavrilyuk, \'A.~Horv\'ath and A.~V\'egh for comments and suggestions.

\end{document}